\newtheorem{theorem}{Theorem}
\newtheorem{definition}[theorem]{Definition}
\newtheorem{proposition}[theorem]{Proposition}
\newtheorem{lemma}[theorem]{Lemma}
\newtheorem{corollary}[theorem]{Corollary}
\theoremstyle{remark}
\newtheorem{remark}[theorem]{Remark}
\newcommand{\R}{\mathbb{R}}
\definecolor{light-gray1}{gray}{0.90}
\definecolor{light-gray2}{gray}{0.80}
\definecolor{light-gray3}{gray}{0.60}
\numberwithin{equation}{section}
\numberwithin{theorem}{section}
\numberwithin{table}{section}
\numberwithin{figure}{section}
\title[On the dispersive estimates on a honeycomb lattice]{On the dispersive estimates for the discrete Schr\"odinger equation on a honeycomb lattice}
\date{\today}
\author[Y. Hong]{Younghun Hong}
\address{Department of Mathematics, Chung-Ang University, Seoul 06974, Korea}
\email{yhhong@cau.ac.kr}
\author[Y. Tadano]{Yukihide Tadano}
\address{Laboratory of Mathematical Science, Graduate School of Science, University of Hyogo, 
Hyogo 671-2201, Japan}
\email{tadano@sci.u-hyogo.ac.jp}
\author[C. Yang]{Changhun Yang}
\address{Department of Mathematics, Chungbuk National University, Cheongju 28644, Korea }
\email{chyang@chungbuk.ac.kr}
\begin{document}

\begin{abstract}
The discrete Schr\"odinger equation on a two-dimensional honeycomb lattice is a fundamental tight-binding approximation model that describes the propagation of waves on graphene. For free evolution, we first show that the degenerate frequencies of the dispersion relation are completely characterized by three symmetric periodic curves (Theorem \ref{thm: gradient and hessian}), and that the three curves meet at Dirac points where conical singularities appear (see Figure \ref{fig: first primitive cell}). Based on this observation, we prove the $L^1\to L^\infty$ dispersion estimates for the linear flow depending on the frequency localization (Theorem \ref{main thm: dispersion estimate}). Collecting all, we obtain the dispersion estimate with $O(|t|^{-2/3})$ decay as well as Strichartz estimates. As an application, we prove small data scattering for a nonlinear model (Theorem \ref{Thm:nonlinear application}). The proof of the key dispersion estimates is based on the associated oscillatory integral estimates with degenerate phases and conical singularities at Dirac points. Our proof is direct and uses only elementary methods.
\end{abstract}

\maketitle


\section{Introduction}

The discrete Schr\"odinger equation is a prototypical quantum lattice model that arises in various fields of theoretical and experimental physics. In condensed matter physics, the discrete Schr\"odinger operator is a model of tight-binding Hamiltonians of electrons in a crystalline solid. It also describes the effective dynamics of Bose-Einstein condensates trapped in a periodic optical lattice; by Bloch-Floquet theory, the discrete equation is derived from a formal tight-binding limit of the continuum equation with a periodic potential \cite[Section 1.2]{Kevrekidis Book}. In optics, discrete models are used to analyze wave propagation in optical waveguide arrays and photorefractive crystals. For more details on this topic, we refer to the book of Kevrekidis \cite{Kevrekidis Book}.

In particular, the two-dimensional honeycomb lattice case has attracted intense attention, because graphene, a single-layered sheet of carbon atoms located on the sites of a honeycomb lattice, is a fascinating material with many extraordinary properties; extremely high electron mobility \cite{Novoselov04}, room-temperature integer quantum Hall effect \cite{Novoselov07}, chiral tunneling \cite{Katsnelson}.
The above-mentioned papers and subsequent research demonstrate that graphene has enormous potential applications in various areas.

It is known that many of the features of graphene are due to the vicinity of the \textit{Dirac points}, where the first two lowest energy bands touch each other at conical singularities. The dynamics of wave-packets localized near the Dirac points is governed by the massless Dirac equation, which explains the unique electronic properties of graphene. In the field of mathematical studies, the dynamics of quantum particles on a graphene sheet is mainly described by the Schr\"odinger equation on $\mathbb{R}^2$ with a honeycomb periodic potential. In the seminal work of Fefferman and Weinstein \cite{FW1}, it is rigorously shown that the discrete Schr\"odinger operator can be derived from the tight-binding limit of the continuum Schr\"odinger operator with a honeycomb periodic potential. In \cite{FW2}, the same authors showed that in the vicinity of the Dirac points, wave-packets behave like those satisfying the massless Dirac equation. See also \cite{ArbSpa, FLW16, FLW1, FLW2, FW12, FW20, LWZ} for mathematical references and \cite{ AC24, BCFLR, CW, CA14, CZ15, FFW22} for those in applied mathematics and physics. In a similar context, the continuum Schr\"odinger equation is reduced to the discrete Schr\"odinger equation on a honeycomb lattice via the tight-binding approximation \cite{ACZ12}, and then the Dirac equation is derived taking the continuum limit \cite{{ANZ}}.

\subsection{Tight-binding model for graphene}
In this article, we are concerned with the fundamental tight-binding approximation model for graphene \cite{Wallace}. For its mathematical formulation, let 
$$\mathbf{\Lambda}=\mathbf{\Lambda}_\bullet:=\mathbb{Z}\mathbf{v}_1\oplus \mathbb{Z}\mathbf{v}_2$$
be the black-dotted lattice generated by the two linearly independent vectors
$$\mathbf{v}_1=\begin{bmatrix}\cos\frac{\pi}{6}\\ \sin\frac{\pi}{6}\end{bmatrix}=\begin{bmatrix}\frac{\sqrt{3}}{2}\\ \frac{1}{2}\end{bmatrix}\quad\textup{and}\quad \mathbf{v}_2=\begin{bmatrix}\cos(-\frac{\pi}{6})\\ \sin(-\frac{\pi}{6})\end{bmatrix}=\begin{bmatrix}\frac{\sqrt{3}}{2}\\ -\frac{1}{2}\end{bmatrix},$$
and for the unit vector $\mathbf{e}_1=[1 \ 0]^{\textup{T}}$, let 
$$\mathbf{\Lambda}_\circ:=\mathbf{\Lambda}_\bullet+\frac{1}{\sqrt{3}}\mathbf{e}_1,$$
denote the translated white-dotted lattice. Then, the union of the two lattices composes the honeycomb lattice
$$\mathbf{H}=\mathbf{\Lambda}_\bullet\cup \mathbf{\Lambda}_\circ$$
(see Figure \ref{fig: periodic structure of a hexagonal lattice} (A)). By construction, a scalar-valued function on the honeycomb lattice $\mathbf{H}$ can be realized as a $\mathbb{C}^2$-valued function on the \textit{periodic} sub-lattice $\mathbf{\Lambda}$ via the relation 
\begin{equation}\label{identification}
\mathbf{u}(\mathbf{x})=\begin{bmatrix}u_\bullet(\mathbf{x})\\ u_\circ(\mathbf{x})\end{bmatrix}=\begin{bmatrix}u(\mathbf{x})\\ u(\mathbf{x}+\frac{1}{\sqrt{3}}\mathbf{e}_1)\end{bmatrix},\quad \mathbf{x}\in\mathbf{\Lambda}.
\end{equation}
\begin{figure}
  \begin{center}
    \begin{subfigure}[t]{0.48\textwidth}
      \centering
      \includegraphics[width=0.9\textwidth]{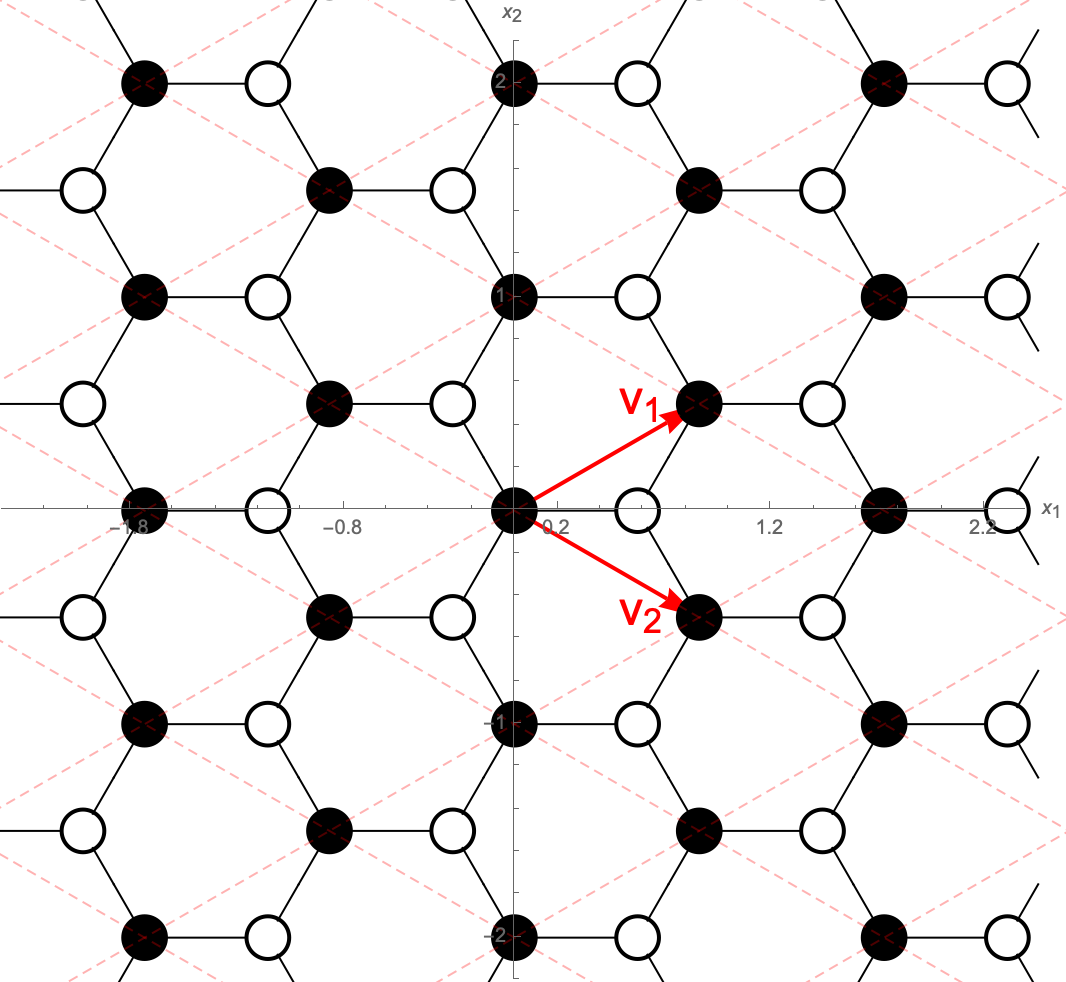}
      \caption{Hexagonal lattice}
      \label{fig:image-a}
    \end{subfigure}
    \quad
    \begin{subfigure}[t]{0.48\textwidth}
      \centering
      \includegraphics[width=0.8\textwidth]{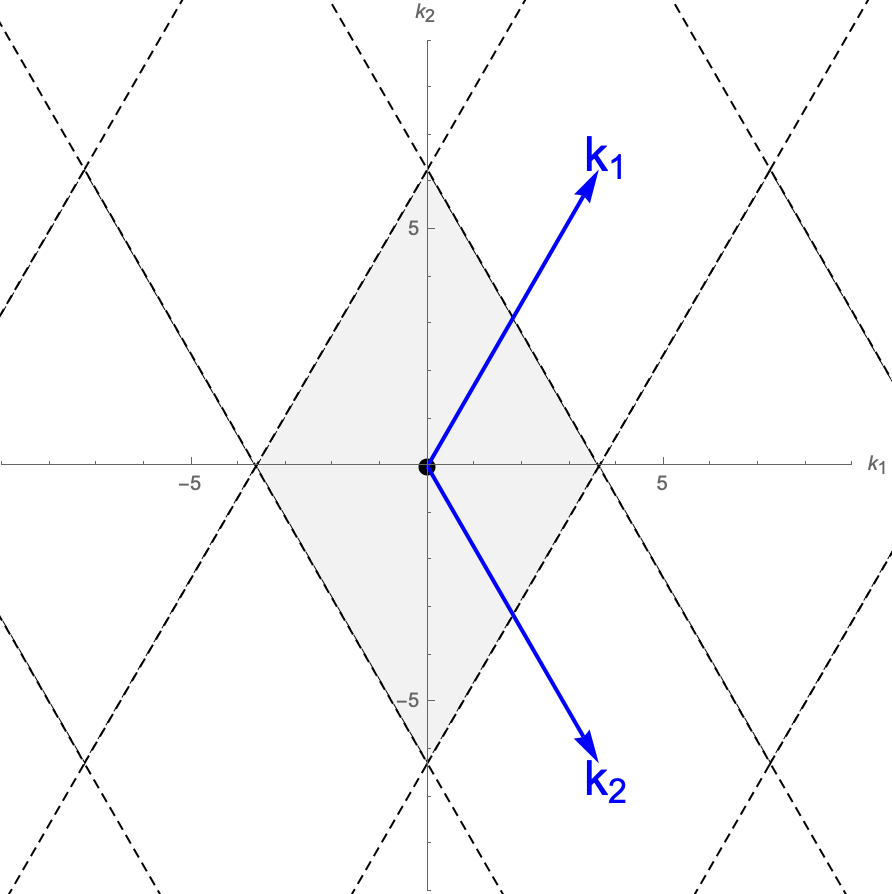}
      \caption{Frequency domain}
      \label{fig:image-b}
    \end{subfigure}
  \end{center}
  \caption{Periodic structure of the honeycomb lattice $\mathbf{H}$}
  \label{fig: periodic structure of a hexagonal lattice}
\end{figure}

For such a $\mathbb{C}^2$-valued function, its Lebesgue norm\footnote{The factor $\frac{\sqrt{3}}{2}$ represents the area of the primitive cell of $\mathbf{\Lambda}$.} is defined as
\begin{equation}\label{eq: discrete Lebesgue norm}
\|\mathbf{u}\|_{L_{\mathbf{x}}^r}=\|\mathbf{u}\|_{L_{\mathbf{x}}^r(\mathbf{\Lambda}; \mathbb{C}^2)}=\begin{cases} \Big( \frac{\sqrt{3}}{2}\sum_{\mathbf{x}\in \mathbf{\Lambda}} |\mathbf{u}(\mathbf{x})|^r\Big)^{\frac1r} \quad & \text{ if } 1\le r<\infty, \\ 
  \sup_{\mathbf{x}\in \mathbf{\Lambda}} |\mathbf{u}(\mathbf{x})| \quad &\text{ if } r=\infty,
\end{cases} 
\end{equation}
where $|\mathbf{u}|=\sqrt{|u_{\bullet}|^2+|u_\circ|^2}$. The Fourier transform on the honeycomb lattice $\mathbf{H}$ is defined based on the group structure of the sub-lattice $\mathbf{\Lambda}$ as follows. Let 
$$\mathbf{k}_1=\frac{4\pi}{\sqrt{3}}\begin{bmatrix}\cos\frac{\pi}{3}\\ \sin\frac{\pi}{3}\end{bmatrix}=\begin{bmatrix}\frac{2\pi}{\sqrt{3}}\\2\pi\end{bmatrix}\quad\textup{and}\quad \mathbf{k}_2=\frac{4\pi}{\sqrt{3}}\begin{bmatrix}\cos(-\frac{\pi}{3})\\ \sin(-\frac{\pi}{3})\end{bmatrix}=\begin{bmatrix}\frac{2\pi}{\sqrt{3}}\\-2\pi\end{bmatrix}$$
be the reciprocal lattice vectors such that $\mathbf{v}_i\cdot\mathbf{k}_{j}=2\pi\delta_{ij}$, and define the dual periodic lattice by $$\mathbf{\Lambda}^*:=\mathbb{Z}\mathbf{k}_1\oplus \mathbb{Z}\mathbf{k}_2.$$

\begin{definition}[Fourier transform on the sub-lattice $\mathbf{\Lambda}$]\label{definition: Fourier transform}
$(i)$ For a scalar-valued function $f: \mathbf{\Lambda}\to\mathbb{C}$, the Fourier transform is defined by 
$$\hat{f}(\mathbf{k})=\frac{\sqrt{3}}{2}\sum_{\mathbf{x}\in \mathbf{\Lambda}} f(\mathbf{x})e^{-i\mathbf{k} \cdot\mathbf{x}}: \mathbb{R}^2/\mathbf{\Lambda}^*\to\mathbb{C},$$
and the inverse Fourier transform of $g: \mathbb{R}^2/\mathbf{\Lambda}^*\to\mathbb{C}$ is defined by 
$$\check{g}(\mathbf{x})=\frac{1}{(2\pi)^2}\int_{\mathbb{R}^2/\mathbf{\Lambda}^*}g(\mathbf{k})e^{i\mathbf{k} \cdot\mathbf{x}}d\mathbf{k}: \mathbf{\Lambda}\to\mathbb{C}.$$
$(ii)$ For a $\mathbb{C}^2$-valued function $\mathbf{f}=(f_\bullet, f_\circ): \mathbf{\Lambda}\to\mathbb{C}^2$ (resp., $\mathbf{g}=(g_\bullet, g_\circ): \mathbb{R}^2/\mathbf{\Lambda}^*\to\mathbb{C}^2$), the Fourier transform (resp., the inverse Fourier transform) is defined by 
$$\hat{\mathbf{f}}=(\hat{f}_\bullet, \hat{f}_\circ)\quad\Big(\textup{resp., }\check{\mathbf{g}}=(\check{g}_\bullet, \check{g}_\circ)\Big).$$
\end{definition}

\begin{remark}\label{remark: Fourier transform definition}
$(i)$ The frequency domain $\mathbb{R}^2/\mathbf{\Lambda}^*$ can be identified with the primitive \textit{rhombic cell}, the gray region in Figure \ref{fig: periodic structure of a hexagonal lattice} (B) with the periodic boundary condition\footnote{In some context, $\mathbb{R}^2/\mathbf{\Lambda}^*$ is identified with the primitive hexagonal cell whose vertices are Dirac points, but the rhombic cell is more convenient to use in our analysis.}   
\begin{equation}\label{rhombic cell}
\mathcal{B}_{\textup{rhom}}=\bigg\{s_1\mathbf{k}_1+s_2\mathbf{k}_2: -\frac{1}{2}< s_1,s_2\leq\frac{1}{2}\bigg\}.
\end{equation}
$(ii)$ In Definition \ref{definition: Fourier transform}, with a slight abuse of notation, we use the same symbols $\hat{\ }$ and $\check{\ }$ for both scalar-valued and $\mathbb{C}^2$-valued functions, but we distinguish them expressing $\mathbb{C}^2$-valued functions in bold, e.g. $\hat{\mathbf{f}}$ and $\check{\mathbf{g}}$.
\end{remark}

On the honeycomb lattice $\mathbf{H}$, the dynamics of free waves is governed by the Schr\"odinger equation
$$i\partial_t u=-\Delta_{\mathbf{H}} u,$$
where $u:\mathbb{R}\times\mathbf{H}\to\mathbb{C}$ and $\Delta_{\mathbf{H}}$ denotes the standard discrete Laplacian for nearest neighbors\footnote{For $\mathbf{x}\in \mathbf{\Lambda}_\bullet$, $(\Delta_{\mathbf{H}}u)(\mathbf{x})=4\{u(\mathbf{x}+\frac{1}{\sqrt{3}}\mathbf{e}_1)+u(\mathbf{x}+\frac{1}{\sqrt{3}}\mathbf{e}_1-\mathbf{v}_1)+u(\mathbf{x}+\frac{1}{\sqrt{3}}\mathbf{e}_1-\mathbf{v}_2)-3u(\mathbf{x})\}$; for $\mathbf{x}\in \mathbf{\Lambda}_\circ$, $(\Delta_{\mathbf{H}}u)(\mathbf{x})=4\{u(\mathbf{x}-\frac{1}{\sqrt{3}}\mathbf{e}_1)+u(\mathbf{x}-\frac{1}{\sqrt{3}}\mathbf{e}_1+\mathbf{v}_1)+u(\mathbf{x}-\frac{1}{\sqrt{3}}\mathbf{e}_1+\mathbf{v}_2)-3u(\mathbf{x})\}$.}. By \eqref{identification}, this scalar-valued equation can be identified with the $\mathbb{C}^2$-valued linear Schr\"odinger equation on the reduced periodic lattice $\mathbf{\Lambda}$, that is, 
\begin{equation}\label{LS}
i\partial_t\mathbf{u}=-\mathbf{\Delta}\mathbf{u},
\end{equation}
where
$$\mathbf{u}=\mathbf{u}(t,\mathbf{x} ):\mathbb{R}\times\mathbf{\Lambda}\to\mathbb{C}^2$$
and $\mathbf{\Delta}$ is the vector-valued discrete Laplacian given by
\begin{equation}\label{eq: definition of Laplacian on honeycomb}
(\mathbf{\Delta}\mathbf{u})(\mathbf{x})=4\begin{bmatrix}u_\circ(\mathbf{x})+u_\circ(\mathbf{x} -\mathbf{v}_1)+u_\circ(\mathbf{x} -\mathbf{v}_2)-3u_\bullet(\mathbf{x})\\ u_\bullet(\mathbf{x})+u_\bullet(\mathbf{x}+\mathbf{v}_1)+u_\bullet(\mathbf{x}+\mathbf{v}_2)-3u_\circ(\mathbf{x})\end{bmatrix},\quad \mathbf{u}=\begin{bmatrix}u_\bullet\\u_\circ\end{bmatrix}: \mathbf{\Lambda}\to\mathbb{C}^2.
\end{equation}
Then, the solution to the equation \eqref{LS} with initial data $\mathbf{u}_0$ can be expressed as
\begin{equation}\label{eq: flow factorization}
\boxed{\quad e^{it\mathbf{\Delta}}\mathbf{u}_0=e^{-12it}\mathbf{O}(i\nabla_{\mathbf{x}})\begin{bmatrix} e^{-4it\varphi(i\nabla_{\mathbf{x}})}&0\\
0&e^{4it\varphi(i\nabla_{\mathbf{x}})}\end{bmatrix}\mathbf{O}(i\nabla_{\mathbf{x}})^*\mathbf{u}_0,\quad}
\end{equation}
where $\varphi(i\nabla_{\mathbf{x}})$ is the scalar-valued Fourier multiplier with symbol
$$\varphi(\mathbf{k})=|z(\mathbf{k})|=\sqrt{3+2\cos(\mathbf{k}\cdot\mathbf{v}_1) +2\cos(\mathbf{k}\cdot\mathbf{v}_2) +2\cos(\mathbf{k}\cdot(\mathbf{v}_1-\mathbf{v}_2))}$$
with 
$$z(\mathbf{k})=1+e^{i\mathbf{k}\cdot\mathbf{v}_1}+e^{i\mathbf{k}\cdot\mathbf{v}_2}$$
and $\mathbf{O}(i\nabla_{\mathbf{x}})$ is the vector-valued Fourier multiplier with symbol with 
$$\mathbf{O}(\mathbf{k})=\begin{bmatrix}\ \frac{1}{\sqrt{2}}&\frac{\overline{z(\mathbf{k})}}{\sqrt{2}|z(\mathbf{k})|}\\
-\frac{z(\mathbf{k})}{\sqrt{2}|z(\mathbf{k})|}&\frac{1}{\sqrt{2}}\end{bmatrix}$$
(see Appendix \ref{sec: kernel} for the derivation of the formula \eqref{eq: flow factorization}).

\begin{remark}\label{remark: O operator}
By Parseval's theorem, $\mathbf{O}(i\nabla_{\mathbf{x}})$ is unitary on $L^2(\mathbf{\Lambda}; \mathbb{C}^2)$. Moreover, it is bounded on $L^p(\mathbf{\Lambda}; \mathbb{C}^2)$ with $1<p<\infty$ (see Lemma \ref{lem:boundedness of O}).
\end{remark}

By the representation \eqref{eq: flow factorization}, the dynamics of the Schr\"odinger flow $e^{it\mathbf{\Delta}}\mathbf{u}_0$ is completely determined by the geometric stucture of the frequency surfaces 
\begin{equation}\label{eq: frequency surfaces}
\mathcal{S}_\pm=\Big\{\big(\mathbf{k},\pm \varphi(\mathbf{k})\big):\mathbf{k}\in \mathbb{R}^2/\mathbf{\Lambda}^*\Big\}.
\end{equation}
Note that the two surfaces meet, or $\varphi(\mathbf{k})=0$, called \textit{Dirac points}. In fact, the Dirac points are the two points $[0 \ \pm\frac{4\pi}{3}]^{\textup{T}}+\mathbf{\Lambda}^*$ in the primitive rhombic zone $\mathcal{B}_{\textup{rhom}}$ (see \eqref{rhombic cell} and Figure \ref{fig: first primitive cell}). It is well known that the frequency surfaces $\mathcal{S}_\pm$ have a conical singularity at a Dirac point $\mathbf{K}_\star$. By Fefferman, Lee-Thorp and Weinstein \cite{FLW2}, it is shown that in the tight-binding regime, the first two lowest energy bands for the continuum model converge to $\pm\varphi(\mathbf{k})$ uniformly in $\mathbf{k}$.

\section{Main results}

The purpose of this article is to investigate the detailed dispersive properties of wave propagation in a honeycomb lattice, depending on frequency localization. In a cubic lattice, $L^1\to L^\infty$ dispersion bounds have been established for discrete Schr\"odinger, Klein-Gordon and wave equations; \cite{EKT, MZ, PS} for a one-dimensional lattice, and \cite{BCH2312, BCH, BG, HY2019,  CI21, Sch, SK05} for multi-dimensional lattices. Such time decay estimates are a fundamental tool for studying discrete dispersive equations in various aspects. For example, they have been used to nonlinear problems, including the continuum limit of discrete models \cite{CH2501, CA2023, HY2019-2, HKY2021, HKNY2021, HKY2023, IZ-CR05-01, IZ-CR05-02, IZ-SIAM09, IZ-JMPA12, KOVW2023,  Wang2501}. We also note that in a similar context, uniform resolvent estimates of discrete Schr\"odinger operators on a cubic lattice are proved \cite{CS, TT, Taira20, Taira21}, but the spectral and scattering properties of discrete Schr\"odinger operators on honeycomb (and general) lattices are also considered in \cite{Ando13,AnIsMo,PaRi,Ta}. However, to the best of the authors' knowledge, the dispersion estimate on a honeycomb lattice has not yet been known, despite its physical importance. 

\subsection{Characterization of degenerate frequencies}
By the representation \eqref{eq: flow factorization}, the time-decay of the Schr\"odinger flow $e^{it\mathbf{\Delta}}\mathbf{u}_0$ is determined by
the phase function
\begin{equation}\label{eq: phase function}
\boxed{\quad
\varphi(\mathbf{k})=\sqrt{3+2\cos(\mathbf{k}\cdot\mathbf{v}_1) +2\cos(\mathbf{k}\cdot\mathbf{v}_2) +2\cos(\mathbf{k}\cdot(\mathbf{v}_1-\mathbf{v}_2))}\quad}
\end{equation}
where $\mathbf{v}_1=(\frac{\sqrt{3}}{2},\frac{1}{2})$ and $\mathbf{v}_2=(\frac{\sqrt{3}}{2},-\frac{1}{2})$. In order to find the precise dispersion rate, a crucial step is to compute the local series expansions of the phase.

Our first main result provides the explicit formulae for the Hessian of the phase $\varphi(\mathbf{k})$ and its determinant, given by the three periodic functions 
\begin{equation}\label{eq: alpha function notations}
\left\{\begin{aligned}
\alpha_{1}(\mathbf{k})&:=1+\cos(\mathbf{k}\cdot\mathbf{v}_2)+\cos(\mathbf{k}\cdot(\mathbf{v}_1-\mathbf{v}_2)),\\
\alpha_{2}(\mathbf{k})&:=1+\cos(\mathbf{k}\cdot\mathbf{v}_1)+\cos(\mathbf{k}\cdot(\mathbf{v}_1-\mathbf{v}_2)),\\
\alpha_{12}(\mathbf{k})&:=1+\cos(\mathbf{k}\cdot\mathbf{v}_1)+\cos(\mathbf{k}\cdot\mathbf{v}_2),
\end{aligned}\right.
\end{equation}
from which degenerate frequencies are completely characterized by the three periodic curves. 

\begin{theorem}[Characterization of degenerate frequencies]\label{thm: gradient and hessian}
For any frequency $\mathbf{k}\in \mathbb{R}^2/\mathbf{\Lambda}^*$ such that $\varphi(\mathbf{k})\neq0$, i.e, $\mathbf{k}$ is not a Dirac point, we have
$$(\nabla^2\varphi)(\mathbf{k})=\frac{1}{\varphi(\mathbf{k})^3}\mathbf{V}\begin{bmatrix} 
-\alpha_{2}(\mathbf{k})(\alpha_1(\mathbf{k})+\alpha_{12}(\mathbf{k})) & \alpha_{1}(\mathbf{k})\alpha_{2}(\mathbf{k}) \\
\alpha_{1}(\mathbf{k})\alpha_{2}(\mathbf{k}) & -\alpha_{1}(\mathbf{k})(\alpha_2(\mathbf{k})+\alpha_{12}(\mathbf{k}))
\end{bmatrix}\mathbf{V}^\textup{T},$$
where $\mathbf{V}=[\mathbf{v}_1\ \mathbf{v}_2]$, and
\begin{equation}\label{eq: determinant of the Hessian}
\textup{det}(\nabla^2\varphi)(\mathbf{k})=\frac{3\alpha_{1}(\mathbf{k})\alpha_{2}(\mathbf{k})\alpha_{12}(\mathbf{k})}{4\varphi(\mathbf{k})^4}.
\end{equation}
As a consequence, $\textup{det}(\nabla^2\varphi)(\mathbf{k})=0$ if and only if
$$\alpha_{1}(\mathbf{k})=0,\quad \alpha_{2}(\mathbf{k})=0\quad \textup{or}\quad \alpha_{12}(\mathbf{k})=0.$$
\end{theorem}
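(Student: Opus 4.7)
The natural starting point is to avoid the square root in $\varphi$ by differentiating the identity
\[
\varphi(\mathbf{k})^2 = 3+2\cos\theta_1+2\cos\theta_2+2\cos(\theta_1-\theta_2),\qquad \theta_j:=\mathbf{k}\cdot\mathbf{v}_j,
\]
which is a trigonometric polynomial in $\theta_1,\theta_2$. The plan is: compute $\nabla(\varphi^2)=2\varphi\nabla\varphi$ and $\nabla^2(\varphi^2)=2\nabla\varphi(\nabla\varphi)^{\mathrm T}+2\varphi\nabla^2\varphi$ by direct differentiation, solve for $\nabla^2\varphi$ to obtain
\[
\varphi^3\nabla^2\varphi \;=\;\tfrac{\varphi^2}{2}\,\nabla^2(\varphi^2)\;-\;(\varphi\nabla\varphi)(\varphi\nabla\varphi)^{\mathrm T},
\]
and then identify each side entry-by-entry. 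Since the assumption $\varphi(\mathbf{k})\neq 0$ makes the division harmless, this is the cleanest route.

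For the explicit computation, observe that every $\theta_j$-derivative of $\cos\theta_j$ or $\cos(\theta_1-\theta_2)$ is again a sine or cosine, and the gradient $\nabla\theta_j=\mathbf{v}_j$ produces only $\mathbf{v}_1,\mathbf{v}_2,\mathbf{v}_1-\mathbf{v}_2$. So both $\nabla^2(\varphi^2)$ and $\varphi\nabla\varphi$ are naturally packaged in the basis $\{\mathbf{v}_1,\mathbf{v}_2\}$: if $\mathbf{V}=[\mathbf{v}_1\ \mathbf{v}_2]$, then $\mathbf{v}_i\mathbf{v}_j^{\mathrm T}=\mathbf{V}E_{ij}\mathbf{V}^{\mathrm T}$ for the standard matrix units, and the identity $(\mathbf{v}_1-\mathbf{v}_2)(\mathbf{v}_1-\mathbf{v}_2)^{\mathrm T}=\mathbf{V}\bigl[\begin{smallmatrix}1 & -1\\ -1 & 1\end{smallmatrix}\bigr]\mathbf{V}^{\mathrm T}$ lets us sandwich everything between $\mathbf{V}$ and $\mathbf{V}^{\mathrm T}$, which is exactly the form on the right-hand side of the claim.

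The algebraic heart of the proof is then the matrix identity obtained by stripping $\mathbf{V}(\cdot)\mathbf{V}^{\mathrm T}$ from both sides: writing $A=\cos\theta_1$, $B=\cos\theta_2$, $C=\cos(\theta_1-\theta_2)$ and the analogous sines, one must show (for the $(1,1)$-entry)
\[
-\varphi^2(A+C)-(\sin\theta_1+\sin(\theta_1-\theta_2))^2\;=\;-\alpha_2(\alpha_1+\alpha_{12}),
\]
with the $(2,2)$-entry symmetric and an analogous identity for the $(1,2)$-entry. Expanding everything and cancelling, all of these reduce to the single trigonometric identity
\[
\cos\theta_1\cos(\theta_1-\theta_2)+\sin\theta_1\sin(\theta_1-\theta_2)=\cos\theta_2,
\]
and its cyclic analogues, so this step is short once set up cleanly. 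This is the one place where care is needed, and I would expect it to be the only genuine obstacle in the calculation.

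The determinant formula \eqref{eq: determinant of the Hessian} is then a short by-product: $\det(\mathbf{V})=-\tfrac{\sqrt{3}}{2}$ contributes the factor $3/4$, and the $2{\times}2$ determinant of the matrix in the statement factors as
\[
\alpha_1\alpha_2\bigl[(\alpha_1+\alpha_{12})(\alpha_2+\alpha_{12})-\alpha_1\alpha_2\bigr]=\alpha_1\alpha_2\alpha_{12}(\alpha_1+\alpha_2+\alpha_{12}),
\]
together with the clean observation $\alpha_1+\alpha_2+\alpha_{12}=3+2\cos\theta_1+2\cos\theta_2+2\cos(\theta_1-\theta_2)=\varphi(\mathbf{k})^2$, which cancels two powers of $\varphi$. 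The final ``consequence'' is then immediate since $\varphi(\mathbf{k})\neq 0$ forces $\det(\nabla^2\varphi)(\mathbf{k})=0$ to be equivalent to the vanishing of at least one of $\alpha_1,\alpha_2,\alpha_{12}$.
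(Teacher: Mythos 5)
Your proposal is correct and follows essentially the same route as the paper: the paper reduces to the variables $\tilde{\mathbf{k}}=\mathbf{V}^{\mathrm T}\mathbf{k}$ and applies the quotient rule to $\tilde\varphi$ entry-by-entry, which produces exactly the numerator $\varphi^2(\cos\theta_1+\cos(\theta_1-\theta_2))+(\sin\theta_1+\sin(\theta_1-\theta_2))^2$ that you obtain via the identity $\varphi^3\nabla^2\varphi=\tfrac{\varphi^2}{2}\nabla^2(\varphi^2)-(\varphi\nabla\varphi)(\varphi\nabla\varphi)^{\mathrm T}$, and both then factor it using $\sin\theta_1\sin(\theta_1-\theta_2)=\cos\theta_2-\cos\theta_1\cos(\theta_1-\theta_2)$. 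The determinant step, including the factorization $\alpha_1\alpha_2\alpha_{12}(\alpha_1+\alpha_2+\alpha_{12})$ and $\alpha_1+\alpha_2+\alpha_{12}=\varphi^2$, matches the paper's.
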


\begin{figure}
    \centering
    \includegraphics[width=0.6\linewidth]{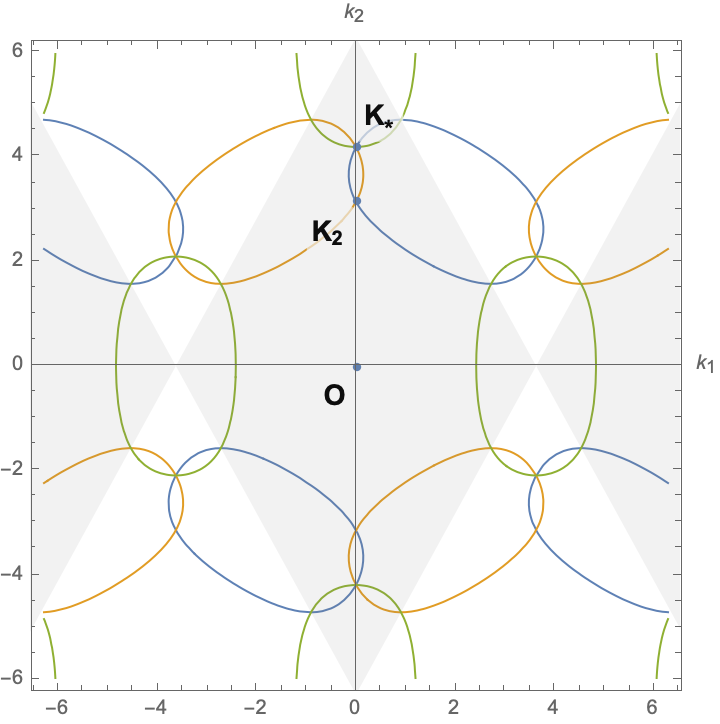}
    \caption{Degenerate frequency curves.}
    \label{fig: first primitive cell}
\end{figure}

\begin{remark}
$(i)$ Theorem \ref{thm: gradient and hessian} provides more detailed geometric information of the well-known frequency surfaces $\mathcal{S}_\pm$ for the tight-binding model of graphene. To the best of authors' knowledge, this is the first result for the classification of degenerate frequencies of the phase $\varphi(\mathbf{k})$.\\
$(ii)$ Notably, the determinant of the Hessian \eqref{eq: determinant of the Hessian} is factorizable. From Figure \ref{fig: first primitive cell}, we observe that the degenerate frequencies are located on the three simple periodic curves $\alpha_{1}(\mathbf{k})=0$, $\alpha_{2}(\mathbf{k})=0$ and $\alpha_{12}(\mathbf{k})=0$ with symmetry; the three degenerate frequency curves are symmetric under 60 degree rotation. Moreover, all three curves intersect only at Dirac points. These facts are completely non-trivial, and relies heavily on the symmetric algebraic structure of the graphene lattice model. Indeed, it is not easy to expect factorization of $\textup{det}(\nabla^2\varphi)(\mathbf{k})$ a priori-ly, because the direct expansion of $\textup{det}(\nabla^2\varphi)(\mathbf{k})$ has many rational functions of trigonometric functions, and it is too difficult to reorganize them using the trigonometric identities by hands. The ``magical" formula \eqref{eq: determinant of the Hessian} was obtained unexpectedly by \textit{Mathematica-aided} computations. Note also that for non-symmetric hexagonal latice models, for example, that for boron nitrides \cite{Ran}, the phase function does not have a similar factorization property.\\
$(iii)$ The notation \eqref{eq: alpha function notations} has symmetry in that $\alpha_j(\mathbf{k})=\frac{\varphi(\mathbf{k})^2-1}{2}-\cos(\mathbf{k}\cdot\mathbf{v}_j)$ for $j=1,2$, and $\alpha_{12}(\mathbf{k})=\frac{\varphi(\mathbf{k})^2-1}{2}-\cos(\mathbf{k}\cdot(\mathbf{v}_1-\mathbf{v}_2))$.
\end{remark}

By Theorem \ref{thm: gradient and hessian}, the periodic frequency domain is decomposed as 
$$\mathbb{R}^2/\mathbf{\Lambda}^*=\bigsqcup_{j=0}^3 \mathcal{K}_j,$$
where $\mathcal{K}_j$ denotes the set of intersections of $j$-many frequency curves, precisely,  
\begin{equation}\label{eq: decomposition of the frequency domain}
\begin{aligned}
\mathcal{K}_0&=\Big\{\mathbf{k}\in(\mathbb{R}^2/\mathbf{\Lambda}^*)\setminus \mathcal{K}_3: \textup{det}(\nabla^2\varphi)(\mathbf{k})\neq0\Big\},\\
\mathcal{K}_1&=\Big\{\mathbf{k}\in\mathbb{R}^2/\mathbf{\Lambda}^*: \textup{det}(\nabla^2\varphi)(\mathbf{k})=0\Big\}\setminus (\mathcal{K}_2\cup \mathcal{K}_3),\\
\mathcal{K}_2&=\bigg\{\bigg(\frac{\sqrt{3}\pi}{2},\pm\frac{\pi}{2}\bigg), \bigg(\frac{\sqrt{3}\pi}{6},\pm\frac{3\pi}{2}\bigg), (0,\pm\pi)\bigg\}+\mathbf{\Lambda}^*,\\
\mathcal{K}_3&=\bigg(0,\pm\frac{4\pi}{3}\bigg)+\mathbf{\Lambda}^*\quad\textup{(Dirac points)}.
\end{aligned}
\end{equation}

\subsection{Dispersion and Strichartz estimates}

Based on the characterization of degenerate frequencies (Theorem \ref{thm: gradient and hessian}), we establish our second main result, that is, the dispersion estimates for the Schr\"odinger flow on the honeycomb lattice $\mathbf{H}$, whose decay rate depends on frequency localization.

\begin{theorem}[Frequency localized dispersion estimates]\label{main thm: dispersion estimate}
For each $\mathbf{K}\in \mathcal{K}_j$ with $j=0,1,2,3$, there exist $C_j=C_j(\mathbf{K})>0$ and a smooth cut-off $\chi_{\approx \mathbf{K}}$ such that $\chi_{\approx \mathbf{K}}$ is supported in a sufficiently small neighborhood of $\mathbf{K}$, $\chi_{\approx \mathbf{K}}=1$ near $\mathbf{K}$, and 
\begin{equation}\label{eq: dispersion estimate}
\|\mathbf{O}(i\nabla_{\mathbf{x}})^*e^{it\mathbf{\Delta}}\mathbf{O}(i\nabla_{\mathbf{x}})P_{\approx\mathbf{K}}\mathbf{u}_0\|_{L_{\mathbf{x}}^\infty(\mathbf{\Lambda}; \mathbb{C}^2)}\leq\frac{C_j}{(1+|t|)^{a_j}}\|\mathbf{u}_0\|_{L_{\mathbf{x}}^1(\mathbf{\Lambda}; \mathbb{C}^2)},
\end{equation}
where $\mathbf{O}(i\nabla_{\mathbf{x}})$ is given by \eqref{eq: flow factorization}, $P_{\approx\mathbf{K}}$ is the Fourier multiplier with symbol $\chi_{\approx \mathbf{K}}$, that is, $\widehat{P_{\approx\mathbf{K}}u}(\mathbf{k})=\chi_{\approx \mathbf{K}}(\mathbf{k})\hat{u}(\mathbf{k})$, and 
\begin{equation}\label{eq: dispersion estimate rate}
a_j=\left\{\begin{aligned}
&1 &&\textup{if }j=0,\\
&\tfrac{5}{6}&&\textup{if }j=1,\\
&\tfrac{2}{3}&&\textup{if }j=2,\\
&\tfrac{5}{6}&&\textup{if }j=3.
\end{aligned}\right.
\end{equation}
\end{theorem}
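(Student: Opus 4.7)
The plan is to decouple the matrix flow into a scalar oscillatory integral and then run a case-by-case stationary-phase/van der Corput analysis driven by Theorem \ref{thm: gradient and hessian}. Since $\mathbf{O}(\mathbf{k})$ is pointwise unitary, the factorization \eqref{eq: flow factorization} gives
\begin{equation*}
\mathbf{O}(i\nabla_{\mathbf{x}})^* e^{it\mathbf{\Delta}}\mathbf{O}(i\nabla_{\mathbf{x}}) P_{\approx\mathbf{K}} = e^{-12it}\begin{bmatrix} e^{-4it\varphi(i\nabla_{\mathbf{x}})} & 0 \\ 0 & e^{4it\varphi(i\nabla_{\mathbf{x}})}\end{bmatrix} P_{\approx\mathbf{K}},
\end{equation*}
so \eqref{eq: dispersion estimate} reduces to a uniform-in-$\mathbf{x}$ bound on the scalar oscillatory kernel
\begin{equation*}
K_\pm(t,\mathbf{x}) = \int_{\mathbb{R}^2/\mathbf{\Lambda}^*} \chi_{\approx\mathbf{K}}(\mathbf{k})\, e^{i(\pm 4t\varphi(\mathbf{k}) + \mathbf{k}\cdot\mathbf{x})}\, d\mathbf{k}.
\end{equation*}
The cutoff $\chi_{\approx\mathbf{K}}$ is chosen so that only one type of degeneracy of the total phase $t\varphi+\mathbf{k}\cdot\mathbf{x}$ is present on its support.

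For the smooth strata, the Taylor expansion of $\varphi$ at $\mathbf{K}$ dictates the tool. When $\mathbf{K}\in\mathcal{K}_0$, $\nabla^2\varphi(\mathbf{K})$ is invertible, and two-dimensional stationary phase (together with integration by parts off the critical set) yields $|t|^{-1}$. When $\mathbf{K}\in\mathcal{K}_1$, exactly one of $\alpha_1,\alpha_2,\alpha_{12}$ vanishes at $\mathbf{K}$, so $\nabla^2\varphi(\mathbf{K})$ has rank one; in coordinates aligned with the single degenerate curve through $\mathbf{K}$, one-dimensional stationary phase in the non-degenerate variable contributes $|t|^{-1/2}$, and van der Corput in the degenerate variable — whose third derivative is checked to be non-zero along the curve $\alpha_j=0$ away from $\mathcal{K}_2\cup\mathcal{K}_3$ by a direct computation from the explicit form of $\varphi$ — contributes a further $|t|^{-1/3}$, giving $|t|^{-5/6}$. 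When $\mathbf{K}\in\mathcal{K}_2$, two of the factors vanish and $\nabla^2\varphi(\mathbf{K})=0$ by \eqref{eq: determinant of the Hessian}; we expand $\varphi$ to cubic order at each of the six isolated points and apply a Newton-polytope-type van der Corput argument in two coordinates to produce $|t|^{-2/3}$.

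The Dirac-point case $j=3$ is the delicate one. Here $\varphi$ is only H\"older $\tfrac{1}{2}$ at $\mathbf{K}_\star$ and all three $\mathcal{K}_1$-curves meet there, so the smooth-case arguments break down. Expanding $z(\mathbf{K}_\star+\mathbf{q})$ to quadratic order and passing to polar coordinates $\mathbf{q}=r(\cos\phi,\sin\phi)$ yields the conical expansion
\begin{equation*}
\varphi(\mathbf{K}_\star+\mathbf{q}) = \tfrac{\sqrt{3}}{2}\,r + \tfrac{1}{8}\,r^2\sin(3\phi) + O(r^3),
\end{equation*}
in which the three-fold symmetry of the honeycomb is manifest. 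The strategy is a dyadic decomposition into shells $|\mathbf{q}|\sim 2^{-\ell}$; on each shell the rescaling $\mathbf{q}=2^{-\ell}\tilde{\mathbf{q}}$ makes the leading conical phase smooth on the annulus $|\tilde{\mathbf{q}}|\sim 1$, reducing the situation to a rank-one stationary phase (the non-degenerate angular direction gives $|t\cdot 2^{-\ell}|^{-1/2}$) sharpened by van der Corput along the radial direction, where the cubic correction $\tfrac{1}{8}r^2\sin(3\phi)$ supplies the missing third derivative and gives a further $|t\cdot 2^{-\ell}|^{-1/3}$. Each shell then contributes $2^{-2\ell}(t\cdot 2^{-\ell})^{-5/6}=t^{-5/6}2^{-7\ell/6}$; summing in $\ell\ge 0$, together with a trivial volume bound for shells with $2^{-\ell}<t^{-1}$, produces the claimed $|t|^{-5/6}$.

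\emph{Main obstacle.} The Dirac-point analysis is the technical heart of the proof: the non-smoothness of the symbol at $\mathbf{K}_\star$, the blow-up of $\nabla^2\varphi$ like $|\mathbf{q}|^{-1}$ along the three $\mathcal{K}_1$-curves accumulating at $\mathbf{K}_\star$, and the need for uniformity in $\mathbf{x}$ prevent a direct application of either the smooth-case arguments or of standard wave-kernel estimates. The key step is to verify that on every dyadic shell the cubic correction captured in $r^2\sin(3\phi)$ yields the missing $|t|^{-1/3}$ factor — in particular at the three exceptional angular directions $\phi\in\frac{\pi}{3}\mathbb{Z}$ where $\sin(3\phi)$ degenerates, so that the shell contributions interlock to produce exactly the exponent $5/6$ rather than the weaker $1/2$ one would read off from the pure conical leading term.
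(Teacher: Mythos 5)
Your high-level blueprint matches the paper's: reduce via the pointwise unitarity of $\mathbf{O}(\mathbf{k})$ to a scalar oscillatory integral, then treat $j=0,1,2,3$ separately using Theorem~\ref{thm: gradient and hessian} to classify the degeneracy. For $j=0$ both you and the paper invoke standard non-degenerate stationary phase. For $j=1$ your plan — 1D stationary phase transversally, van der Corput order $3$ along the single degenerate curve, with the nonvanishing of $\partial^3\varphi$ there checked from the explicit form — is essentially what the paper does, though the paper carries this out by constructing the stationary curve $k_1=\tilde\gamma(k_2)$ explicitly and cutting off at $|k_1|\lesssim |t|^{-1/2}$ rather than literally invoking 1D stationary phase; the nonvanishing third derivative is its Lemma~\ref{lemma: asymptotic of phase (K1 case)}, proved by a short contradiction argument showing $\partial_{\tilde k_2}\tilde\alpha_1(\tilde{\mathbf K})\neq 0$. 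For $j=2$ you wave at a Newton-polytope argument; the paper explicitly notes Varchenko $+$ Karpushkin would suffice (Remark after Proposition~\ref{prop: two curve intersection}) but opts for a direct dyadic decomposition with a change of variables $k_1\mapsto k_1\mp k_2$ that kills the $k_2^3$ term. Your Dirac-point strategy (dyadic shells in $|\mathbf{q}|\sim 2^{-\ell}$, rescale to an annulus, angular stationary phase $\times$ radial van der Corput) is a genuinely different route from the paper's, which instead performs the change of variables $(k_1,k_2)\mapsto(k_1,k_2/k_1)$ to desingularize the cone, constructs a stationary curve in the blown-up variables, and splits at a distance $|t|^{-c}$ from it.

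However, there is a real gap in your Dirac-point case. You assert that the correction $\tfrac18 r^2\sin(3\phi)$ in the expansion
$\varphi(\mathbf{K}_\star+\mathbf{q})=\tfrac{\sqrt3}{2}r+\tfrac18 r^2\sin(3\phi)+O(r^3)$
``supplies the missing third derivative.'' It does not: $\partial_r^3\big(r^2\sin(3\phi)\big)=0$, and worse, at the exceptional directions $\phi\in\tfrac{\pi}{3}\mathbb{Z}$ (which are precisely the ones responsible for the slow $|t|^{-5/6}$ rate) this term vanishes identically, contributing neither a second nor a third radial derivative. The third-derivative gain must come from the $O(r^3)$ remainder in the expansion, and you never verify that it is nonzero there. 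This is not automatic; in the paper it is the explicit term $-\tfrac{3}{64}|\mathbf{k}|^4$ in Lemma~\ref{lemma: phase function asymptotic near the Dirac point}, which after the blow-up gives $|\partial_{k_1}^3\tilde{\tilde\varphi}|=|{-3\sqrt3(\tfrac18-a_\theta^2)}+\mathcal{O}_1(\mathbf{k})|\sim 1$ precisely because $a_\theta^2\le\tfrac{1}{12}<\tfrac18$ (see \eqref{eq: double tilde varphi 3rd derivative}, which uses the identity \eqref{eq: a b relation}). Your sentence ``the shell contributions interlock to produce exactly the exponent $5/6$'' papers over exactly this verification. Relatedly, your per-shell arithmetic is off: after rescaling $\mathbf{q}=2^{-\ell}\tilde{\mathbf{q}}$, the radial third derivative of the time-multiplied phase scales like $t\,2^{-3\ell}$, so the van der Corput bound is $(t\,2^{-3\ell})^{-1/3}$, not $(t\,2^{-\ell})^{-1/3}$; the correct per-shell contribution is $2^{-2\ell}(t\,2^{-\ell})^{-1/2}(t\,2^{-3\ell})^{-1/3}=t^{-5/6}2^{-\ell/2}$, not $t^{-5/6}2^{-7\ell/6}$. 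Both geometric series happen to converge, so the final exponent comes out right, but the discrepancy signals that the shell analysis has not actually been carried through — and it is exactly in reconciling the angular-stationary-phase and radial-van-der-Corput steps (which do not simply multiply, since stationary phase produces an amplitude whose derivatives grow) that the missing structural input from the $O(r^3)$ term is needed.

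Secondary but worth flagging: your angular stationary-phase step attributes non-degeneracy to the cone, but the leading conical phase $\tfrac{\sqrt3}{2}r$ is angularly flat; the angular curvature is supplied by the linear term $\mathbf{k}\cdot\mathbf{x}$ (after rotating so that the linear part is purely in one variable), and its size depends on $|\mathbf{x}|$. This is precisely why the paper first rotates by $\mathbf{R}_\theta$ to kill the linear term in $k_2$, splits into regions according to the sign of $\partial_{k_1}$, and isolates the sector where the phase is stationary in the good direction — a uniformity-in-$\mathbf{v}$ bookkeeping that your sketch does not address.
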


\begin{remark}[Decay rate]
$(i)$ The slowest $O(|t|^{-2/3})$-decay rate is obtained at the intersections of two degenerate frequency curves, not at Dirac points.\\
$(ii)$ Near Dirac points, even though the frequency surfaces $\mathcal{S}_\pm$ (see \eqref{eq: frequency surfaces}) are asymptotically conic, the faster $O(|t|^{-5/6})$-dispersion rate is obtained compare to the $O(|t|^{-1/2})$-decay rate for the standard linear wave equation on $\mathbb{R}^2$. Indeed, the phase $\varphi(\mathbf{k})$ behaves like that for the wave equation, precisely, $\varphi(\mathbf{k})\approx \frac{\sqrt{3}}{2}|\mathbf{K}+\mathbf{k}|$ but only in the leading order. The faster decay can be captured from the additional oscillation by higher-order terms (see Lemma \ref{lemma: phase function asymptotic near the Dirac point} and \ref{lemma: modified phase function asymptotic near the Dirac point}). For the discrete wave equation, a similar faster decay has already been discovered in Schultz \cite{Sch}.\\
$(iii)$ The inequality \eqref{eq: dispersion estimate} is not scaling-invariant, because the lattice spacing of the domain $\mathbf{\Lambda}$ is fixed. Indeed, in the physically important scaling limit regime with strong localization at a Dirac point, we do not expect a uniform decay rate faster than $O(|t|^{-1/2})$, because the inequality \eqref{eq: dispersion estimate} must also be scaled to have coefficients that blow up in the limit.
\end{remark}

\begin{remark}[Non-degenerate frequency case $j=0$]\label{remark: non-degenerate frequency case}
For each $\mathbf{K}\in\mathcal{K}_0$, one can choose a smooth cut-off $\chi_{\approx \mathbf{K}}$ with sufficiently small support where the phase is non-degenerate. Thus, the standard oscillatory integral estimate yields Theorem \ref{main thm: dispersion estimate} with $j=0$.
\end{remark}

\begin{remark}[Degenerate frequency case $j=1,2,3$; direct proof]
$(i)$ If $\mathbf{K}$ is located on a degenerate frequency curve, then a more delicate analysis is required. Indeed, when $\mathbf{K}\in\mathcal{K}_1\cup\mathcal{K}_2$, one may employ the theory of oscillatory integrals with degenerate phases such as celebrated Varchenko's theorem \cite{Varcenko} relating Newton polygons and asymptotics of the oscillatory integrals, and Karpushkin's theorem \cite{Karpushkin 1, Karpushkin 2} for stability of oscillatory integrals (see also \cite{Dui, IkMu, PhSt}). However, the known theories sometimes refer to other known theories, e.g., resolution of singularities \cite{Hironaka}, or unfoldings of singularities \cite{ArGuVar}, with which many researchers are not familiar. For this reason, in this article, a direct proof is provided involving elementary integration by parts and changes of variables.\\
$(ii)$ Another benefit of a direct proof is that the physically most interesting case $\mathbf{K}\in\mathcal{K}_3$ can also be treated in a similar way. Note that near a Dirac point, the phase function is not only degenerate but also non-differentiable as the frequency surface is asymptotically conic. However, most known oscillatory integral theories require some regularity of phases\footnote{It might be possible to prove the desired bound modifying the known algorithm (see \cite{BCH} for instance).}.\\
$(iii)$ Our direct proof is based on the robust technique to show dispersion estimates for the wave-type equations \cite{Oh, Stein}.
\end{remark}

By compactness of the frequency domain $\mathbb{R}^2/\mathbf{\Lambda}^*$, collecting all and interpolating with the unitarity of the flow $e^{it\mathbf{\Delta}}$, we obtain the dispersion estimate without frequency localization.  

\begin{corollary}[Dispersion estimate]\label{cor: dispersion estimate}
For $r\geq 2$, we have
$$\|\mathbf{O}(i\nabla_{\mathbf{x}})^*e^{it\mathbf{\Delta}}\mathbf{u}_0\|_{ L_{\mathbf{x}}^r(\mathbf{\Lambda}; \mathbb{C}^2)}\lesssim \frac{1}{(1+|t|)^{\frac{4}{3}(\frac{1}{2}-\frac{1}{r})}}\|\mathbf{O}(i\nabla_{\mathbf{x}})^*\mathbf{u}_0\|_{L_{\mathbf{x}}^{r'}(\mathbf{\Lambda}; \mathbb{C}^2)}.$$
\end{corollary}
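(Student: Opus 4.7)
\emph{Proof plan.} The plan is to derive the corollary directly from Theorem \ref{main thm: dispersion estimate} by a finite partition of unity on the compact frequency torus, followed by Riesz--Thorin interpolation with the $L^2$ isometry. Setting $T_t := \mathbf{O}(i\nabla_{\mathbf{x}})^{*}e^{it\mathbf{\Delta}}\mathbf{O}(i\nabla_{\mathbf{x}})$ and $\mathbf{w}_0 := \mathbf{O}(i\nabla_{\mathbf{x}})^{*}\mathbf{u}_0$, the statement is equivalent to
\[
\|T_t\mathbf{w}_0\|_{L_{\mathbf{x}}^r}\ls (1+|t|)^{-\frac{4}{3}(\frac{1}{2}-\frac{1}{r})}\|\mathbf{w}_0\|_{L_{\mathbf{x}}^{r'}},
\]
so the goal reduces to proving the endpoint $L^1\to L^\infty$ bound with the worst rate $(1+|t|)^{-2/3}$ and then interpolating.

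First I would establish the endpoint. Because $\mathbb{R}^2/\mathbf{\Lambda}^*$ is compact and the degenerate sets $\mathcal{K}_2$ and $\mathcal{K}_3$ in \eqref{eq: decomposition of the frequency domain} consist of only finitely many points modulo $\mathbf{\Lambda}^*$, I can select a finite collection $\{\mathbf{K}_\ell\}\subset\mathbb{R}^2/\mathbf{\Lambda}^*$ together with the cut-offs $\{\chi_{\approx\mathbf{K}_\ell}\}$ furnished by Theorem \ref{main thm: dispersion estimate} (each $\mathbf{K}_\ell\in\mathcal{K}_{j(\ell)}$ for some $j(\ell)\in\{0,1,2,3\}$), in such a way that $\sum_\ell\chi_{\approx\mathbf{K}_\ell}\ge 1$ on the torus. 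A smooth partition of unity subordinate to this cover yields multipliers $P_\ell$ with $\sum_\ell P_\ell = I$ and $P_\ell = P_{\approx\mathbf{K}_\ell}\circ P_\ell$. Writing $\mathbf{w}_0=\sum_\ell P_\ell\mathbf{w}_0$ and applying Theorem \ref{main thm: dispersion estimate} termwise gives
\[
\|T_t\mathbf{w}_0\|_{L_{\mathbf{x}}^\infty}\le\sum_\ell\|T_t P_{\approx\mathbf{K}_\ell} (P_\ell\mathbf{w}_0)\|_{L_{\mathbf{x}}^\infty}\le\sum_\ell\frac{C_{j(\ell)}}{(1+|t|)^{a_{j(\ell)}}}\|P_\ell\mathbf{w}_0\|_{L_{\mathbf{x}}^1}.
\]
Taking the worst rate $\min_j a_j=a_2=\tfrac{2}{3}$ and observing that each $P_\ell$ is bounded on $L^1(\mathbf{\Lambda};\mathbb{C}^2)$ (its convolution kernel is the inverse Fourier transform on $\mathbf{\Lambda}$ of a smooth function on the compact torus $\mathbb{R}^2/\mathbf{\Lambda}^*$, hence rapidly decaying and so in $\ell^1$, and Young's inequality applies), I obtain the unlocalized endpoint
\[
\|T_t\mathbf{w}_0\|_{L_{\mathbf{x}}^\infty}\ls(1+|t|)^{-2/3}\|\mathbf{w}_0\|_{L_{\mathbf{x}}^1}.
\]

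Next I pair this with the $L^2$ bound. Since $e^{it\mathbf{\Delta}}$ is unitary on $L^2(\mathbf{\Lambda};\mathbb{C}^2)$ and $\mathbf{O}(i\nabla_{\mathbf{x}})$ is unitary on $L^2$ (Remark \ref{remark: O operator}), we have $\|T_t\mathbf{w}_0\|_{L_{\mathbf{x}}^2}=\|\mathbf{w}_0\|_{L_{\mathbf{x}}^2}$. Riesz--Thorin interpolation between this and the previous bound, with interpolation parameter $\theta=1-\tfrac{2}{r}$ so that $\tfrac{1}{r}=\tfrac{1-\theta}{2}$ and $\tfrac{1}{r'}=\tfrac{1+\theta}{2}$, produces the decay exponent $\tfrac{2\theta}{3}=\tfrac{4}{3}(\tfrac{1}{2}-\tfrac{1}{r})$, which is exactly the rate claimed in the corollary.

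The only nontrivial point in the argument is the $L^1$-boundedness of the smooth frequency cut-offs $P_\ell$; once that is granted, the proof is a routine finite-cover-and-interpolate procedure. All the analytic work has already been done in Theorem \ref{main thm: dispersion estimate}, and in particular the fact that $\tfrac{4}{3}(\tfrac{1}{2}-\tfrac{1}{r})$ appears rather than $(1-\tfrac{2}{r})$ reflects precisely that the dispersion at the intersection points of two degenerate curves ($\mathcal{K}_2$) is the dominant, i.e.\ slowest, contribution.
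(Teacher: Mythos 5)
Your proof is correct and follows exactly the route the paper sketches in the sentence preceding the corollary: use compactness of $\mathbb{R}^2/\mathbf{\Lambda}^*$ to produce a finite subordinate partition of unity from the cut-offs of Theorem \ref{main thm: dispersion estimate}, sum the localized $L^1\to L^\infty$ bounds taking the worst rate $a_2=\tfrac{2}{3}$ (using $L^1$-boundedness of each smooth frequency cut-off via rapid decay of its kernel), and Riesz--Thorin interpolate against the $L^2$ unitarity of $T_t=\mathbf{O}(i\nabla_{\mathbf{x}})^*e^{it\mathbf{\Delta}}\mathbf{O}(i\nabla_{\mathbf{x}})$. The parameter bookkeeping $\theta=1-\tfrac{2}{r}$, giving the rate $\tfrac{2\theta}{3}=\tfrac{4}{3}\big(\tfrac{1}{2}-\tfrac{1}{r}\big)$, is right, so nothing is missing.
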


As a consequence, employing the standard interpolation argument \cite{KeelTao}, we deduce Strichartz estimates. We call $(q,r)$ an \textit{admissible pair} if
$$\frac{3}{q}+\frac{2}{r}=1\quad\textup{and}\quad 2\leq q,r\leq\infty.$$
\begin{corollary}[Strichartz estimates]\label{cor: Strichartz}
For admissible pairs $(q,r)$ and $(\tilde{q},\tilde{r})$, we have
\begin{equation}\label{eq: Strichart estimates 1}
\|\mathbf{O}(i\nabla_{\mathbf{x}})^*e^{it\mathbf{\Delta}}\mathbf{u}_0\|_{L_t^q(\mathbb{R}; L_{\mathbf{x}}^r(\mathbf{\Lambda}; \mathbb{C}^2))}\lesssim \|\mathbf{u}_0\|_{L_{\mathbf{x}}^{2}(\mathbf{\Lambda}; \mathbb{C}^2)}
\end{equation}
and 
\begin{equation}\label{eq: Strichart estimates 2}
\bigg\|\mathbf{O}(i\nabla_{\mathbf{x}})^*\int_0^t e^{i(t-t')\mathbf{\Delta}}\mathbf{F}(t') dt'\bigg\|_{L_t^q(\mathbb{R}; L_{\mathbf{x}}^r(\mathbf{\Lambda}; \mathbb{C}^2))}\lesssim \|\mathbf{O}(i\nabla_{\mathbf{x}})^*\mathbf{F}\|_{L_t^{\tilde{q}'}(\mathbb{R}; L_{\mathbf{x}}^{\tilde{r}'}(\mathbf{\Lambda}; \mathbb{C}^2))}.
\end{equation}
\end{corollary}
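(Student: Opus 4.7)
The plan is to reduce the statement to the abstract Keel--Tao machinery \cite{KeelTao} applied to the conjugated propagator
\[
\widetilde{\mathbf{U}}(t) := \mathbf{O}(i\nabla_{\mathbf{x}})^*\, e^{it\mathbf{\Delta}}\, \mathbf{O}(i\nabla_{\mathbf{x}}).
\]
First, I would observe that Remark \ref{remark: O operator} gives $L^2$-unitarity of $\mathbf{O}(i\nabla_{\mathbf{x}})$, so $\widetilde{\mathbf{U}}(t)$ is itself $L^2$-unitary, and the cancellation $\mathbf{O}(i\nabla_{\mathbf{x}})\mathbf{O}(i\nabla_{\mathbf{x}})^* = I$ yields the group property $\widetilde{\mathbf{U}}(t)\widetilde{\mathbf{U}}(s) = \widetilde{\mathbf{U}}(t+s)$; in particular $\widetilde{\mathbf{U}}(t)\widetilde{\mathbf{U}}(s)^* = \widetilde{\mathbf{U}}(t-s)$.

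Next, I would rewrite Corollary \ref{cor: dispersion estimate} by setting $\mathbf{u}_0 = \mathbf{O}(i\nabla_{\mathbf{x}})\mathbf{v}_0$, which puts the dispersion bound in the clean form
\[
\|\widetilde{\mathbf{U}}(t)\mathbf{v}_0\|_{L_{\mathbf{x}}^r} \lesssim (1+|t|)^{-\frac{4}{3}(\frac{1}{2}-\frac{1}{r})}\|\mathbf{v}_0\|_{L_{\mathbf{x}}^{r'}},\qquad r\geq 2.
\]
Taking $r=\infty$ yields the operator-norm bound $\|\widetilde{\mathbf{U}}(t)\widetilde{\mathbf{U}}(s)^*\|_{L^1 \to L^\infty}\lesssim |t-s|^{-2/3}$, which, together with $L^2$-unitarity, is precisely the Keel--Tao dispersive hypothesis with decay exponent $\sigma = 2/3$. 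Since $\sigma < 1$, no endpoint obstruction arises, and the Keel--Tao admissibility condition $\frac{2}{q}+\frac{2\sigma}{r}=\sigma$ reduces, after multiplying by $3/2$, to exactly $\frac{3}{q}+\frac{2}{r}=1$, matching the admissibility definition in the statement.

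I would then directly invoke the Keel--Tao theorem to obtain
\[
\|\widetilde{\mathbf{U}}(t)\mathbf{v}_0\|_{L_t^q L_{\mathbf{x}}^r}\lesssim \|\mathbf{v}_0\|_{L_{\mathbf{x}}^2},\qquad \left\|\int_0^t \widetilde{\mathbf{U}}(t-t')\mathbf{G}(t')\,dt'\right\|_{L_t^q L_{\mathbf{x}}^r}\lesssim \|\mathbf{G}\|_{L_t^{\tilde{q}'} L_{\mathbf{x}}^{\tilde{r}'}}
\]
for all admissible pairs $(q,r),(\tilde{q},\tilde{r})$. To translate back to $e^{it\mathbf{\Delta}}$, I would use the identities $\mathbf{O}(i\nabla_{\mathbf{x}})^* e^{it\mathbf{\Delta}}\mathbf{u}_0 = \widetilde{\mathbf{U}}(t)[\mathbf{O}(i\nabla_{\mathbf{x}})^*\mathbf{u}_0]$ and
\[
\mathbf{O}(i\nabla_{\mathbf{x}})^* \int_0^t e^{i(t-t')\mathbf{\Delta}}\mathbf{F}(t')\,dt' = \int_0^t \widetilde{\mathbf{U}}(t-t')[\mathbf{O}(i\nabla_{\mathbf{x}})^*\mathbf{F}(t')]\,dt',
\]
together with the $L^2$-isometry $\|\mathbf{O}(i\nabla_{\mathbf{x}})^*\mathbf{u}_0\|_{L^2}=\|\mathbf{u}_0\|_{L^2}$, to read off exactly \eqref{eq: Strichart estimates 1} and \eqref{eq: Strichart estimates 2}.

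There is no serious obstacle here: all nontrivial analytic content has already been absorbed into Corollary \ref{cor: dispersion estimate}, and the remaining work is bookkeeping of the $\mathbf{O}$-conjugation together with the verification that the decay exponent $\sigma = 2/3$ produces, via Keel--Tao, precisely the admissibility relation $\frac{3}{q}+\frac{2}{r}=1$.
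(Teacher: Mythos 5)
Your proposal is correct and takes essentially the same approach as the paper, which invokes the Keel--Tao abstract theorem directly; you have merely spelled out the conjugation by $\mathbf{O}(i\nabla_{\mathbf{x}})$, the verification that $\widetilde{\mathbf{U}}(t)$ is a one-parameter unitary group satisfying the $L^1\to L^\infty$ dispersive hypothesis with exponent $\sigma=2/3$, and the translation of the Keel--Tao admissibility relation $\frac{1}{q}+\frac{\sigma}{r}=\frac{\sigma}{2}$ into $\frac{3}{q}+\frac{2}{r}=1$, all of which the paper leaves implicit.
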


\begin{remark}
In Corollary \ref{cor: dispersion estimate} and \ref{cor: Strichartz}, we may drop $\mathbf{O}(i\nabla_{\mathbf{x}})^*$ provided that $r, \tilde{r}\neq \infty$, because $\mathbf{O}(i\nabla_{\mathbf{x}})^*$ is bounded on $L^r(\mathbf{\Lambda}; \mathbb{C}^2))$ for $1<r<\infty$ (see Lemma \ref{lem:boundedness of O}).
\end{remark}

\subsection{Nonlinear application}
As a simple application of our main linear estimates, we establish the small data scattering for the nonlinear Schr\"odinger equation on the reduced periodic lattice $\mathbf{\Lambda}$ with a power-type nonlinearity\footnote{By \eqref{identification}, the $\mathbb{C}^2$-valued equation \eqref{NLS} is equivalent to the scalar-valued equation $i\partial_t u=-\Delta_{\mathbf{H}}u\pm |u|^{p-1}u$ on the hexagonal lattice $\mathbf{H}$.}
\begin{equation}\label{NLS}
i\partial_t\mathbf{u}=-\mathbf{\Delta}\mathbf{u} +  
  \mathcal{N}(\mathbf{u}),
\end{equation}
where 
$$\mathbf{u}=\mathbf{u}(t,\mathbf{x} )=\begin{bmatrix}u_\bullet(t,\mathbf{x} )\\u_\circ(t,\mathbf{x} )\end{bmatrix}:\mathbb{R}\times\mathbf{\Lambda}\to\mathbb{C}^2$$
and the nonlinear term is given by 
\begin{align*}
  \mathcal{N}(\mathbf{u}) = \pm\begin{bmatrix}
    |u_{\bullet}|^{p-1} u_{\bullet}  \\ 
    |u_\circ|^{p-1} u_{\circ} 
  \end{bmatrix}
  =\pm\begin{bmatrix}
    |u_{\bullet}|^{p-1} & 0  \\ 
   0 &  |u_\circ|^{p-1}  
  \end{bmatrix}\mathbf{u}.
\end{align*}

\begin{theorem}[Small data scattering]\label{Thm:nonlinear application}
Suppose that $r>2$, $p>3+\max(\frac{3}{r-2}-\frac32,\frac12-\frac2r)$ and $\|\mathbf{u}_0\|_{L_{\mathbf{x}}^{r'}(\mathbf{\Lambda}; \mathbb{C}^2)} \le \epsilon_0$ for sufficiently small $\epsilon_0>0$. Then, the Cauchy problem with initial data $\mathbf{u}_0$ has a unique global solution satisfying 
\begin{align}\label{nonlinear time decay}
\sup_{t\in\mathbb{R}}\Big\{(1+|t|)^{\frac43(\frac12-\frac1r)}\|\mathbf{u}(t)\|_{ L_{\mathbf{x}}^{r}(\mathbf{\Lambda}; \mathbb{C}^2)}\Big\}\lesssim\|\mathbf{u}_0\|_{ L_{\mathbf{x}}^{r'}(\mathbf{\Lambda}; \mathbb{C}^2)}.
\end{align}  
Furthermore, there exists a forward-in-time (resp., backward-in-time) scattering states $\mathbf{u}_+\in L_{\mathbf{x}}^{2}(\mathbf{\Lambda}; \mathbb{C}^2)$ (resp., $\mathbf{u}_-\in L_{\mathbf{x}}^{2}(\mathbf{\Lambda}; \mathbb{C}^2)$) such that for $t\geq 1$ (resp., $t\leq -1$), then
\begin{equation}\label{eq: nonlinear scattering}
  \|e^{-it\mathbf{\Delta}}\mathbf{u}(t)-\mathbf{u}_\pm\|_{L_{\mathbf{x}}^{2}(\mathbf{\Lambda}; \mathbb{C}^2)}\lesssim \frac{\|\mathbf{u}_0\|_{ L_{\mathbf{x}}^{r'}(\mathbf{\Lambda}; \mathbb{C}^2)}^p}{(1+|t|)^{\sigma_{r,p}-1}},
\end{equation}
where
\begin{equation}\label{eq: nonlinear decay rate}
\sigma_{r,p}:= \left\{\begin{aligned}
&\frac43\bigg(\frac12-\frac1r\bigg)p &&\text{if }p\ge r-1,\\
&\frac{4}{3}\bigg(\frac{1}{2}-\frac{1}{r'p}\bigg)p&&\text{if }p<r-1.
\end{aligned}\right.
\end{equation}
\end{theorem}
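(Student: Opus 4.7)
The plan is to run a standard small-data fixed-point argument for the Duhamel form of \eqref{NLS} in a weighted $L^r$-space, driven by Corollary~\ref{cor: dispersion estimate}. Setting $\alpha := \tfrac{4}{3}\bigl(\tfrac{1}{2}-\tfrac{1}{r}\bigr)$, I work in the complete metric space
\[
X := \Bigl\{\mathbf{u}:\mathbb{R}\to L^{r'}(\mathbf{\Lambda};\mathbb{C}^2)\ :\ \|\mathbf{u}\|_X := \sup_{t}\|\mathbf{u}(t)\|_{L_{\mathbf{x}}^{2}} + \sup_{t}(1+|t|)^{\alpha}\|\mathbf{u}(t)\|_{L_{\mathbf{x}}^{r}} < \infty\Bigr\},
\]
equipped with the Duhamel map $\Phi(\mathbf{u})(t) := e^{it\mathbf{\Delta}}\mathbf{u}_0 - i\int_0^{t} e^{i(t-t')\mathbf{\Delta}}\mathcal{N}(\mathbf{u}(t'))\,dt'$. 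The $L^2$-factor of $\|\Phi(\mathbf{u})\|_X$ is handled by unitarity of $e^{it\mathbf{\Delta}}$, the weighted $L^r$-factor by Corollary~\ref{cor: dispersion estimate}, and Lemma~\ref{lem:boundedness of O} removes the $\mathbf{O}$-factors since $r,r'\in(1,\infty)$; what remains is to control $\|\mathcal{N}(\mathbf{u}(t'))\|_{L^{r'}}$ and $\|\mathcal{N}(\mathbf{u}(t'))\|_{L^{2}}$ along the trajectory.

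The pointwise inequality $\|\mathcal{N}(\mathbf{u})\|_{L^{s}}\le\|\mathbf{u}\|_{L^{sp}}^{p}$, combined with the discrete-lattice embedding $\|\mathbf{u}\|_{L^{b}}\le\|\mathbf{u}\|_{L^{a}}$ for $b\ge a$ (which is the reverse of the Euclidean case), produces two natural regimes. When $p\ge r-1$, one has $pr'\ge r$, so directly $\|\mathcal{N}\|_{L^{r'}}\le\|\mathbf{u}\|_{L^r}^{p}\lesssim (1+|t'|)^{-\alpha p}\|\mathbf{u}\|_X^{p}$, and $\alpha p=\sigma_{r,p}$. When $p<r-1$, one has $2\le pr'<r$ (the lower bound comes from $p>\tfrac{7}{2}-\tfrac{2}{r}>\tfrac{2}{r'}$), so the interpolation
\[
\|\mathbf{u}\|_{L^{pr'}}\le\|\mathbf{u}\|_{L^{2}}^{1-\theta}\|\mathbf{u}\|_{L^{r}}^{\theta},\qquad \theta:=\frac{\tfrac{1}{2}-\tfrac{1}{pr'}}{\tfrac{1}{2}-\tfrac{1}{r}},
\]
combined with the $L^2$-slot of the norm on $X$, gives $\|\mathcal{N}\|_{L^{r'}}\lesssim (1+|t'|)^{-\alpha\theta p}\|\mathbf{u}\|_X^{p}$ with $\alpha\theta p=\sigma_{r,p}$; an analogous computation controls $\|\mathcal{N}\|_{L^2}$ (the $L^2$ slot is needed to close the interpolation in the second regime and is itself bounded by iterating the same estimates, using $\|\mathbf{u}_0\|_{L^2}\le\|\mathbf{u}_0\|_{L^{r'}}$ from the discrete embedding $\ell^{r'}\subset\ell^{2}$).

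The standard convolution-type time lemma
\[
\int_0^{t}(1+|t-t'|)^{-\alpha}(1+|t'|)^{-\sigma_{r,p}}\,dt' \lesssim (1+|t|)^{-\alpha}
\]
applies precisely when $\sigma_{r,p}>1$, which unpacks in the two regimes to the combined threshold $p > 3 + \max\bigl(\tfrac{3}{r-2}-\tfrac{3}{2},\ \tfrac{1}{2}-\tfrac{2}{r}\bigr)$, one argument of the max coming from each regime. This closes $\|\Phi(\mathbf{u})\|_X\lesssim \|\mathbf{u}_0\|_{L^{r'}}+\|\mathbf{u}\|_X^{p}$, and applying the same bound to $\mathcal{N}(\mathbf{u})-\mathcal{N}(\mathbf{v})$ shows $\Phi$ is a contraction on the ball of radius $2C\|\mathbf{u}_0\|_{L^{r'}}$, yielding the unique global solution and \eqref{nonlinear time decay}. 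For scattering I set $\mathbf{u}_\pm:=\mathbf{u}_0\mp i\int_0^{\pm\infty}e^{-it'\mathbf{\Delta}}\mathcal{N}(\mathbf{u}(t'))\,dt'$ and estimate $\|e^{-it\mathbf{\Delta}}\mathbf{u}(t)-\mathbf{u}_+\|_{L^2}$ via a $TT^*$-expansion,
\[
\Bigl\|\int_t^{\infty}e^{-it'\mathbf{\Delta}}\mathcal{N}(\mathbf{u}(t'))\,dt'\Bigr\|_{L^2}^{2}\le\int_t^{\infty}\!\int_t^{\infty}(1+|t'-t''|)^{-\alpha}\|\mathcal{N}(\mathbf{u}(t'))\|_{L^{r'}}\|\mathcal{N}(\mathbf{u}(t''))\|_{L^{r'}}\,dt'\,dt'',
\]
then bound the dispersion kernel trivially by $1$ and integrate $(1+|t'|)^{-\sigma_{r,p}}$ over $[t,\infty)$ to produce the claimed $(1+|t|)^{-(\sigma_{r,p}-1)}$-decay in \eqref{eq: nonlinear scattering}. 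The most delicate step is the bookkeeping that fuses the two interpolation regimes into the single threshold in the statement, together with the algebraic check $\alpha\theta p=\sigma_{r,p}$; the rest is routine given Corollary~\ref{cor: dispersion estimate}.
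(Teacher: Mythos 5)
Your proof is correct and reaches every conclusion in the statement, but it takes a noticeably different organizational route from the paper. The paper separates the argument into three cheap pieces: first it proves unconditional global well-posedness in $C_tL^2_{\mathbf{x}}$ by an elementary contraction plus the exact mass conservation identity (Proposition~\ref{prop: GWP of DNLS}); second it proves the weighted decay \eqref{nonlinear time decay} by a bootstrap (continuity) argument on the already-existing global solution, assuming the a~priori bound \eqref{eq: a priori bound for NLS} up to a maximal time $T_{\max}$ and improving it to rule out $T_{\max}<\infty$; third it obtains scattering by showing $\{e^{-it\mathbf{\Delta}}\mathbf{u}(t)\}$ is Cauchy in $L^2_{\mathbf{x}}$, using only $\|e^{-is\mathbf{\Delta}}\mathcal{N}\|_{L^2}=\|\mathcal{N}\|_{L^2}\lesssim\|\mathcal{N}\|_{L^{r'}}$ via unitarity and the discrete embedding. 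You instead run a single Banach fixed-point directly in the weighted space $X$, which simultaneously produces existence, uniqueness, and the decay; you never invoke mass conservation. Both routes close, with the identical interpolation bookkeeping in the two regimes $p\geq r-1$ and $p<r-1$ and the identical check $\sigma_{r,p}>1\Leftrightarrow p>3+\max(\tfrac{3}{r-2}-\tfrac32,\tfrac12-\tfrac2r)$. The fixed-point is arguably more self-contained; the paper's bootstrap is a touch cleaner in the discrete setting because local theory and mass conservation are essentially free there, so the $L^2$-slot need not be carried in the function space.

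One small criticism of your scattering step: the $TT^*$-expansion you set up is immediately discarded, since after writing the double integral you "bound the dispersion kernel trivially by $1$," at which point the estimate collapses to
\[
\Bigl\|\int_t^{\infty}e^{-it'\mathbf{\Delta}}\mathcal{N}(\mathbf{u}(t'))\,dt'\Bigr\|_{L^2}\leq\int_t^{\infty}\|\mathcal{N}(\mathbf{u}(t'))\|_{L^2}\,dt'\lesssim\int_t^{\infty}\|\mathcal{N}(\mathbf{u}(t'))\|_{L^{r'}}\,dt',
\]
which is precisely the paper's one-line argument by unitarity plus the discrete embedding. So the $TT^*$ framing adds machinery without gaining anything here; it would only earn its keep if you actually exploited the off-diagonal decay of the kernel, which you do not. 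This does not affect correctness, only economy.
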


\begin{remark}
$(i)$ Theorem \ref{Thm:nonlinear application} includes super-cubic nonlinearities, i.e., $p>3$. Note that for the assumption on $p$, the minimum value of $3+\max(\frac{3}{r-2}-\frac32,\frac12-\frac2r)$ is attained when $r=4$, and it is $3$. Indeed, by the time decay rate (Corollary \ref{cor: dispersion estimate}), it is natural to conjecture that small data scattering holds if $p>\frac{5}{2}=1+\frac{3}{2}$. The gap is due to the lack of the vector-field identity in the discrete setting.\\
$(ii)$ If one employs a weaker $O(|t|^{-1/2})$ wave-like $L^1\to L^\infty$ dispersion bound from the crude estimate (Proposition \ref{prop: preliminary degenerate oscillatory integral near the Dirac point}), 
a similar small data scattering can be obtained, but the range of nonlinearities is restricted to $p>\max(2+\frac{4}{r-2},4-\frac2r)$, with $p>\frac{3+\sqrt{17}}{2}$.
\end{remark}

\subsection{Notations}\label{subsec: notations}
Throughout this article, we denote $A\lesssim B$ (resp., $A\gtrsim B$, or $A\sim B$) if there exists $C\geq 1$ such that $A\leq CB$ (resp., $A\geq CB$, or $\frac{1}{C}A\leq B\leq CA$). If the implicit constant $C\geq 1$ depends on some other parameter $a$, then we denote by $A\lesssim_a B$, $A\gtrsim_a B$, or $A\sim_a B$. However, if such dependence is not essential in analysis, the subscript $a$ is omitted. It is important to note that in any case, the implicit constants do not depend on $t\in\mathbb{R}$ and the specific choice of $\mathbf{x}$.

Let $\chi_0:\mathbb{R}\to[0,\infty)$ be a smooth cut-off such that 
\begin{equation}\label{eq: chi0}
\chi_0(s)=\left\{\begin{aligned}
&1&&\textup{if }|s|\leq\tfrac{1}{2},\\
&0&&\textup{if }|s|\geq 1,
\end{aligned}\right.
\end{equation}
and define 
\begin{equation}\label{eq: chi1}
\chi_1:=1-\chi_0.
\end{equation}
For the Littlewood-Paley theory, we choose a standard smooth cut-off
\begin{equation}\label{eq: eta}
\eta:\mathbb{R}\to [0,1]
\end{equation}
such that $\eta\equiv 1$ on $[\frac{6}{5}, \frac{9}{5}]$, $\eta\equiv 0$ outside $[\frac{4}{5}, \frac{11}{5}]$ and $\sum_{N\in 2^{\mathbb{Z}}}\eta(\frac{\cdot}{N})\equiv 1$. Replacing $\chi_0(\frac{\cdot}{\delta})$ by $\sum_{N\leq \delta}\eta(\frac{\cdot}{N})$ if necessary, we may assume that $\chi_0(\frac{\cdot}{\delta})=\sum_{N\leq \delta}\eta(\frac{\cdot}{N})$.

For a non-negative integer $m$, let $\mathcal{O}_m(\mathbf{k})$ denote an analytic function near the origin such that $|\mathcal{O}_m(\mathbf{k})|\lesssim |\mathbf{k}|^m$ and is of the form 
\begin{equation}\label{eq: rm}
\mathcal{O}_m(\mathbf{k})=\sum_{m_1+m_2\geq m} c_{m_1,m_2}k_1^{m_1} k_2^{m_2}.
\end{equation}
In most situations, the specific choice of the coefficients in \eqref{eq: rm} is not essential, but only their bounds are important. If this is the case, abusing notations, we express any such analytic functions by $\mathcal{O}_m(\mathbf{k})$ rather than introducing more notations like $\tilde{\mathcal{O}}_m(\mathbf{k})$, $\tilde{\tilde{\mathcal{O}}}_m(\mathbf{k})$, ...

\subsection{Organization of the paper}
The rest of the paper is organized as follows. In Section \ref{sec: Proof of phase formulae}, we give a proof of the first main result (Theorem \ref{thm: gradient and hessian}). The next four sections are devoted to the proof of the dispersion estimate (Theorem \ref{main thm: dispersion estimate}). In Section \ref{sec: reduction to the oscillatory integral estimates}, we reduce the proof of the dispersion estimate to that of the degenerate phase oscillatory integral estimate. Then, in Sections \ref{sec: dispersion near a Dirac point}, \ref{sec: oscillatory integral localized at a Dirac point at an intersection of two degenerate frequency curves} and \ref{sec: oscillatory integral localized at a non-intersection point}, we prove the oscillatory integral bound corresponding to the case $\mathbf{K}\in\mathcal{K}_j$ with $j=3,2,1$, respectively. In Section \ref{sec: nonlinear application}, we prove small data scattering (Theorem \ref{Thm:nonlinear application}). In Appendix \ref{sec: kernel} and \ref{sec: boundedness of O operator}, we provide the proof of the factorization formula of the linear flow \eqref{eq: flow factorization} and that of the boundedness of the operator $\mathbf{O}(i\nabla_{\mathbf{x}})$ (Lemma \ref{lem:boundedness of O}).

\subsection{Acknowledgment}
Y. Hong was supported by National Research Foundation of Korea (NRF) grant funded by the Korean government (MSIT) (No. RS-2023-00208824, No. RS-2023-00219980). Y. Tadano was supported by by JSPS KAKENHI Grant Number JP23K12991. C. Yang was supported by the National Research Foundation of Korea (NRF) grant funded by the Korea government (MSIT) (No. 2021R1C1C1005700). The authors would like to thank Professor Sung-Jin Oh for explaining the integration by parts trick crucially used throughout this article.

\section{Characterization of degenerate frequencies: Proof of Theorem \ref{thm: gradient and hessian}}\label{sec: Proof of phase formulae}

Introducing the new variable $\tilde{\mathbf{k}}=(\mathbf{k}\cdot\mathbf{v}_1, \mathbf{k}\cdot\mathbf{v}_2)=\mathbf{V}^{\textup{T}}\mathbf{k}$, we write the phase function in a simpler form as 
$$\varphi(\mathbf{k})=\tilde{\varphi}(\tilde{\mathbf{k}}):=\sqrt{3+2\cos \tilde{k}_1 +2\cos \tilde{k}_2 +2\cos(\tilde{k}_1-\tilde{k}_2)}.$$
Then, it suffices to show that 
\begin{equation}\label{eq: Hessian of phase in k tilde}
\nabla_{\tilde{\mathbf{k}}}^2\tilde{\varphi}=\frac{1}{\tilde{\varphi}^3}\begin{bmatrix} 
-\tilde{\alpha}_{2}(\tilde{\alpha}_1+\tilde{\alpha}_{12}) & \tilde{\alpha}_{1}\tilde{\alpha}_{2} \\
\tilde{\alpha}_{1}\tilde{\alpha}_{2} & -\tilde{\alpha}_{1}(\tilde{\alpha}_2+\tilde{\alpha}_{12}),
\end{bmatrix}
\end{equation}
where $\tilde{\alpha}_{1}=1+\cos \tilde{k}_2+\cos(\tilde{k}_1-\tilde{k}_2)$, $\tilde{\alpha}_{2}=1+\cos \tilde{k}_1+\cos(\tilde{k}_1-\tilde{k}_2)$ and $\tilde{\alpha}_{12}=1+\cos \tilde{k}_1+\cos \tilde{k}_2$, because together with the identity $\tilde{\varphi}^2=\tilde{\alpha}_1+\tilde{\alpha}_2+\tilde{\alpha}_{12}$, it implies that 
$$\begin{aligned}
\textup{det}(\nabla_{\tilde{\mathbf{k}}}^2\tilde{\varphi})=\frac{\tilde{\alpha}_1\tilde{\alpha}_2}{\tilde{\varphi}^6}\tilde{\alpha}_{12}(\tilde{\alpha}_1+\tilde{\alpha}_2+\tilde{\alpha}_{12})=\frac{\tilde{\alpha}_1\tilde{\alpha}_2\tilde{\alpha}_{12}}{\tilde{\varphi}^4}.
\end{aligned}$$
For the proof of \eqref{eq: Hessian of phase in k tilde}, we compute  
$$\nabla_{\tilde{\mathbf{k}}}\tilde{\varphi}=\bigg(-\frac{\sin \tilde{k}_1+\sin(\tilde{k}_1-\tilde{k}_2)}{\tilde{\varphi}} , -\frac{\sin \tilde{k}_2-\sin(\tilde{k}_1-\tilde{k}_2)}{\tilde{\varphi}}\bigg).$$
For the Hessian matrix, we calculate the second derivatives. Indeed, differentiating $\partial_{\tilde{k}_1}\tilde{\varphi}$, we obtain 
\begin{equation}\label{(1,1) entry}
\partial_{\tilde{k}_1}^2 \tilde{\varphi}=-\frac{(\cos \tilde{k}_1+\cos(\tilde{k}_1-\tilde{k}_2))\tilde{\varphi}^2+(\sin \tilde{k}_1+\sin(\tilde{k}_1-\tilde{k}_2))^2}{\tilde{\varphi}^3}.
\end{equation}
In \eqref{(1,1) entry}, we simplify the numerator replacing all sine functions by cosines, but we also rearrange the terms in order of $c_{12}=\cos(\tilde{k}_1-\tilde{k}_2)$. Indeed, the first term in the numerator can be written as 
$$\begin{aligned}
\big(\cos \tilde{k}_1+\cos(\tilde{k}_1-\tilde{k}_2)\big)\tilde{\varphi}^2&=(\cos \tilde{k}_1+c_{12})(3+2\cos \tilde{k}_1+2\cos \tilde{k}_2+2c_{12})\\
&=2(c_{12})^2+(3+4\cos \tilde{k}_1+2\cos \tilde{k}_2)c_{12}\\
&\quad+2\cos^2 \tilde{k}_1+(3+2\cos \tilde{k}_2)\cos \tilde{k}_1.
\end{aligned}$$
For the second term, applying the angle-sum formula in the form 
\begin{equation}\label{tricky trig identity}
\sin\theta_1\sin\theta_2=\cos(\theta_1-\theta_2)-\cos\theta_1\cos\theta_2,
\end{equation}
we obtain 
$$\begin{aligned}
\big(\sin \tilde{k}_1+\sin(\tilde{k}_1-\tilde{k}_2)\big)^2&=\sin^2\tilde{k}_1+\sin^2(\tilde{k}_1-\tilde{k}_2)+2\sin \tilde{k}_1\sin(\tilde{k}_1-\tilde{k}_2)\\
&=(1-\cos^2\tilde{k}_1)+(1-(c_{12})^2)+2\big(\cos \tilde{k}_2-(\cos \tilde{k}_1)c_{12}\big)\\
&=-(c_{12})^2-2(\cos \tilde{k}_1)c_{12}+(-\cos^2\tilde{k}_1+2\cos \tilde{k}_2+2).
\end{aligned}$$
Hence, summing them in \eqref{(1,1) entry}, we obtain that 
$$\begin{aligned}
\partial_{\tilde{k}_1}^2\tilde{\varphi}&=-\frac{(c_{12})^2+(3+2\cos \tilde{k}_1+2\cos \tilde{k}_2)c_{12}+(\cos \tilde{k}_1+1)(\cos \tilde{k}_1+2\cos \tilde{k}_2+2)}{\tilde{\varphi}^3}\\
&=-\frac{(c_{12}+\cos \tilde{k}_1+1)(c_{12}+\cos \tilde{k}_1+2\cos \tilde{k}_2+2)}{\tilde{\varphi}^3}=-\frac{\tilde{\alpha}_2(\tilde{\alpha}_1+\tilde{\alpha}_{12})}{\tilde{\varphi}^3}.
\end{aligned}$$
By symmetry, switching the roles of $\tilde{k}_1$ and $\tilde{k}_2$, we prove that $\partial_{\tilde{k}_2}^2\tilde{\varphi}=-\frac{\tilde{\alpha}_1(\tilde{\alpha}_2+\tilde{\alpha}_{12})}{\tilde{\varphi}^3}$.

It remains to compute $\partial_{\tilde{k}_1}\partial_{\tilde{k}_2}\tilde{\varphi}$. Indeed, we have
$$\partial_{\tilde{k}_1}\partial_{\tilde{k}_2}\tilde{\varphi}=\frac{\cos(\tilde{k}_1-\tilde{k}_2)\tilde{\varphi}^2-(\sin \tilde{k}_1+\sin(\tilde{k}_1-\tilde{k}_2))(\sin \tilde{k}_2-\sin(\tilde{k}_1-\tilde{k}_2))}{\tilde{\varphi}^3}.$$
We again replace sines by cosines in the numerator using \eqref{tricky trig identity}, 
$$\begin{aligned}
&\cos(\tilde{k}_1-\tilde{k}_2)\tilde{\varphi}^2-(\sin \tilde{k}_1+\sin (\tilde{k}_1-\tilde{k}_2))(\sin \tilde{k}_2-\sin (\tilde{k}_1-\tilde{k}_2))\\
&=\cos(\tilde{k}_1-\tilde{k}_2)\tilde{\varphi}^2-\sin \tilde{k}_1\sin \tilde{k}_2+\sin \tilde{k}_1\sin(\tilde{k}_1-\tilde{k}_2)+\sin(\tilde{k}_2-\tilde{k}_1)\sin \tilde{k}_2+\sin^2(\tilde{k}_1-\tilde{k}_2)\\
&=c_{12}(3+2\cos \tilde{k}_1+2\cos \tilde{k}_2+2c_{12})-(c_{12}-\cos \tilde{k}_1\cos \tilde{k}_2)+(\cos \tilde{k}_2-(\cos \tilde{k}_1)c_{12})\\
&\quad+(\cos \tilde{k}_1-c_{12}\cos \tilde{k}_2)+1-(c_{12})^2,
\end{aligned}$$
where $c_{12}=\cos(\tilde{k}_1-\tilde{k}_2)$. Then, we arrange terms in order of $c_{12}$ as follows,
$$\begin{aligned}
\partial_{\tilde{k}_1}\partial_{\tilde{k}_2}\tilde{\varphi}&=\frac{(c_{12})^2+(2+\cos \tilde{k}_1+\cos \tilde{k}_2)c_{12}+(1+\cos \tilde{k}_1)(1+\cos \tilde{k}_2)}{\tilde{\varphi}^3}\\
&=\frac{(c_{12}+1+\cos \tilde{k}_1)(c_{12}+1+\cos \tilde{k}_2)}{\tilde{\varphi}^3}=\frac{\tilde{\alpha}_1\tilde{\alpha}_2}{\tilde{\varphi}^3}.
\end{aligned}$$
Therefore, collecting all, we prove \eqref{eq: Hessian of phase in k tilde}.

\section{Reduction to the oscillatory integral estimate}\label{sec: reduction to the oscillatory integral estimates}

By the factorization structure \eqref{eq: flow factorization}, the core part of the frequency localized Schr\"odinger flow $e^{it\mathbf{\Delta}}P_{\approx\mathbf{K}}\mathbf{u_0}$ is given by the scalar-valued flow $e^{it\varphi(-i\mathbf{\nabla}_\mathbf{x})}P_{\approx \mathbf{K}}u_0$, that is, the Fourier multiplier such that 
$$(e^{-it\varphi(-i\mathbf{\nabla}_\mathbf{x})}P_{\approx \mathbf{K}}u_0)^\wedge(\mathbf{k})=e^{-it\varphi(\mathbf{k})}\chi_{\approx \mathbf{K}}(\mathbf{k})\hat{u}_0(\mathbf{k}),$$
where $\varphi(\mathbf{k})$ is the phase function given in \eqref{eq: phase function} and $u_0: \mathbf{\Lambda}\to\mathbb{C}$ is scalar. Thus, by the Fourier transform (see Definition \ref{definition: Fourier transform}), the scalar flow has the integral representation
$$(e^{-it\varphi(-i\mathbf{\nabla}_\mathbf{x})}u_0)(\mathbf{x})=\frac{\sqrt{3}}{8\pi^2}\sum_{\mathbf{y}\in\mathbf{\Lambda}}\mathbb{I}_{\mathbf{k}\approx\mathbf{K}}\bigg(t;\frac{\mathbf{x}-\mathbf{y}}{t}\bigg)u_0(\mathbf{y}),$$
where
\begin{equation}\label{eq: localized oscillatory integral}
\boxed{\quad\mathbb{I}_{\mathbf{k}\approx\mathbf{K}}(t;\mathbf{v}):=\int_{-\infty}^\infty\int_{-\infty}^\infty e^{-it(\varphi(\mathbf{k})-\mathbf{v}\cdot\mathbf{k})}\chi_{\approx \mathbf{K}}(\mathbf{k})d\mathbf{k}\quad}
\end{equation}
for $\mathbf{v}\in\mathbb{R}^2$. Note that in \eqref{eq: localized oscillatory integral}, the domain $\mathbb{R}^2/\mathbf{\Lambda}^*$ of integration can be replaced by $\mathbb{R}^2$ extending the cut-off $\chi_{\approx \mathbf{K}}$ trivially. Thus, it suffices to consider the oscillatory integral $\mathbb{I}_{\mathbf{k}\approx\mathbf{K}}(t;\mathbf{v})$.

In addition, by symmetries, Theorem \ref{main thm: dispersion estimate} can be further reduced as follows. First, we may assume that $\mathbf{K}$ is contained in the primitive rhombic cell $\mathcal{B}_{\textup{rhom}}$ (see \eqref{rhombic cell} and the gray region in Figure \ref{fig: periodic structure of a hexagonal lattice} (B)). Recalling the definition of the phase $\varphi(\mathbf{k})$ (see \eqref{eq: phase function}), we observe that for the integral $\mathbb{I}_{\mathbf{k}\approx\mathbf{K}}(t;\mathbf{v})$, the change of the variables $\mathbf{k}\mapsto\tilde{\mathbf{k}}$ by
$$\begin{bmatrix}\mathbf{k}\cdot\mathbf{v}_1\\\mathbf{k}\cdot\mathbf{v}_2\end{bmatrix}=\begin{bmatrix}\tilde{\mathbf{k}}\cdot\mathbf{v}_2\\\tilde{\mathbf{k}}\cdot\mathbf{v}_1\end{bmatrix}\quad\textup{or}\quad \begin{bmatrix}\mathbf{k}\cdot\mathbf{v}_1\\\mathbf{k}\cdot\mathbf{v}_2\end{bmatrix}=-\begin{bmatrix}\tilde{\mathbf{k}}\cdot\mathbf{v}_1\\\tilde{\mathbf{k}}\cdot\mathbf{v}_2\end{bmatrix}$$
does not change the structure of the integral except that $\mathbf{K}$ and $\mathbf{v}$ are relocated. Therefore, $\mathbf{K}$ can be moved to the first quadrant $\mathbb{R}_{\geq0}^2$. Similarly, by the change of the variables by
$$\begin{bmatrix}\mathbf{k}\cdot\mathbf{v}_1\\\mathbf{k}\cdot\mathbf{v}_2\end{bmatrix}=\begin{bmatrix}\tilde{\mathbf{k}}\cdot\mathbf{v}_2\\\tilde{\mathbf{k}}\cdot\mathbf{v}_1\end{bmatrix}\quad\textup{or}\quad \begin{bmatrix}\mathbf{k}\cdot(\mathbf{v}_1-\mathbf{v}_2)\\\mathbf{k}\cdot\mathbf{v}_2\end{bmatrix}=\begin{bmatrix}\tilde{\mathbf{k}}\cdot\mathbf{v}_1\\-\tilde{\mathbf{k}}\cdot\mathbf{v}_2\end{bmatrix},$$
we may switch the roles of $\alpha_{1}(\mathbf{k})$, $\alpha_{2}(\mathbf{k})$ and $\alpha_{12}(\mathbf{k})$. Therefore, for degenerate frequencies (see \eqref{eq: decomposition of the frequency domain}), it is enough to consider the reduced cases; 
\begin{equation}\label{eq: reduced decomposition of the frequency domain}
\boxed{\quad\begin{aligned}
\mathcal{K}_1':&=\big\{\mathbf{K}\in\mathcal{B}_{\textup{rhom}}\cap\mathbb{R}_{\geq0}^2: \alpha_{1}(\mathbf{K})=0\big\}\setminus (\mathcal{K}_2\cup\mathcal{K}_3)\\
\mathcal{K}_2':&=\big\{\mathbf{K}_2=(0,\pi)\big\}\quad\textup{(intersection of two curves)}\\
\mathcal{K}_3':&=\big\{\mathbf{K}_\star=(0,\tfrac{4\pi}{3})\big\}\quad\textup{(Dirac point)}.
\end{aligned}\quad}
\end{equation}

In conclusion, the proof of the main theorem (Theorem \ref{main thm: dispersion estimate}) is reduced to that of the following oscillatory integral estimate.

\begin{theorem}[Oscillatory integral estimate]\label{thm: oscillatory integral estimates}
For each $\mathbf{K}\in \mathcal{K}_j'$ with $j=1,2,3$, there exists a compactly supported smooth function $\chi_{\approx \mathbf{K}}$ such that $\chi_{\approx \mathbf{K}}=1$ in a sufficiently small neighborhood of $\mathbf{K}$ and $$\sup_{\mathbf{v}\in\mathbb{R}^2}|\mathbb{I}_{\mathbf{k}\approx\mathbf{K}}(t;\mathbf{v})|\lesssim\frac{1}{(1+|t|)^{a_j}},$$
where $a_j$ is given by \eqref{eq: dispersion estimate rate}.
\end{theorem}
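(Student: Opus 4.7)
The plan is to treat the three cases $\mathbf{K}\in\mathcal{K}_j'$ ($j=1,2,3$) separately, since the nature of the degeneracy of $\varphi$ differs in each. In every case I would perform a local change of variables that puts the modified phase $\Phi(\mathbf{k};\mathbf{v}):=\varphi(\mathbf{k})-\mathbf{v}\cdot\mathbf{k}$ into a tractable normal form near $\mathbf{K}$, and then combine stationary-phase / Airy-type asymptotics on a small neighborhood of the $\mathbf{v}$-dependent critical set with integration by parts on its complement (using a lower bound on $|\nabla_{\mathbf{k}}\Phi|$). Because the bound must be uniform in $\mathbf{v}\in\mathbb{R}^2$, the critical set moves with $\mathbf{v}$, so the splitting of $\mathrm{supp}\,\chi_{\approx\mathbf{K}}$ must be chosen carefully; this is the source of most of the technical work.

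For $\mathbf{K}\in\mathcal{K}_1'$ (only $\alpha_1(\mathbf{K})=0$), Theorem \ref{thm: gradient and hessian} gives $\mathrm{rank}\,\nabla^2\varphi(\mathbf{K})=1$, so I would adopt coordinates along and transverse to the curve $\alpha_1=0$, use the implicit function theorem in the transverse (non-degenerate) direction to localize the critical point and extract a $|t|^{-1/2}$ factor by quadratic stationary phase, and analyze the remaining one-variable integral along $\alpha_1=0$. Non-vanishing of the third derivative along this curve produces an Airy-type integral contributing $|t|^{-1/3}$, combining to $|t|^{-5/6}$. For $\mathbf{K}=\mathbf{K}_2=(0,\pi)\in\mathcal{K}_2'$ one checks $\alpha_1(\mathbf{K}_2)=\alpha_2(\mathbf{K}_2)=0$ (two curves cross), so by Theorem \ref{thm: gradient and hessian} the full Hessian vanishes and the Taylor expansion of $\varphi-\varphi(\mathbf{K}_2)-\nabla\varphi(\mathbf{K}_2)\cdot(\mathbf{k}-\mathbf{K}_2)$ begins at order three. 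I would compute the homogeneous cubic part explicitly, verify that it has a non-degenerate discriminant (so that the gradient of the cubic is $\gtrsim|\mathbf{k}-\mathbf{K}_2|^2$), and then perform the isotropic rescaling $\mathbf{k}-\mathbf{K}_2\mapsto|t|^{-1/3}\mathbf{k}$: the critical region has volume $|t|^{-2/3}$ with $O(1)$ integrand, yielding the $|t|^{-2/3}$ bound, while iterated integration by parts handles its complement after the dichotomy $|\mathbf{v}-\nabla\varphi(\mathbf{K}_2)|\lesssim|t|^{-2/3}$ versus its opposite.

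The hardest case is the Dirac point $\mathbf{K}_\star=(0,\tfrac{4\pi}{3})\in\mathcal{K}_3'$, because $\varphi(\mathbf{K}_\star)=0$ forces $\varphi$ to be only conic at $\mathbf{K}_\star$, with $\varphi(\mathbf{K}_\star+\mathbf{k})=\tfrac{\sqrt{3}}{2}|\mathbf{k}|+O(|\mathbf{k}|^2)$, so the smooth-phase oscillatory integral machinery does not apply. The leading conic order alone gives only the wave-like rate $|t|^{-1/2}$; to reach $|t|^{-5/6}$ I would pass to polar coordinates $\mathbf{k}=r\omega$ and use a refined radial expansion of the form $\varphi(\mathbf{K}_\star+r\omega)=\tfrac{\sqrt{3}}{2}r+r^3\kappa(\omega)+O(r^4)$ to extract an Airy-type $|t|^{-1/3}$ from the radial integral after a stationary-phase $|t|^{-1/2}$ in $\omega$. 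The main obstacle is precisely managing this non-smoothness uniformly in $\mathbf{v}$: one must dyadically decompose in $r$, treat the regime $r\lesssim|t|^{-1}$ (where the cone is not resolved by the oscillation) separately, and apply the wave-type integration-by-parts trick (credited in the acknowledgment) to glue the pieces together without losses.
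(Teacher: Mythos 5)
Your overall strategy --- normal form near $\mathbf{K}$, splitting near/away from the $\mathbf{v}$-dependent stationary set, van der Corput in the degenerate direction, integration by parts in the nondegenerate one --- is the same robust scheme the paper runs, and your $j=1$ sketch matches Section~7 closely (Lemma~\ref{lemma: asymptotic of phase (K1 case)} establishes exactly the ``nondegenerate transverse $k_1$, nonvanishing third $k_2$-derivative'' structure you invoke, then one centers on the stationary curve and splits at scale $|t|^{-1/2}$). The other two cases have concrete gaps.

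At the Dirac point your radial expansion is wrong: in polar coordinates $\mathbf{k}=r(\cos\psi,\sin\psi)$ one has
$\varphi(\mathbf{K}_\star+\mathbf{k})=\tfrac{\sqrt{3}}{2}r+\tfrac{1}{8}r^2\sin(3\psi)+O(r^3)$,
so the first correction to the cone is quadratic in $r$, not cubic; this follows directly from Lemma~\ref{lemma: phase function asymptotic near the Dirac point}, and the coefficient $a_\theta=\tfrac{\sin(3\theta)}{2\sqrt{3}}$ there is exactly this $r^2$ term after rotating to the frame of $\mathbf{v}$. It vanishes only for $\psi\in\tfrac{\pi}{3}\mathbb{Z}$, and the rate $5/6$ is precisely the interpolation between the $r^2$-driven radial van der Corput bound $|t|^{-1/2}$ (giving the improved $|t|^{-1}$ stated in \eqref{eq: near a Dirac point 2}) and the uniform $|t|^{-1/3}$ coming from the $r^3$ term, whose coefficient is bounded away from zero because $a_\theta^2\le\tfrac{1}{12}<\tfrac{1}{8}$ (see \eqref{eq: double tilde varphi 3rd derivative}). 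Starting from an expansion with no $r^2$ term you cannot reproduce this structure, and you also have no mechanism for handling the angular stationary point whose Hessian is only $O(r)$ rather than $O(1)$, nor the fact that the polar-coordinate phase is not smooth at $r=0$. The paper sidesteps all of this by rotating to kill the transverse linear term and then using the blowup $(k_1,k_2)\mapsto(k_1,k_1k_2)$ to turn the conic phase into an analytic phase on a rectangle.

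At $\mathbf{K}_2$ your dichotomy $|\mathbf{v}-\nabla\varphi(\mathbf{K}_2)|\lesssim|t|^{-2/3}$ versus its complement does not close. Write $\mathbf{w}=\mathbf{v}-\nabla\varphi(\mathbf{K}_2)$ and $\phi_2(\mathbf{k})=\pm k_1^3+k_2^3+O_4$. When $|t|^{-2/3}\lesssim|\mathbf{w}|\lesssim\delta^2$ the stationary points of $\phi_2-\mathbf{w}\cdot\mathbf{k}$ sit at distance $\sim|\mathbf{w}|^{1/2}$ from the origin, hence still inside $\operatorname{supp}\chi_{\approx\mathbf{K}_2}$, so ``iterated integration by parts on the complement'' fails --- the phase is genuinely stationary there, and the Hessian determinant at those points degenerates like $k_1k_2$ and can be as small as $O(|\mathbf{w}|)$, or vanish to higher order on the axes. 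Recovering a uniform $|t|^{-2/3}$ bound from this requires a second, $|\mathbf{w}|$-adapted rescaling together with a degenerate-phase estimate; what the paper does instead is dyadically decompose $\mathbb{I}_2=\sum_N N^2\,\mathbb{I}_{2;N}(N^3 t;\cdot)$, prove a uniform $|t|^{-5/6}$ bound on each rescaled shell (where the cubic terms live at unit scale and the stationary set is controlled by a shear and a van der Corput argument), and sum the dyadic series to obtain $|t|^{-2/3}$ without ever having to track the $\mathbf{w}$-dependent critical point globally.
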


The non-degenerate case $j=0$ can be excluded (see Remark \ref{remark: non-degenerate frequency case}). The next three sections are devoted to the proof of Theorem \ref{thm: oscillatory integral estimates} for $j=1,2,3$.

\section{Oscillatory integral localized at a Dirac point} \label{sec: dispersion near a Dirac point}

In this section, we prove Theorem \ref{thm: oscillatory integral estimates} for $\mathbf{K}=\mathbf{K}_\star=(0,\frac{4\pi}{3})\in\mathcal{K}_3'$ (see Figure \ref{fig: first primitive cell}). Even more than that, we show that the integral $\mathbb{I}_{\mathbf{k}\approx\mathbf{K}}(t;\mathbf{v})$ decays faster away from certain directions.

\begin{theorem}[Oscillatory integral estimate; $\mathbf{K}=\mathbf{K}_\star$]\label{thm: near a Dirac point}
Suppose that $\mathbf{K}_\star=(0,\frac{4\pi}{3})$, and let $\chi_{\approx \mathbf{K}_\star}(\mathbf{k}):=\chi_0(\frac{|\mathbf{k}-\mathbf{K}_\star|}{\delta})$ in the integral \eqref{eq: localized oscillatory integral}, where $\delta>0$ is a sufficiently small number and $\chi_0$ is defined in \eqref{eq: chi0}. Then, there exists $C_3>0$ such that
\begin{equation}\label{eq: near a Dirac point 1}
\sup_{\mathbf{v}\in\mathbb{R}^2}|\mathbb{I}_{\mathbf{k}\approx\mathbf{K}_\star}(t;\mathbf{v})|\leq\frac{C_3}{(1+|t|)^{5/6}}.
\end{equation}
Moreover, for any small $\epsilon>0$, there exists $C_3'(\epsilon)>0$, with $C_3'(\epsilon)\to \infty$ as $\epsilon\to0$, such that if $\mathbf{v}=v(\cos\theta,\sin\theta)$, $v>0$ and $|\theta-\frac{\pi n}{3}|\geq \epsilon$ for all $n\in\mathbb{Z}$, then
\begin{equation}\label{eq: near a Dirac point 2}
|\mathbb{I}_{\mathbf{k}\approx\mathbf{K}_\star}(t;\mathbf{v})|\leq\frac{C_3'(\epsilon)}{1+|t|}.
\end{equation}
\end{theorem}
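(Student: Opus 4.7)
The plan is to desingularize the conical phase at $\mathbf{K}_\star$ by passing to polar coordinates and then combine angular stationary phase with a van der Corput bound on the residual 1D radial integral. Shifting $\mathbf{q}=\mathbf{k}-\mathbf{K}_\star$ and Taylor-expanding $\varphi^2$ around the Dirac point using $\cos(\tfrac{2\pi}{3}+s)=-\tfrac{1}{2}\cos s-\tfrac{\sqrt{3}}{2}\sin s$ (and the analogous identities at $-\tfrac{2\pi}{3},\tfrac{4\pi}{3}$) together with the algebraic identity $s_1^3-s_2^3-(s_1-s_2)^3=3s_1s_2(s_1-s_2)$, a direct computation in polar coordinates $\mathbf{q}=r(\cos\psi,\sin\psi)$ gives
\[
\varphi^2(\mathbf{K}_\star+\mathbf{q})=\tfrac{3}{4}|\mathbf{q}|^2+\tfrac{\sqrt{3}}{8}r^3\sin(3\psi)+O(r^4),
\]
hence $\varphi(\mathbf{K}_\star+\mathbf{q})=\tfrac{\sqrt{3}}{2}r+\tfrac{1}{8}r^2\sin(3\psi)+O(r^3)$. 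Writing $\mathbf{v}=v(\cos\theta,\sin\theta)$ and factoring out the unit-modulus constant $e^{-it\mathbf{v}\cdot\mathbf{K}_\star}$, the integrand phase is
\[
\tilde\Phi(r,\psi)=rA(\psi)+\tfrac{1}{8}r^2\sin(3\psi)+\gamma(\psi)r^3+O(r^4),\qquad A(\psi):=\tfrac{\sqrt{3}}{2}-v\cos(\psi-\theta),
\]
where $\gamma(\psi)$ comes from the quartic term of $\varphi^2$. Crucially, the $r^2$-coefficient $\tfrac{1}{8}\sin(3\psi)$ vanishes precisely at $\psi\in\tfrac{\pi}{3}\mathbb{Z}$; this is what distinguishes the bad directions of \eqref{eq: near a Dirac point 2}.

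I would first dispose of the easy regimes. When $v$ stays away from $\tfrac{\sqrt{3}}{2}$ (on either side), $A(\psi)$ is bounded below on the support of the cutoff, so $\partial_r\tilde\Phi=A+O(r)$ is uniformly nonzero, and iterating the wave-type integration-by-parts trick $e^{-it\tilde\Phi}=(it\,\partial_r\tilde\Phi)^{-1}\partial_r e^{-it\tilde\Phi}$ from \cite{Oh} yields decay faster than any polynomial. When $v$ is very large, $|\nabla_\mathbf{q}\tilde\Phi|\sim v$ and direct integration by parts in $\mathbf{q}$ works. The substantive case is $v\approx\tfrac{\sqrt{3}}{2}$, on which the rest of the analysis focuses.

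In this near-cone regime, I apply stationary phase in $\psi$ at the two nondegenerate angular critical points $\psi^*\in\{\theta,\theta+\pi\}$, at which $|\partial_\psi^2\tilde\Phi(r,\psi^*)|\sim rv$. On dyadic radial shells $r\sim 2^{-j}$ with $trv\gtrsim 1$, this produces a factor $(trv)^{-1/2}$ and reduces the estimate to the 1D radial oscillatory integrals
\[
J_\pm\sim\frac{1}{\sqrt{tv}}\int_0^\delta\sqrt{r}\,e^{-it[(\frac{\sqrt{3}}{2}\mp v)r\pm\frac{1}{8}\sin(3\theta)r^2+\gamma_\pm(\theta)r^3+O(r^4)]}\,\rho(r)\,dr.
\]
For $\theta$ with $|\theta-\tfrac{\pi n}{3}|\geq\epsilon$ one has $|\sin(3\theta)|\gtrsim\epsilon$, so the second derivative of the $r$-phase is bounded below by $\epsilon$; the second-derivative van der Corput lemma then gives $|J_\pm|\lesssim (tv)^{-1/2}(t\epsilon)^{-1/2}$, which yields \eqref{eq: near a Dirac point 2}. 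For arbitrary $\theta$, including the six bad directions where the quadratic-in-$r$ coefficient vanishes, the third derivative $\partial_r^3\tilde\Phi(r,\psi^*)=6\gamma_\pm(\theta)+O(r)$ remains bounded below; the third-derivative van der Corput then gives $|J_\pm|\lesssim (tv)^{-1/2}\cdot t^{-1/3}\lesssim t^{-5/6}$, establishing the uniform bound \eqref{eq: near a Dirac point 1}.

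The main obstacle I anticipate is twofold. First, the patching between the angular-stationary-phase regime $trv\gtrsim 1$ and the low-frequency shells $trv\lesssim 1$ where $(trv)^{-1/2}$ becomes meaningless; I handle this by dyadic decomposition in $r$, bounding the low shells trivially by $\int_0^{(tv)^{-1}}r\,dr\lesssim (tv)^{-2}$ (which is harmless) and summing the remaining shells geometrically. Second, the verification that the effective cubic coefficient $\gamma_\pm(\theta)$ is nonvanishing at $\theta\in\tfrac{\pi}{3}\mathbb{Z}$: the subtlety is that the $O(r)$ displacement of the angular critical point from $\theta$ itself contributes an $O(r^3)$ term to the residual radial phase via the second-order Taylor expansion of $\sin(3\psi)$, and this contribution must be combined with the native quartic-order term of $\varphi^2$. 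This is a routine but bookkeeping-heavy extension of the cubic computation above, producing an explicit $\gamma_\pm(\theta)$ that is easily checked to be nonzero at the six bad angles.
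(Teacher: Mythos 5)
Your polar-coordinate approach is a genuinely different route from the paper's, though the two are philosophically cousins. The paper rotates by $\mathbf{R}_\theta$ so that the linear part of the phase lies in the $k_1$-direction, splits the conical neighborhood into three angular sectors $\mathbb{A},\mathbb{B},\mathbb{C}$ (away from the $k_1$-axis, near the negative axis, near the positive axis), kills $\mathbb{A},\mathbb{B}$ by integration by parts, and for the genuinely degenerate sector $\mathbb{C}$ desingularizes the cone via $(k_1,k_2)\mapsto(k_1,k_1k_2)$, constructs the stationary curve $k_2=\gamma(k_1)$, and splits into near/far from the curve with $|t|^{-c}$-width, optimizing $c$ (near: van der Corput in $k_1$ with small measure; far: integration by parts in $k_2$ then van der Corput in $k_1$). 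You instead pass directly to polar coordinates $(r,\psi)$ around $\mathbf{K}_\star$, exploiting the tidy expansion $\varphi^2 = \frac34 r^2 + \frac{\sqrt3}{8}r^3\sin(3\psi)+O(r^4)$ (which neatly exhibits the $C_6$-symmetry and identifies the bad directions as $\sin(3\psi)=0$), and run dyadic-in-$r$ angular stationary phase followed by radial van der Corput. Both reduce the problem to a cubic-degenerate 1D oscillatory integral along the radial/bad direction, but yours avoids the algebraic change of variables at the cost of having to control a stationary-phase error and carry an explicit quartic bookkeeping.

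The decisive gap is that you assert but never verify that the effective cubic-in-$r$ coefficient $\gamma_\pm(\theta)$ of the residual radial phase is bounded away from zero, in particular at the six bad angles $\theta\in\frac{\pi}{3}\mathbb{Z}$ where the quadratic coefficient $\frac18\sin(3\theta)$ vanishes. You correctly identify the two sources — the $O(r)$ displacement of the angular critical point contributing via the second-order Taylor expansion of $\sin(3\psi)$, and the native quartic term of $\varphi^2$ — but you dismiss the computation as ``routine but bookkeeping-heavy... easily checked to be nonzero.'' This is exactly the step the paper expends its heaviest effort on (Lemmas~\ref{lemma: phase function asymptotic near the Dirac point} and \ref{lemma: modified phase function asymptotic near the Dirac point}, the curve construction in Lemma~\ref{lemma: construction of curve gamma}, and the formula \eqref{eq: translated modified phase function}), crucially using the nontrivial identity $a_\theta^2+b_\theta^2=\frac{1}{12}$ to conclude $\frac18-a_\theta^2\geq\frac{1}{24}>0$. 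Without the computation you cannot rule out a cancellation. (Carrying it out one finds $\gamma_\pm(\theta)=-\frac{\sqrt3}{16}+\frac{\sqrt3}{24}\sin^2(3\theta)+O(v-\tfrac{\sqrt3}{2})$, which does stay negative, so your plan succeeds — but the claim is not established in the proposal.) A second issue: your assertion that for $v$ bounded away from $\frac{\sqrt3}{2}$ the coefficient $A(\psi)=\frac{\sqrt3}{2}-v\cos(\psi-\theta)$ ``is bounded below'' is false for moderate $v>\frac{\sqrt3}{2}$, where $A$ changes sign on the circle; this regime is still tractable, but requires the same angular-sector decomposition you defer to the hard regime. Finally, for the refined bound \eqref{eq: near a Dirac point 2}, note that $\phi''(r)=\frac14\sin(3\theta)+6\gamma_\pm r+\cdots$ changes sign on $[0,\delta]$ when $\sin(3\theta)>0$ (since $\gamma_\pm<0$), so the second-derivative van der Corput needs a further radial splitting around the zero of $\phi''$ rather than a direct application.
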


\begin{remark}
$(i)$ By the reduction in Section \ref{sec: reduction to the oscillatory integral estimates}, the oscillatory integral estimate \eqref{eq: near a Dirac point 1} implies the desired dispersion estimate \eqref{eq: dispersion estimate} for all $\mathbf{K}\in\mathcal{K}_3$.\\
$(ii)$ By \eqref{eq: near a Dirac point 2}, if $\mathbf{u}_0$ has nonzero value only at one point $\mathbf{x}_0\in\mathbf{\Lambda}$, then the flow decays faster 
$$|\mathbf{O}(i\nabla_{\mathbf{x}})^*e^{it\mathbf{\Delta}}\mathbf{O}(i\nabla_{\mathbf{x}})P_{\approx\mathbf{K}_\star}\mathbf{u}_0(\mathbf{x})|\lesssim\frac{1}{1+|t|}|\mathbf{u}_0(\mathbf{x}_0)|$$
when $\mathbf{x}$ is away from the three straight lines $\mathbf{x}=\mathbf{x}_0+(v\cos\frac{\pi n}{3},v\sin\frac{\pi n}{3})$ with $v\in\mathbb{R}$.
\end{remark}

\subsection{Reduction to the degenerate oscillatory integral estimate}

For the integral \eqref{eq: localized oscillatory integral} with
$\chi_{\approx \mathbf{K}}=\chi_0(\frac{|\cdot-\mathbf{K}_\star|}{\delta})$, by a simple change of variables by translation, we have 
$$\mathbb{I}_{\mathbf{k}\approx\mathbf{K}_\star}(t;\mathbf{v})=\mathbb{I}_{\mathbf{k}\approx\mathbf{K}_\star}(t;v,\theta),$$
where $\mathbf{v}=v(\cos\theta,\sin\theta)$ with $v>0$ and $\theta\in[0,2\pi)$, and 
$$\mathbb{I}_{\mathbf{k}\approx\mathbf{K}_\star}(t;v,\theta):=e^{i\mathbf{K}_\star\cdot\mathbf{v}}\int_{-\infty}^\infty\int_{-\infty}^\infty e^{-it(\varphi(\mathbf{K}_\star+\mathbf{k})-v(\cos\theta,\sin\theta)\cdot\mathbf{k})}\chi_0\bigg(\frac{|\mathbf{k}|}{\delta}\bigg)d\mathbf{k}.$$
Thus, Theorem \ref{thm: near a Dirac point} can be reformulated as follows.

\begin{proposition}[Reformulation of Theorem \ref{thm: near a Dirac point}]\label{prop: near a Dirac point}
$$\begin{aligned}
\sup_{v>0\textup{ and }\theta\in[0,2\pi)} |\mathbb{I}_{\mathbf{k}\approx\mathbf{K}_\star}(t;v,\theta)|&\leq C_3(1+|t|)^{-\frac{5}{6}},\\
\sup_{v>0\textup{ and }|\theta-\frac{\pi n}{3}|\geq \epsilon}|\mathbb{I}_{\mathbf{k}\approx\mathbf{K}_\star}\big(t; v, \theta)|&\leq C_3'(1+|t|)^{-1}.
\end{aligned}$$
\end{proposition}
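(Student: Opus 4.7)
The plan is to work in polar coordinates $\mathbf{k}=r(\cos\psi,\sin\psi)$ centered at $\mathbf{K}_\star$ and exploit the conical-plus-trigonally-warped local expansion of $\varphi$. A direct Taylor expansion at $\mathbf{K}_\star$ (to be formalized as Lemma~\ref{lemma: phase function asymptotic near the Dirac point}) gives
\begin{equation*}
\varphi(\mathbf{K}_\star+\mathbf{k})=\tfrac{\sqrt{3}}{2}\,r+\tfrac{1}{8}r^{2}\sin(3\psi)+O(r^{3}),
\end{equation*}
so that
\begin{equation*}
\mathbb{I}_{\mathbf{k}\approx\mathbf{K}_\star}(t;v,\theta)=e^{i\mathbf{K}_\star\cdot\mathbf{v}}\int_{0}^{2\pi}\!\!\int_{0}^{\infty}\!e^{-it\Phi(r,\psi)}\chi_{0}(r/\delta)\,r\,dr\,d\psi,
\end{equation*}
with $\Phi(r,\psi)=r\bigl[\tfrac{\sqrt{3}}{2}-v\cos(\theta-\psi)\bigr]+\tfrac{1}{8}r^{2}\sin(3\psi)+O(r^{3})$. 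The key feature is that the subleading coefficient $\sin(3\psi)$ reflects the six-fold lattice symmetry and vanishes exactly at the six principal directions $\psi=\pi n/3$, which coincide with the exceptional directions in \eqref{eq: near a Dirac point 2}.

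Setting $A(\psi;v,\theta):=\tfrac{\sqrt{3}}{2}-v\cos(\theta-\psi)$, the first step is a two-region analysis in $\psi$. On the set where $|A(\psi)|\gtrsim 1$, iterated integration by parts in $r$, with the Jacobian $r$ absorbed into the amplitude and the linear-in-$r$ leading term of $\Phi$ supplying the lower bound on $|\partial_{r}\Phi|$, yields $O(|t|^{-N})$ for arbitrary $N$; in particular this already covers every $v$ bounded away from the cone speed $\tfrac{\sqrt{3}}{2}$. On the complementary set, which is nonempty only when $v\sim\tfrac{\sqrt{3}}{2}$, I would treat the $r$-integral by completing the square in the $r^{2}$-term: away from the six zeros of $\sin(3\psi)$, the substitution $r\mapsto r/\sqrt{|t\sin(3\psi)|}$ reduces the $r$-integration to a Fresnel integral on a half-line, producing a factor $|t|^{-1/2}$ multiplied by an angular amplitude $a(\psi)\,e^{-itB(\psi;v,\theta)}$ with $B\sim A(\psi)^{2}/(4\sin(3\psi))$.

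The remaining $\psi$-integral is then a one-dimensional oscillatory integral whose stationary-phase analysis recovers the full 2D critical-point condition $\nabla\varphi=\mathbf{v}$; the critical points lie near $\psi_{*}=\theta$. Because of the $\sin(3\psi)$ structure, one checks that $B''(\psi_{*})$ vanishes precisely when $\theta\in\{\pi n/3:n\in\mathbb{Z}\}$, while $B'''(\psi_{*})\neq 0$ there, so van der Corput's lemma of order three yields $|t|^{-1/3}$ in the worst case; combined with the radial $|t|^{-1/2}$ this produces the main bound $(1+|t|)^{-5/6}$ uniformly in $v,\theta$. Under the separation hypothesis $|\theta-\pi n/3|\ge\epsilon$ one instead has $|B''(\psi_{*})|\gtrsim_{\epsilon}1$, and classical stationary phase supplies an angular $|t|^{-1/2}$, improving the product rate to $|t|^{-1}$ as in \eqref{eq: near a Dirac point 2}.

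The main obstacle I anticipate is the conical singularity at $r=0$: the leading-order phase is only Lipschitz there, the Hessian of $\varphi$ blows up as $\varphi\to 0$ in view of \eqref{eq: determinant of the Hessian}, and the subleading coefficient $\sin(3\psi)$ itself degenerates along the same six principal directions that drive the worst-case rate. A careful dyadic decomposition in $r$, together with angular localization around the zeros of $\sin(3\psi)$, should allow all regimes to be handled within the Stein--Oh integration-by-parts framework referenced in the paper. The most delicate bookkeeping will be maintaining uniformity of implicit constants as $v\to\tfrac{\sqrt{3}}{2}$ and $\theta$ approaches a principal direction simultaneously, which is exactly the coincidence producing the sharp $(1+|t|)^{-5/6}$ rate.
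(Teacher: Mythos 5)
Your plan is in the right general spirit — radial-angular decomposition near the Dirac point, exploit the conical leading term and the trigonal warping $\sin(3\psi)$ — but the leading-order Fresnel reduction and the claimed angular stationary-phase structure both break down in exactly the worst-case regime, and the decisive higher-order term is discarded. Three concrete problems.

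First, the characterization ``$B''(\psi_*)=0$ precisely when $\theta\in\{\pi n/3\}$'' is incorrect. From your ansatz $B(\psi)\sim -2A(\psi)^2/\sin(3\psi)$ with $A(\psi)=\tfrac{\sqrt3}{2}-v\cos(\theta-\psi)$, a short computation near $\psi=\theta$ gives, to leading order,
\begin{equation*}
B''(\theta)\;=\;\frac{2\sqrt{3}\,\bigl(v-\tfrac{\sqrt3}{2}\bigr)}{\sin(3\theta)}\,,
\end{equation*}
so $B''(\psi_*)$ vanishes whenever $v=\tfrac{\sqrt3}{2}$, regardless of $\theta$; the supremum over $v$ includes exactly this speed. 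Moreover, at $v=\tfrac{\sqrt3}{2}$ one has $A(\psi)\approx\tfrac{\sqrt3}{4}(\theta-\psi)^2$, hence $B(\psi)\approx -\tfrac{3}{8\sin(3\theta)}(\theta-\psi)^4$: the angular phase vanishes to \emph{fourth} order, not third, so van der Corput gives only $|t|^{-1/4}$ and the product with the radial $|t|^{-1/2}$ is $|t|^{-3/4}$, which is strictly weaker than $|t|^{-5/6}$.

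Second, the Fresnel factor $|t\sin(3\psi)|^{-1/2}$ becomes singular exactly where the degenerate directions $\psi\approx\pi n/3$ accumulate. You cannot treat $\sin(3\psi)$ as bounded below, because the uniform-in-$\theta$ bound requires controlling the regime where $\psi_*\approx\theta\approx\pi n/3$ and $v\approx\tfrac{\sqrt3}{2}$ simultaneously — i.e.\ where the substitution $r\mapsto r/\sqrt{|t\sin(3\psi)|}$ collapses and the post-substitution $O(r^3)$ remainder is no longer small. This is precisely the case that determines the $5/6$ exponent.

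Third, the $O(r^3)$ correction you drop is where the decisive oscillation lives. The $-\tfrac{1}{16}|\mathbf{k}|^4$ term in $\varphi^2$ produces, after the paper's change of variables, a cubic radial term with \emph{uniformly nonvanishing} coefficient $\tfrac18-a_\theta^2\geq\tfrac{1}{8}-\tfrac{1}{12}>0$ (see Lemma~\ref{lemma: modified phase function asymptotic near the Dirac point} and \eqref{eq: double tilde varphi 3rd derivative}); that cubic term is what supplies the $|t|^{-1/3}$ van der Corput gain in the radial direction even when $a_\theta=0$. A two-term polar expansion cannot deliver this. The paper's route, after rotating so $\mathbf{v}\parallel\mathbf{e}_1$ and substituting $(k_1,k_2/k_1)\mapsto(k_1,k_2)$ to regularize the conical singularity, constructs the stationary curve $k_2=\gamma(k_1)$ in the angular variable and splits at scale $|t|^{-c}$ around it, trading ``small angular measure'' against ``integration-by-parts gain in $k_2$'' rather than attempting stationary phase in the angle; optimizing $c$ yields $5/6$. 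If you want to proceed in your polar framework you would need to (i) retain the full analytic expansion with the $r^3$ term and track the $\theta$-dependence of the $r^2$- and $r^3$-coefficients, and (ii) replace the angular stationary-phase step by a measure/integration-by-parts dichotomy around the angular stationary locus, since the angular phase there is too degenerate for van der Corput to give the needed $|t|^{-1/3}$.
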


For the integral $\mathbb{I}_{\mathbf{k}\approx\mathbf{K}_\star}(t; v, \theta)$, we change the variable by the rotation $\mathbf{R}_\theta\mathbf{k}=(k_1\cos\theta-k_2\sin\theta, k_1\sin\theta+k_2\cos\theta)$ to convert $v(\cos\theta,\sin\theta)\cdot \mathbf{k}$ into $vk_1$, removing the linear term in the $k_2$-direction.

\begin{remark}
In our analysis, the linear component $\mathbf{v}\cdot \mathbf{k}=v_1k_1+v_2k_2$ in the phase is the most problematic, because even when $\mathbf{v}$ is small, the linear term still dominates higher order terms for small $\mathbf{k}$. By rotation, one of the linear terms, $v_2k_2$, is removed. Then, the $k_2$-direction becomes the good direction for integration by parts, while the $k_1$-direction is the bad one.
\end{remark}

Subsequently, we decompose the integral as
\begin{equation}\label{eq: Dirac oscillatory integral decomposition}
\begin{aligned}
\mathbb{I}_{\mathbf{k}\approx\mathbf{K}_\star}(t; v, \theta)&=e^{i\mathbf{K}_\star\cdot\mathbf{v}}\int_{-\infty}^\infty\int_{-\infty}^\infty e^{-it(\varphi(\mathbf{K}_\star+\mathbf{R}_\theta\mathbf{k})-vk_1)}\chi_0\bigg(\frac{|\mathbf{k}|}{\delta}\bigg)\chi_1\bigg(\frac{k_2}{\delta k_1}\bigg)dk_1dk_2\\
&\quad+e^{i\mathbf{K}_\star\cdot\mathbf{v}}\int_{-\infty}^\infty\int_{-\infty}^0 e^{-it(\varphi(\mathbf{K}_\star+\mathbf{R}_\theta\mathbf{k})-vk_1)}\chi_0\bigg(\frac{|\mathbf{k}|}{\delta}\bigg)\chi_0\bigg(\frac{k_2}{\delta k_1}\bigg)dk_1dk_2\\
&\quad+e^{i\mathbf{K}_\star\cdot\mathbf{v}}\int_{-\infty}^\infty\int_0^\infty e^{-it(\varphi(\mathbf{K}_\star+\mathbf{R}_\theta\mathbf{k})-vk_1)}\chi_0\bigg(\frac{|\mathbf{k}|}{\delta}\bigg)\chi_0\bigg(\frac{k_2}{\delta k_1}\bigg)dk_1dk_2\\
&=:\mathbb{A}_{\mathbf{k}\approx\mathbf{K}_\star}(t; v, \theta)+\mathbb{B}_{\mathbf{k}\approx\mathbf{K}_\star}(t; v, \theta)+\mathbb{C}_{\mathbf{k}\approx\mathbf{K}_\star}(t; v, \theta),
\end{aligned}
\end{equation}
where $\chi_0$ and $\chi_1$ are the smooth cut-offs given by \eqref{eq: chi0} and \eqref{eq: chi1}.

For the first two components in the decomposition, the desired bounds can be shown by the non-stationary phase estimate, in other words, by integration by parts (Lemma \ref{lemma: non-stationary estimates near the Dirac point}). Subsequently, the proof of Proposition \ref{prop: near a Dirac point} is reduced to show the bound for the integral $\mathbb{C}_{\mathbf{k}\approx\mathbf{K}_\star}(t; v, \theta)$ (see Proposition \ref{prop: refined degenerate oscillatory integral near the Dirac point}).

\begin{lemma}[Bounds for $\mathbb{A}_{\mathbf{k}\approx\mathbf{K}_\star}(t; v, \theta)$ and $\mathbb{B}_{\mathbf{k}\approx\mathbf{K}_\star}(t; v, \theta)$]\label{lemma: non-stationary estimates near the Dirac point}
$$|\mathbb{A}_{\mathbf{k}\approx\mathbf{K}_\star}(t; v, \theta)|+|\mathbb{B}_{\mathbf{k}\approx\mathbf{K}_\star}(t; v, \theta)|\lesssim (1+|t|)^{-1}.$$
\end{lemma}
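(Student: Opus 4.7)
The plan is to establish both bounds by non-stationary phase (integration by parts) in the coordinate direction in which the phase derivative is uniformly large on the support of the integrand. The key local input is the leading expansion at the Dirac point, $\varphi(\mathbf{K}_\star+\mathbf{R}_\theta\mathbf{k})=\tfrac{\sqrt{3}}{2}|\mathbf{k}|+O(|\mathbf{k}|^2)$, which (since $|\mathbf{R}_\theta\mathbf{k}|=|\mathbf{k}|$) yields $\partial_{k_j}\varphi\approx\tfrac{\sqrt{3}}{2}\,k_j/|\mathbf{k}|$ up to $O(|\mathbf{k}|)$ errors. The second derivatives are of size $O(|\mathbf{k}|^{-1})$, but this singularity is integrable in two dimensions against the area element $r\,dr\,d\alpha$ on the disk $\{|\mathbf{k}|\le\delta\}$.

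For $\mathbb{A}$, the cut-off $\chi_1(k_2/(\delta k_1))$ localizes to $|k_2|\geq\tfrac{\delta}{2}|k_1|$, hence $|k_2|/|\mathbf{k}|\gtrsim\delta$ and $|\partial_{k_2}\varphi|\gtrsim\delta$ uniformly on the support (for $\delta$ small). A single IBP in $k_2$ converts $\mathbb{A}$ into
\[
\mathbb{A}=\frac{1}{it}\iint e^{-it\Phi}\,\partial_{k_2}\!\left(\frac{\chi_0\chi_1}{\partial_{k_2}\varphi}\right)dk_1\,dk_2,
\]
whose right-hand side is absolutely integrable uniformly in $t$: the term $\partial_{k_2}^2\varphi/(\partial_{k_2}\varphi)^2=O(|\mathbf{k}|^{-1}/\delta^2)$ integrates to $\lesssim 1/\delta$, while $\partial_{k_2}\chi_1$ is concentrated on the curve $|k_2|\sim\delta|k_1|$ where $|\partial_{k_2}\varphi|$ remains $\gtrsim\delta$. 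Combined with the trivial bound $|\mathbb{A}|\lesssim\delta^2$ one obtains $|\mathbb{A}|\lesssim(1+|t|)^{-1}$.

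For $\mathbb{B}$, the cut-off $\chi_0(k_2/(\delta k_1))$ localizes to $|k_2|\leq\delta|k_1|$ with $k_2<0$, so $k_1/|\mathbf{k}|\approx\textup{sgn}(k_1)$ and $\partial_{k_1}\varphi\approx\tfrac{\sqrt{3}}{2}\,\textup{sgn}(k_1)$. I split on the sign of $k_1$. On $\{k_1<0\}$ we have $\partial_{k_1}(\varphi-vk_1)\approx-\tfrac{\sqrt{3}}{2}-v\leq-\tfrac{\sqrt{3}}{2}$, bounded away from $0$ by $v>0$, so a single IBP in $k_1$ gives the $(1+|t|)^{-1}$ bound exactly as for $\mathbb{A}$. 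On $\{k_1>0\}$ with $|v-\tfrac{\sqrt{3}}{2}|$ bounded below, the same IBP applies since $|\partial_{k_1}(\varphi-vk_1)|\geq|v-\tfrac{\sqrt{3}}{2}|-O(\delta)$; for $v$ very large the linear term dominates and $|\partial_{k_1}(\varphi-vk_1)|\gtrsim v$, which only improves the estimate.

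The main obstacle is the residual subregion $\{k_1>0,\ |v-\tfrac{\sqrt{3}}{2}|\ll 1\}$ of $\mathbb{B}$, where the leading phase is stationary along the positive $k_1$-axis, which lies on the boundary of $\mathbb{B}$. My plan is to exploit $k_2<0$ strictly by inserting an auxiliary cut-off $\chi(k_2/\epsilon)$ at a scale $\epsilon$ to be optimized: the thin strip $\{|k_2|\le\epsilon\}$ is handled by the trivial estimate, and on $\{|k_2|>\epsilon\}$ one performs IBP in $k_2$, where now $|\partial_{k_2}\varphi|\gtrsim \epsilon/\delta$. The boundary terms produced at $k_2=-\epsilon$ are then IBPed in $k_1$, with the required non-stationarity coming from the subleading term in $\varphi$ (which supplies a non-degenerate $k_1$-derivative even when the leading coefficient $\tfrac{\sqrt{3}}{2}-v$ vanishes). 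Balancing the two contributions in $\epsilon$ is expected to yield the claimed $(1+|t|)^{-1}$ decay uniformly in $v$ and $\theta$.
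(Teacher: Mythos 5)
Your treatment of $\mathbb{A}$ is correct and matches the paper: a single integration by parts in $k_2$, using that the cut-off $\chi_1(k_2/(\delta k_1))$ forces $|k_2|\gtrsim_\delta|\mathbf{k}|$ and hence $|\partial_{k_2}\varphi|\sim_\delta 1$ on the support, with the resulting integrand bounded by $O(1/|k_2|)$, which is absolutely integrable there.

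The treatment of $\mathbb{B}$, however, rests on a misreading of the decomposition \eqref{eq: Dirac oscillatory integral decomposition}. The limits $\int_{-\infty}^{0}$ in $\mathbb{B}$ and $\int_{0}^{\infty}$ in $\mathbb{C}$ belong to the \emph{inner} $dk_1$ integral, not to $k_2$; the paper's proof makes this explicit when it invokes ``$(-k_1)\sim|\mathbf{k}|$, $k_1<0$ and $v>0$ in the integral''. Thus $\mathbb{B}$ is the part of the cone $\{|k_2|\lesssim\delta|k_1|\}$ with $k_1<0$ (and $k_2$ unrestricted), and $\mathbb{C}$ is the part with $k_1>0$. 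You instead took $\mathbb{B}$ to be $\{k_2<0\}$ with both signs of $k_1$, which is a different decomposition and changes the problem entirely.

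With the correct reading $k_1<0$, the term $\mathbb{B}$ closes with a single integration by parts in $k_1$ and no further case analysis: on the support $(-k_1)\sim|\mathbf{k}|$, so $\partial_{k_1}\varphi=\frac{\frac{3}{2}k_1+\mathcal{O}_2(\mathbf{k})}{\sqrt{3|\mathbf{k}|^2+\mathcal{O}_3(\mathbf{k})}}\approx-\tfrac{\sqrt{3}}{2}$, and since $v>0$ the two contributions reinforce rather than compete, giving $\partial_{k_1}(\varphi-vk_1)\lesssim -\tfrac{\sqrt{3}}{2}-v\lesssim -1$ uniformly. The rest is mirror-identical to your $\mathbb{A}$ argument with $k_1\leftrightarrow k_2$. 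There is no residual subregion inside $\mathbb{B}$.

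The ``main obstacle'' you labor over in the last paragraph --- $\{k_1>0,\;|v-\tfrac{\sqrt{3}}{2}|\ll1\}$ --- is genuine, but it lives in $\mathbb{C}$, not $\mathbb{B}$, and it is exactly why the paper devotes Propositions \ref{prop: preliminary degenerate oscillatory integral near the Dirac point} and \ref{prop: refined degenerate oscillatory integral near the Dirac point} to a much more careful stationary-curve analysis, obtaining only $|t|^{-5/6}$ there (which is what sets $a_3=\tfrac{5}{6}$ in \eqref{eq: dispersion estimate rate}). Your sketched two-scale scheme aims for $|t|^{-1}$ on that region and would therefore have to beat the paper's own refined estimate; as written it does not close, and after correcting the reading of $\mathbb{B}$ it is not needed for Lemma \ref{lemma: non-stationary estimates near the Dirac point} at all.
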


For the proof, we employ the asymptotic expansion of $\varphi(\mathbf{K}_\star+\mathbf{R}_\theta\mathbf{k})^2$.

\begin{lemma}[Phase function asymptotic]\label{lemma: phase function asymptotic near the Dirac point}
For $\mathbf{k}\approx\mathbf{0}$, we have
\begin{equation}\label{eq: phase function square asymptotic near the Dirac point}
\varphi(\mathbf{K}_\star+\mathbf{R}_\theta\mathbf{k})^2=\frac{3}{4}\bigg(|\mathbf{k}|^2+a_\theta k_1^3-3b_\theta k_1^2k_2-3a_\theta k_1 k_2^2+b_\theta k_2^3-\frac{1}{16}|\mathbf{k}|^4+\mathcal{O}_5(\mathbf{k})\bigg),
\end{equation}
where $a_\theta=\frac{\sin\theta(3\cos^2\theta-\sin^2\theta)}{2\sqrt{3}}$, $b_\theta=\frac{\cos\theta(3\sin^2\theta-\cos^2\theta)}{2\sqrt{3}}$ and $\mathcal{O}_5(\mathbf{k})$ denotes an analytic function such that $|\mathcal{O}_5(\mathbf{k})|\lesssim|\mathbf{k}|^5$ near the origin (see \eqref{eq: rm}). Moreover, the coefficients $a_\theta$ and $b_\theta$ satisfy 
\begin{equation}\label{eq: a b relation}
a_\theta^2+b_\theta^2=\frac{1}{12}.
\end{equation}
\end{lemma}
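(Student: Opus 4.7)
The plan is to Taylor-expand $\varphi(\mathbf{K}_\star+\mathbf{R}_\theta\mathbf{k})^2$ about $\mathbf{k} = \mathbf{0}$ up to order four and read off each homogeneous piece. Introducing the shorthand $u_j := \mathbf{R}_\theta\mathbf{k}\cdot\mathbf{v}_j$ for $j=1,2$ and $u_{12} := \mathbf{R}_\theta\mathbf{k}\cdot(\mathbf{v}_1-\mathbf{v}_2)$, the linear relation $u_{12} = u_1 - u_2$ will be used repeatedly. Since $\mathbf{K}_\star\cdot\mathbf{v}_1 = \tfrac{2\pi}{3}$, $\mathbf{K}_\star\cdot\mathbf{v}_2 = -\tfrac{2\pi}{3}$, and $\mathbf{K}_\star\cdot(\mathbf{v}_1-\mathbf{v}_2) = \tfrac{4\pi}{3}$, so $\cos$ of each of these is $-\tfrac{1}{2}$, the angle-sum identity applied to \eqref{eq: phase function} gives
$$\varphi(\mathbf{K}_\star+\mathbf{R}_\theta\mathbf{k})^2 = 3 - (\cos u_1+\cos u_2+\cos u_{12}) - \sqrt{3}\,(\sin u_1 - \sin u_2 - \sin u_{12}),$$
into which I would substitute the Taylor expansions of $\sin$ and $\cos$ and collect terms by degree.

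The linear term $-\sqrt{3}(u_1-u_2-u_{12})$ vanishes identically. For the quadratic piece $\tfrac{1}{2}(u_1^2+u_2^2+u_{12}^2)$, the three unit vectors $\mathbf{v}_1,\mathbf{v}_2,\mathbf{v}_1-\mathbf{v}_2$ point at angles $\tfrac{\pi}{6},-\tfrac{\pi}{6},\tfrac{\pi}{2}$, which are equidistributed modulo $\pi$; the identity $\sum_{j}\cos^2(\phi-\theta_j)=\tfrac{3}{2}$ for three $\theta_j$ equidistributed modulo $\pi$ (a two-line computation using $\cos^2 = \tfrac{1+\cos 2}{2}$ and the cube-roots-of-unity sum) yields $u_1^2+u_2^2+u_{12}^2=\tfrac{3}{2}|\mathbf{k}|^2$, and hence the leading $\tfrac{3}{4}|\mathbf{k}|^2$.

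The cubic contribution comes only from the sines, and the key step is the telescoping identity $u_1^3 - u_2^3 - (u_1-u_2)^3 = 3 u_1 u_2 u_{12}$, which collapses the cubic part to $\tfrac{\sqrt{3}}{2}u_1u_2u_{12}$. Writing $\mathbf{R}_\theta\mathbf{k} = (k_1', k_2')$ and using $k_1'+ik_2' = e^{i\theta}(k_1+ik_2)$, the explicit coordinates $u_1 = \tfrac{\sqrt{3}}{2}k_1'+\tfrac{1}{2}k_2'$, $u_2 = \tfrac{\sqrt{3}}{2}k_1'-\tfrac{1}{2}k_2'$, $u_{12} = k_2'$ give
$$\tfrac{\sqrt{3}}{2}u_1u_2u_{12} = \tfrac{\sqrt{3}}{8}\bigl(3(k_1')^2k_2'-(k_2')^3\bigr) = \tfrac{\sqrt{3}}{8}\,\textup{Im}\bigl(e^{3i\theta}(k_1+ik_2)^3\bigr).$$
Expanding the right side and matching coefficients against the proposed cubic polynomial reads off $a_\theta = \tfrac{\sin 3\theta}{2\sqrt{3}}$ and $b_\theta = -\tfrac{\cos 3\theta}{2\sqrt{3}}$; the triple-angle formulae then recover the stated trigonometric expressions, and $a_\theta^2+b_\theta^2 = \tfrac{1}{12}$ is immediate from $\sin^2 3\theta+\cos^2 3\theta = 1$.

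Finally, the quartic piece is $-\tfrac{1}{24}(u_1^4+u_2^4+u_{12}^4)$, and the identity $(x+y)^4+(x-y)^4 = 2(x^4+6x^2y^2+y^4)$ collapses this to $-\tfrac{1}{24}\cdot\tfrac{9}{8}\bigl((k_1')^2+(k_2')^2\bigr)^2 = -\tfrac{3}{64}|\mathbf{k}|^4 = \tfrac{3}{4}\cdot(-\tfrac{1}{16})|\mathbf{k}|^4$. The remainder $\mathcal{O}_5(\mathbf{k})$ is a standard Taylor-remainder bound, preserving the form \eqref{eq: rm} because each $u_j$ is linear in $\mathbf{k}$. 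The main pitfall in the argument is careful sign-tracking of the three values of $\sin\phi_j$ and grouping the three cubes so that the factorization $3u_1u_2u_{12}$ can be applied; once this is done, the appearance of the $3\theta$-periodicity reflecting the three-fold symmetry of the Dirac cone renders the entire expansion transparent.
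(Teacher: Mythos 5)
Your proof is correct, and the end result matches the paper's exactly. The starting point (angle-sum identity at $\mathbf{K}_\star$ yielding $3-\sum\cos u_j-\sqrt{3}(\sin u_1-\sin u_2-\sin u_{12})$, then Taylor expansion) is the same as the paper's, but you organize the algebra differently and, for the cubic piece, more cleanly. The paper first Taylor-expands in unrotated coordinates to get $\tfrac{3}{4}|\mathbf{k}|^2+\tfrac{3\sqrt{3}}{8}k_1^2k_2-\tfrac{\sqrt{3}}{8}k_2^3-\tfrac{3}{64}|\mathbf{k}|^4+\mathcal{O}_5(\mathbf{k})$, then substitutes $\mathbf{R}_\theta\mathbf{k}$ and expands the rotated cubics by brute force (abbreviated with dots in the text); the verification $a_\theta^2+b_\theta^2=\tfrac{1}{12}$ is likewise done by an unilluminating direct expansion. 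You instead exploit the identity $u_1^3-u_2^3-(u_1-u_2)^3=3u_1u_2u_{12}$ to collapse the cubic piece to $\tfrac{\sqrt{3}}{2}u_1u_2u_{12}=\tfrac{\sqrt{3}}{8}\,\mathrm{Im}\bigl(e^{3i\theta}(k_1+ik_2)^3\bigr)$, which reveals $a_\theta=\tfrac{\sin 3\theta}{2\sqrt{3}}$, $b_\theta=-\tfrac{\cos 3\theta}{2\sqrt{3}}$ directly, makes the three-fold symmetry in $\theta$ manifest, and reduces \eqref{eq: a b relation} to the Pythagorean identity. Your treatment of the quadratic piece (equidistributed angles giving $\sum u_j^2=\tfrac{3}{2}|\mathbf{k}|^2$) and the quartic piece (the symmetric sum identity) likewise replaces brute force with structural observations. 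This buys nothing new in scope, but it does replace the paper's elided ``$\cdots$'' computations with identities that one can check without Mathematica, which is worth noting given the authors' remark elsewhere that such symbolic manipulations are ``too difficult to reorganize by hands.'' I verified all coefficients: the complex-variable step gives $\mathrm{Im}(e^{3i\theta}(k_1+ik_2)^3)=\sin 3\theta\,k_1^3+3\cos 3\theta\,k_1^2k_2-3\sin 3\theta\,k_1k_2^2-\cos 3\theta\,k_2^3$, whose coefficients against $\tfrac{3}{4}(a_\theta k_1^3-3b_\theta k_1^2k_2-3a_\theta k_1k_2^2+b_\theta k_2^3)$ match the claimed $a_\theta,b_\theta$, and the triple-angle formulae $\sin 3\theta=\sin\theta(3\cos^2\theta-\sin^2\theta)$, $-\cos 3\theta=\cos\theta(3\sin^2\theta-\cos^2\theta)$ recover the paper's expressions.
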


\begin{proof}
Recalling $\mathbf{K}_\star=(0,\tfrac{4\pi}{3})$, $\mathbf{v}_1=(\frac{\sqrt{3}}{2},\frac{1}{2})$ and $\mathbf{v}_2=(\frac{\sqrt{3}}{2},-\frac{1}{2})$, by the angle sum formula, we expand
$$\begin{aligned}
&\varphi(\mathbf{K}_\star+\mathbf{k})^2\\
&=3+2\cos((\mathbf{K}_\star+\mathbf{k})\cdot\mathbf{v}_1) +2\cos((\mathbf{K}_\star+\mathbf{k})\cdot\mathbf{v}_2) +2\cos((\mathbf{K}_\star+\mathbf{k})\cdot(\mathbf{v}_1-\mathbf{v}_2))\\
&=3+2\cos\bigg(\frac{\sqrt{3}k_1+k_2}{2}+\frac{2\pi}{3}\bigg)+2\cos\bigg(\frac{\sqrt{3}k_1-k_2}{2}-\frac{2\pi}{3}\bigg)+2\cos\bigg(k_2+\frac{4\pi}{3}\bigg)\\
&=3-\cos\bigg(\frac{\sqrt{3}k_1+k_2}{2}\bigg)-\cos\bigg(\frac{\sqrt{3}k_1-k_2}{2}\bigg)-\cos k_2\\
&\quad-\sqrt{3}\bigg\{\sin\bigg(\frac{\sqrt{3}k_1+k_2}{2}\bigg)-\sin\bigg(\frac{\sqrt{3}k_1-k_2}{2}\bigg)-\sin k_2\bigg\}.
\end{aligned}$$
Then, the Taylor series for the sine and the cosine functions yield
$$\begin{aligned}
\varphi(\mathbf{K}_\star+\mathbf{k})^2&=\frac{(\frac{\sqrt{3}k_1+k_2}{2})^2}{2}+\frac{(\frac{\sqrt{3}k_1-k_2}{2})^2}{2}+\frac{k_2^2}{2}-\frac{(\frac{\sqrt{3}k_1+k_2}{2})^4}{24}-\frac{(\frac{\sqrt{3}k_1-k_2}{2})^4}{24}-\frac{k_2^4}{24}+\cdots\\
&\quad-\sqrt{3}\bigg\{-\frac{(\frac{\sqrt{3}k_1+k_2}{2})^3}{6}+\frac{(\frac{\sqrt{3}k_1-k_2}{2})^3}{6}+\frac{k_2^3}{6}+\cdots\bigg\}\\
&=\cdots=\frac{3|\mathbf{k}|^2}{4}+\frac{3\sqrt{3}}{8}k_1^2k_2-\frac{\sqrt{3}}{8}k_2^3-\frac{3}{64}|\mathbf{k}|^4+\mathcal{O}_5(\mathbf{k}).
\end{aligned}$$
Thus, it follows that 
$$\begin{aligned}
\varphi(\mathbf{K}_\star+\mathbf{R}_\theta\mathbf{k})^2&=\frac{3|\mathbf{k}|^2}{4}+\frac{3\sqrt{3}}{8}(k_1\cos\theta-k_2\sin\theta)^2(k_1\sin\theta+k_2\cos\theta)\\
&\quad-\frac{\sqrt{3}}{8}(k_1\sin\theta+k_2\cos\theta)^3-\frac{3}{64}|\mathbf{k}|^4+\mathcal{O}_5(\mathbf{k}),
\end{aligned}$$
where $\mathcal{O}_5(\mathbf{R}_\theta\mathbf{k})$ is still denoted by $\mathcal{O}_5(\mathbf{k})$ with an abuse of notation. Subsequently, expanding the products and rearranging terms in order, we prove the desired asymptotic formula \eqref{eq: phase function square asymptotic near the Dirac point}. Moreover, by direct calculations, one can show that $a_\theta^2+b_\theta^2=\frac{\sin^2\theta(3\cos^2\theta-\sin^2\theta)^2}{12}+\frac{\cos^2\theta(3\sin^2\theta-\cos^2\theta)^2}{12}=\cdots=\frac{(\cos^2\theta+\sin^2\theta)^3}{12}=\frac{1}{12}$.
\end{proof}

\begin{remark}
For $\mathbb{A}_{\mathbf{k}\approx\mathbf{K}_\star}(t; v, \theta)$ and $\mathbb{B}_{\mathbf{k}\approx\mathbf{K}_\star}(t; v, \theta)$, Lemma \ref{lemma: phase function asymptotic near the Dirac point} is good enough, because we can estimate them by integration by parts. Indeed, in the integral  $\mathbb{A}_{\mathbf{k}\approx\mathbf{K}_\star}(t; v, \theta)$, the leading-order term of the $k_2$-directional derivative of the phase $\varphi(\mathbf{K}_\star+\mathbf{R}_\theta\mathbf{k})-vk_1$, i.e., $\frac{\sqrt{3}k_2}{2|\mathbf{k}|}$, dominates the other higher order terms. On the other hand, for $\mathbb{B}_{\mathbf{k}\approx\mathbf{K}_\star}(t; v, \theta)$, the leading-order term of the $k_1$-directional derivative is given by $\frac{\sqrt{3}k_1}{2|\mathbf{k}|}-v_1$, and $-v_1$ has a favorable sign near the negative $k_1$-axis. 
\end{remark}

\begin{proof}[Proof of Lemma \ref{lemma: non-stationary estimates near the Dirac point}]
It suffices to show that $|\mathbb{A}_{\mathbf{k}\approx\mathbf{K}_\star}(t; v, \theta)|+|\mathbb{B}_{\mathbf{k}\approx\mathbf{K}_\star}(t; v, \theta)|\lesssim_\delta |t|^{-1}$, because it is obvious that $|\mathbb{A}_{\mathbf{k}\approx\mathbf{K}_\star}(t; v, \theta)|+|\mathbb{B}_{\mathbf{k}\approx\mathbf{K}_\star}(t; v, \theta)|\lesssim_\delta1$. For $\mathbb{A}_{\mathbf{k}\approx\mathbf{K}_\star}(t; v, \theta)$, we observe from Lemma \ref{lemma: phase function asymptotic near the Dirac point} that  in the integral $\mathbb{A}_{\mathbf{k}\approx\mathbf{K}_\star}(t; v, \theta)$, 
\begin{equation}\label{eq: dirac phase first derivative 1}
\big|\partial_{k_2}\big\{\varphi(\mathbf{K}_\star+\mathbf{R}_\theta\mathbf{k})\big\}\big|=\frac{|\partial_{k_2}\{\varphi(\mathbf{K}_\star+\mathbf{R}_\theta\mathbf{k})^2\}|}{2\varphi(\mathbf{K}_\star+\mathbf{R}_\theta\mathbf{k})}=\frac{|\frac{3}{2}k_2+\mathcal{O}_2(\mathbf{k})|}{\sqrt{3|\mathbf{k}|^2+\mathcal{O}_3(\mathbf{k})}}\sim_\delta1,
\end{equation}
because we have $|k_2|\sim_\delta|\mathbf{k}|$ due to the cut-off $\chi_1(\frac{k_2}{\delta k_1})$. Hence, by integration by parts, it follows that 
$$\mathbb{A}_{\mathbf{k}\approx\mathbf{K}_\star}(t; v, \theta)=\frac{1}{it}\int_{-\infty}^\infty\int_{-\infty}^\infty e^{-it(\varphi(\mathbf{K}_\star+\mathbf{R}_\theta\mathbf{k})-vk_1)}\partial_{k_2}\Bigg\{\frac{\chi_0(\frac{|\mathbf{k}|}{\delta})\chi_1(\frac{k_2}{\delta k_1})}{\partial_{k_2}(\varphi(\mathbf{K}_\star+\mathbf{R}_\theta\mathbf{k}))}\Bigg\}dk_2 dk_1.$$
Note that in the integral, by the identity $f''=\frac{(f^2)''-2(f')^2}{2f}$, Lemma \ref{lemma: phase function asymptotic near the Dirac point} and \eqref{eq: dirac phase first derivative 1}, 
\begin{equation}\label{eq: dirac phase second derivative 1}
\begin{aligned}
\big|\partial_{k_2}^2\big\{\varphi(\mathbf{K}_\star+\mathbf{R}_\theta\mathbf{k})\big\}\big|&=\frac{|\partial_{k_2}^2\{\varphi(\mathbf{K}_\star+\mathbf{R}_\theta\mathbf{k})^2\}-2\{\partial_{k_2}(\varphi(\mathbf{K}_\star+\mathbf{R}_\theta\mathbf{k}))\}^2|}{2\varphi(\mathbf{K}_\star+\mathbf{R}_\theta\mathbf{k})}\\
&=\frac{|\frac{3}{2}+\mathcal{O}_1(\mathbf{k})-2\{\partial_{k_2}(\varphi(\mathbf{K}_\star+\mathbf{R}_\theta\mathbf{k}))\}^2|}{\sqrt{3|\mathbf{k}|^2+\mathcal{O}_3(\mathbf{k})}}\lesssim_\delta\frac{1}{|\mathbf{k}|},
\end{aligned}
\end{equation}
and thus, by \eqref{eq: dirac phase first derivative 1} and \eqref{eq: dirac phase second derivative 1}, 
$$\begin{aligned}
\bigg|\partial_{k_2}\bigg\{\frac{\chi_0(\frac{|\mathbf{k}|}{\delta})\chi_1(\frac{k_2}{\delta k_1})}{\partial_{k_2}(\varphi(\mathbf{K}_\star+\mathbf{R}_\theta\mathbf{k}))}\bigg\}\bigg|&\leq\frac{\frac{1}{\delta}|\chi_0'(\frac{|\mathbf{k}|}{\delta})|}{|\partial_{k_2}(\varphi(\mathbf{K}_\star+\mathbf{R}_\theta\mathbf{k}))|}+\frac{|\chi_0'(\frac{k_2}{\delta k_1})|\frac{1}{\delta|k_1|}}{|\partial_{k_2}(\varphi(\mathbf{K}_\star+\mathbf{R}_\theta\mathbf{k}))|}\\
&\quad+\frac{|\partial_{k_2}^2(\varphi(\mathbf{K}_\star+\mathbf{R}_\theta\mathbf{k}))|}{|\partial_{k_2}(\varphi(\mathbf{K}_\star+\mathbf{R}_\theta\mathbf{k}))|^2}\\
&\lesssim_\delta 1+\frac{1}{|\mathbf{k}|}\lesssim\frac{1}{|k_2|},
\end{aligned}$$
since $\chi_1'=-\chi_0'$ and $\frac{1}{\delta|k_1|}\sim_\delta\frac{1}{|\mathbf{k}|}$ in the support of $\chi_0'(\frac{k_2}{\delta k_1})$.  Note also that $\chi_0(\frac{|\mathbf{k}|}{\delta})\chi_1(\frac{k_2}{\delta k_1})$ is supported in $\{(k_1, k_2): |k_2|\leq2\delta\textup{ and }|k_2|\geq \frac{\delta}{2} |k_1|\}$. Therefore, we prove that
$$|\mathbb{A}_{\mathbf{k}\approx\mathbf{K}_\star}(t; v, \theta)|\lesssim_\delta\frac{1}{|t|}\int_{-2\delta}^{2\delta}\int_{-\frac{2}{\delta}|k_2|}^{\frac{2}{\delta}|k_2|} \frac{1}{|k_2|}dk_1 dk_2\sim\frac{1}{|t|}.$$

Similarly, for $\mathbb{B}_{\mathbf{k}\approx\mathbf{K}_\star}(t; v, \theta)$, by integration by parts with
$$\big|\partial_{k_1}\big\{\varphi(\mathbf{K}_\star+\mathbf{R}_\theta\mathbf{k})-vk_1\big\}\big|=\cdots=\bigg|\frac{\frac{3}{2}k_1+\mathcal{O}_2(\mathbf{k})}{\sqrt{3|\mathbf{k}|^2+\mathcal{O}_3(\mathbf{k})}}-v\bigg|\gtrsim 1$$
(since $(-k_1)\sim |\mathbf{k}|$, $k_1<0$ and $v>0$ in the integral), we write 
$$\mathbb{B}_{\mathbf{k}\approx\mathbf{K}_\star}(t; v, \theta)=\frac{1}{it}\int_{-\infty}^0\int_{-\infty}^\infty e^{-it(\varphi(\mathbf{K}_\star+\mathbf{R}_\theta\mathbf{k})-vk_1)}\partial_{k_1}\bigg\{\frac{\chi_0(\frac{|\mathbf{k}|}{\delta})\chi_0(\frac{k_2}{\delta k_1})}{\partial_{k_1}(\varphi(\mathbf{K}_\star+\mathbf{R}_\theta\mathbf{k})-vk_1)}\bigg\}dk_2 dk_1.$$
Repeating \eqref{eq: dirac phase second derivative 1}, one can show that $|\partial_{k_1}^2\{\varphi(\mathbf{K}_\star+\mathbf{R}_\theta\mathbf{k})\}|\lesssim_\delta\frac{1}{|\mathbf{k}|}$. Then, estimating as before but switching the roles of $k_1$ and $k_2$, one can show that $|\mathbb{B}_{\mathbf{k}\approx\mathbf{K}_\star}(t; v, \theta)|\lesssim_\delta|t|^{-1}$.
\end{proof}

\subsection{Preliminary degenerate oscillatory integral estimate}
For Proposition \ref{prop: near a Dirac point}, by Lemma \ref{lemma: non-stationary estimates near the Dirac point} and \eqref{eq: Dirac oscillatory integral decomposition}, it suffices to consider the integral $\mathbb{C}_{\mathbf{k}\approx\mathbf{K}_\star}(t; v, \theta)$. In this subsection, for the reader's convenience, we provide a primitive weaker decay estimate for $\mathbb{C}_{\mathbf{k}\approx\mathbf{K}_\star}(t; v, \theta)$ motivated by the argument for the dispersion estimate for wave type equations \cite{Oh, Stein}.

\begin{proposition}[Preliminary degenerate oscillatory integral estimate]\label{prop: preliminary degenerate oscillatory integral near the Dirac point}
\begin{equation}\label{eq:preliminary degenerate oscillatory integral}
  |\mathbb{C}_{\mathbf{k}\approx\mathbf{K}_\star}(t; v, \theta)|\lesssim_\delta (1+|t|)^{-\frac{1}{2}}.  
\end{equation}
\end{proposition}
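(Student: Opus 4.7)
The plan is to apply the van der Corput second-derivative lemma in the $k_2$-variable (with $k_1$ frozen), exploiting the fact that on the support of the cutoff $\chi_0(\tfrac{|\mathbf{k}|}{\delta})\chi_0(\tfrac{k_2}{\delta k_1})$ --- the ``good cone'' $\{k_1>0,\ |k_2|\lesssim \delta k_1\}$, so in particular $k_1\sim|\mathbf{k}|$ --- the conical phase near the Dirac point behaves like that of the two-dimensional linear wave operator, while the offending linear term $-vk_1$ is killed by $\partial_{k_2}^2$. This is in the spirit of the classical proof of the $|t|^{-1/2}$ dispersive estimate for the wave equation in $\mathbb{R}^2$.

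First I will derive a pointwise lower bound on $\partial_{k_2}^2\varphi(\mathbf{K}_\star+\mathbf{R}_\theta\mathbf{k})$ on the support of the amplitude. Using the identity $f''=\bigl((f^2)''-2(f')^2\bigr)/(2f)$ together with the expansion $\varphi^2=\tfrac34|\mathbf{k}|^2+\mathcal{O}_3(\mathbf{k})$ from Lemma~\ref{lemma: phase function asymptotic near the Dirac point}, a direct computation yields
\begin{equation*}
\partial_{k_2}^2\varphi(\mathbf{K}_\star+\mathbf{R}_\theta\mathbf{k})=\frac{\sqrt{3}}{2}\cdot\frac{k_1^2}{|\mathbf{k}|^3}+\mathcal{O}(1),
\end{equation*}
and the constraint $k_1\gtrsim|\mathbf{k}|$ built into the cutoff, combined with taking $\delta$ small, promotes this into the uniform estimate $|\partial_{k_2}^2(\varphi-vk_1)|=|\partial_{k_2}^2\varphi|\gtrsim 1/k_1$, valid on the support and uniform in $\theta$ and $v$.

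Next, for each fixed $k_1\in(0,\delta)$, I will view the $k_2$-integral as a one-dimensional oscillatory integral with phase parameter $|t|$, rescale the second derivative so the effective parameter becomes $|t|/k_1$, and apply the van der Corput second-derivative lemma. The amplitude's total variation in $k_2$ is $\mathcal{O}(1)$ uniformly in $k_1$, since the $k_2$-support of $\chi_0(\tfrac{k_2}{\delta k_1})$ has length $\lesssim\delta k_1$ while its $k_2$-derivative is $\lesssim 1/(\delta k_1)$, and $\chi_0(\tfrac{|\mathbf{k}|}{\delta})$ contributes similarly. This produces the inner bound $(k_1/|t|)^{1/2}$; integrating in $k_1$ gives
\begin{equation*}
|\mathbb{C}_{\mathbf{k}\approx\mathbf{K}_\star}(t;v,\theta)|\lesssim_\delta\int_0^\delta(k_1/|t|)^{1/2}\,dk_1\lesssim_\delta |t|^{-1/2},
\end{equation*}
and combining with the trivial bound $\lesssim \delta^2$ valid for $|t|\le 1$ yields the stated $(1+|t|)^{-1/2}$ decay, uniformly in $v>0$ and $\theta$.

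The main obstacle will be making the lower bound on $|\partial_{k_2}^2\varphi|$ genuinely \emph{uniform} despite the conical singularity of $\varphi$ at the Dirac point: since $\partial_{k_2}^2\varphi$ itself blows up like $1/|\mathbf{k}|$, one has to verify that the principal part $\tfrac{\sqrt{3}}{2}k_1^2/|\mathbf{k}|^3$ survives both the bounded $\mathcal{O}(1)$ remainder, coming from the cubic and higher-order terms of $\varphi^2$, and the cross-terms appearing in the $(f^2)''-2(f')^2$ identity. The key structural input is the cutoff $\chi_0(\tfrac{k_2}{\delta k_1})$, which keeps the ratio $k_1/|\mathbf{k}|$ bounded below so that the main term remains of size $1/|\mathbf{k}|$; smallness of $\delta$ then ensures dominance over the $\mathcal{O}(1)$ remainders, and it is precisely this trade-off between the $1/|\mathbf{k}|$ lower bound and the $\mathcal{O}(\delta)$-long $k_1$-integration that recovers the wave-type $|t|^{-1/2}$ decay.
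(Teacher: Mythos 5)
Your proof is correct and the underlying mechanism is the same as the paper's -- apply van der Corput in the second derivative to the $k_2$-integral with lower bound $|\partial_{k_2}^2| \gtrsim 1/k_1$ on each slice, obtain $(k_1/|t|)^{1/2}$, and integrate over $k_1 \in (0,\delta)$ to recover $|t|^{-1/2}$ -- but the implementation differs in one genuine respect: the paper first performs the nonlinear change of variables $(k_1,k_2)\mapsto(k_1,k_1k_2)$ that straightens the conical sector into a rectangle, making the phase $\tilde\varphi$ analytic (Lemma~\ref{lemma: modified phase function asymptotic near the Dirac point}), and then reads off $|\partial_{k_2}^2\tilde\varphi|\sim k_1$; the Jacobian factor $k_1$ then supplies the extra decay. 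You instead remain in the original sectorial coordinates and compute $\partial_{k_2}^2\varphi = \tfrac{\sqrt3}{2}k_1^2/|\mathbf{k}|^3 + \mathcal{O}(1)$ directly from the $\varphi^2$-expansion via $f''=((f^2)''-2(f')^2)/(2f)$, using the cone constraint $k_1\sim|\mathbf{k}|$ to promote this to a uniform $\gtrsim 1/k_1$ bound; the missing Jacobian is compensated by the weaker effective van der Corput parameter $|t|/k_1$ rather than $|t|k_1$, and your total-variation accounting (both $\chi_0(k_2/(\delta k_1))$ and $\chi_0(|\mathbf{k}|/\delta)$ have $\mathcal{O}(1)$ variation in $k_2$) is right. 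Your route is arguably cleaner for this preliminary bound since it bypasses the modified-phase Lemma~\ref{lemma: modified phase function asymptotic near the Dirac point} entirely and relies only on the $\varphi^2$-asymptotics of Lemma~\ref{lemma: phase function asymptotic near the Dirac point}; the paper incurs that cost because the straightened coordinates and the analytic modified phase are precisely what it needs afterwards for the refined $|t|^{-5/6}$ estimate of Proposition~\ref{prop: refined degenerate oscillatory integral near the Dirac point}, so the preliminary and refined arguments share infrastructure there.
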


\begin{remark}
The proof of the proposition is much simpler than the refined bound (Proposition \ref{prop: refined degenerate oscillatory integral near the Dirac point}). This simpler 2D-wave-like $O(|t|^{-1/2})$ decay bound would be good enough to investigate the connection between the Schr\"odinger equation on a honeycomb lattice and the Dirac equation via the continuum limit.
\end{remark}

First, we note that the phase function in $\mathbb{C}_{\mathbf{k}\approx\mathbf{K}_\star}(t; v, \theta)$  is not analytic at the origin. Thus, changing the variables by $(k_1,\frac{k_2}{k_1})\mapsto (k_1, k_2)$, we modify the integral as 
\begin{equation}\label{eq: modified Dirac I3}
\mathbb{C}_{\mathbf{k}\approx\mathbf{K}_\star}(t; v, \theta)=e^{i\mathbf{K}_\star\cdot\mathbf{v}}\int_{-\infty}^\infty\int_0^\infty e^{-it\tilde{\varphi}(\mathbf{k}; v,\theta)} \tilde{\chi}_\delta(\mathbf{k})k_1 dk_1dk_2,
\end{equation}
where
\begin{equation}\label{eq: Dirac modified phase}
\tilde{\varphi}(\mathbf{k}; v,\theta):=\varphi(\mathbf{K}_\star+\mathbf{R}_\theta[k_1, k_1k_2]^{\textup{T}})-vk_1
\end{equation}
and
$$\tilde{\chi}_\delta(\mathbf{k})=\chi_0\bigg(\frac{k_1\sqrt{1+k_2^2}}{\delta}\bigg)\chi_0\bigg(\frac{k_2}{\delta}\bigg).$$

\begin{remark}\label{remark: chi delta support}
$\tilde{\chi}_\delta$ is smooth, and it is supported in $[-10\delta, 10\delta]^2$, since $\sqrt{1+k_2^2}\approx 1$ in the support of $\chi(\frac{k_2}{\delta})$.\\
\end{remark}

By his modification, the integral in \eqref{eq: modified Dirac I3} has an analytic phase function. Its expansion is given as follows.

\begin{lemma}[Modified phase function near the origin]\label{lemma: modified phase function asymptotic near the Dirac point}
$\tilde{\varphi}(\mathbf{k}; v,\theta)$ is analytic near the origin, and
$$\begin{aligned}
\tilde{\varphi}(\mathbf{k}; v,\theta)&=\frac{\sqrt{3}k_1}{2}\bigg\{1-\frac{2}{\sqrt{3}}v+\frac{1}{2}k_2^2+\bigg(\frac{a_\theta}{2}-\frac{3b_\theta}{2} k_2-\frac{7a_\theta}{4}  k_2^2+\frac{5b_\theta}{4} k_2^3\bigg)k_1\\
&\qquad\qquad+\bigg(-\frac{1}{32}-\frac{a_\theta^2}{8}+\frac{3a_\theta b_\theta}{4}k_2+\frac{3(44a_\theta^2-3)}{64} k_2^2-\frac{29}{8}a_\theta b_\theta k_2^3\bigg)k_1^2\\
&\qquad\qquad+k_1^3\mathcal{O}_0(\mathbf{k})+k_2^4\tilde{\mathcal{O}}_0(\mathbf{k})\bigg\},
\end{aligned}$$
where $a_\theta$ and $b_\theta$ are the coefficients given in Lemma \ref{lemma: phase function asymptotic near the Dirac point}, and $\mathcal{O}_0(\mathbf{k})$ and $\tilde{\mathcal{O}}_0(\mathbf{k})$ denotes some analytic function near the origin such that $|\mathcal{O}_0(\mathbf{k})|, |\tilde{\mathcal{O}}_0(\mathbf{k})|\lesssim1$ (see \eqref{eq: rm}).
\end{lemma}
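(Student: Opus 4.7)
The plan is to substitute the anisotropic scaling $\mathbf{k} \mapsto (k_1, k_1 k_2)$ into the expansion of $\varphi(\mathbf{K}_\star + \mathbf{R}_\theta \mathbf{k})^2$ from Lemma \ref{lemma: phase function asymptotic near the Dirac point}, factor out $k_1^2$ so that the resulting radicand is analytic with value $1$ at the origin, take its analytic square root, and subtract $v k_1$.

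First I would apply \eqref{eq: phase function square asymptotic near the Dirac point} with $\mathbf{k}$ replaced by $(k_1, k_1 k_2)$. Under this substitution each monomial $k_1^{m_1} k_2^{m_2}$ acquires the extra factor $k_1^{m_2}$, so $|\mathbf{k}|^2 \mapsto k_1^2(1+k_2^2)$, each cubic term becomes $k_1^3$ times a polynomial in $k_2$, the $|\mathbf{k}|^4$-term becomes $k_1^4(1+k_2^2)^2$, and $\mathcal{O}_5(\mathbf{k})$ turns into $k_1^5$ times a bounded analytic function of $(k_1, k_2)$. Pulling out $k_1^2$ yields
\begin{equation*}
\varphi(\mathbf{K}_\star + \mathbf{R}_\theta[k_1, k_1 k_2]^{\textup{T}})^2 = \tfrac{3}{4}k_1^2 H(\mathbf{k}),
\end{equation*}
where $H(\mathbf{k}) = (1+k_2^2) + k_1 P_1(k_2) - \tfrac{k_1^2}{16}(1+k_2^2)^2 + k_1^3 \mathcal{O}_0(\mathbf{k})$ with $P_1(k_2) = a_\theta - 3b_\theta k_2 - 3a_\theta k_2^2 + b_\theta k_2^3$. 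Since $H(\mathbf{0}) = 1 > 0$, the function $\sqrt{H(\mathbf{k})}$ is analytic near the origin; combined with $k_1 > 0$ in the region of integration of \eqref{eq: modified Dirac I3}, this gives $\tilde{\varphi}(\mathbf{k}; v, \theta) = \tfrac{\sqrt{3}}{2}k_1\bigl(\sqrt{H(\mathbf{k})} - \tfrac{2v}{\sqrt{3}}\bigr)$, which is manifestly analytic.

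Second, I would expand $\sqrt{H(\mathbf{k})}$ using the binomial series $\sqrt{1+x} = 1 + \tfrac{x}{2} - \tfrac{x^2}{8} + \tfrac{x^3}{16} + \cdots$ with $x = H - 1 = k_2^2 + k_1 P_1(k_2) - \tfrac{k_1^2}{16}(1+k_2^2)^2 + k_1^3 \mathcal{O}_0(\mathbf{k})$, and collect by total power of $k_1$ modulo the allowed remainder $k_1^3 \mathcal{O}_0(\mathbf{k}) + k_2^4 \tilde{\mathcal{O}}_0(\mathbf{k})$. The $k_1^0$-slot gives $\sqrt{1+k_2^2} = 1 + \tfrac{k_2^2}{2}$ up to $k_2^4 \tilde{\mathcal{O}}_0(\mathbf{k})$; the $k_1^1$-slot combines $\tfrac{1}{2}P_1(k_2)$ from the linear piece with $-\tfrac{1}{4}k_2^2 P_1(k_2)$ from the cross term $-\tfrac{1}{8}\cdot 2k_2^2 \cdot k_1 P_1(k_2)$ of $-\tfrac{x^2}{8}$; and the $k_1^2$-slot combines $-\tfrac{1}{32}(1+k_2^2)^2$ from $\tfrac{x}{2}$, the contributions $-\tfrac{1}{8}P_1(k_2)^2 + \tfrac{1}{64}k_2^2(1+k_2^2)^2$ from $-\tfrac{x^2}{8}$, and $\tfrac{3 k_2^2 P_1(k_2)^2}{16}$ from the trinomial expansion of $\tfrac{x^3}{16}$, after discarding all $O(k_2^4)$ contributions.

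The main bookkeeping hurdle is the $k_1^2 k_2^2$-coefficient, which after summing the pieces above simplifies to $\tfrac{-3 + 60 a_\theta^2 - 72 b_\theta^2}{64}$; here one invokes the identity $a_\theta^2 + b_\theta^2 = \tfrac{1}{12}$ from Lemma \ref{lemma: phase function asymptotic near the Dirac point} to eliminate $b_\theta^2$, converting the numerator to $132 a_\theta^2 - 9 = 3(44 a_\theta^2 - 3)$ and matching the stated coefficient. All other coefficients follow by direct but careful algebra of the same flavor, and the structure of the remainder is automatic from the order counting in the substitution step. Finally, subtracting $v k_1 = \tfrac{\sqrt{3}}{2}k_1 \cdot \tfrac{2v}{\sqrt{3}}$ produces the claimed expansion.
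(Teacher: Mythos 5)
Your proposal is correct and takes essentially the same approach as the paper: substitute $\mathbf{k}\mapsto(k_1,k_1k_2)$ into Lemma \ref{lemma: phase function asymptotic near the Dirac point}, factor out $\frac{3}{4}k_1^2$ so the radicand is analytic and equals $1$ at the origin, expand the square root via the binomial series, collect by powers of $k_1$ modulo $k_1^3\mathcal{O}_0(\mathbf{k})+k_2^4\tilde{\mathcal{O}}_0(\mathbf{k})$, and invoke $a_\theta^2+b_\theta^2=\frac{1}{12}$ to reduce the $k_1^2k_2^2$ coefficient to $\frac{3(44a_\theta^2-3)}{64}$. The paper organizes the intermediate algebra slightly differently (writing $A_1=P_1(k_2)$ and $A_2=-\frac{1}{16}(1+k_2^2)^2$ and tracking the prefactors $\frac12-\frac14 k_2^2$ and $-\frac18+\frac{3}{16}k_2^2$ explicitly), but your bookkeeping of the contributions from $\frac{x}{2}$, $-\frac{x^2}{8}$, and $\frac{x^3}{16}$, and your check that $-3+60a_\theta^2-72b_\theta^2=3(44a_\theta^2-3)$, reproduce exactly the same computation.
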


\begin{proof}
By Lemma \ref{lemma: phase function asymptotic near the Dirac point}, we write 
$$\begin{aligned}
&\varphi(\mathbf{K}_\star+\mathbf{R}_\theta[k_1, k_1k_2]^{\textup{T}})^2\\
&=\frac{3k_1^2}{4}\bigg\{1+k_2^2+a_\theta k_1-3b_\theta k_1k_2-3a_\theta k_1 k_2^2+b_\theta k_1k_2^3-\frac{1}{16}k_1^2(1+k_2^2)^2+\frac{\mathcal{O}_5(k_1, k_1k_2)}{k_1^2}\bigg\}\\
&=\frac{3k_1^2}{4}\big(1+k_2^2+A_1k_1+A_2k_1^2+k_1^3 \mathcal{O}_0(\mathbf{k})\big),
\end{aligned}$$
where
$$A_1=a_\theta-3b_\theta k_2-3a_\theta  k_2^2+b_\theta k_2^3\quad\textup{and}\quad A_2=-\frac{1}{16}(1+k_2^2)^2.$$
In the above identity, we used that $\frac{\mathcal{O}_5(k_1, k_1k_2)}{k_1^5}=\sum_{\alpha+\beta\geq 5} c_{\alpha\beta}k_1^{\alpha+\beta-5}k_2^\beta$ is analytic function near the origin, so it can be written as $\mathcal{O}_0(\mathbf{k})$ (recall our notation in \eqref{eq: rm}). Hence, by the Taylor series 
\begin{equation}\label{eq: root 1+x expansion}
\sqrt{1+x}=1+\frac{1}{2}x-\frac{1}{8}x^2+\frac{1}{16}x^3-\frac{5}{128}x^4+\frac{7}{256}x^5+\cdots,
\end{equation}
it follows that  
$$\begin{aligned}
&\varphi(\mathbf{K}_\star+\mathbf{R}_\theta[k_1, k_1k_2]^{\textup{T}})\\
&=\frac{\sqrt{3}k_1}{2}\bigg\{1+\frac{1}{2}\big(k_2^2+A_1k_1+A_2k_1^2+k_1^3 \mathcal{O}_0(\mathbf{k})\big)-\frac{1}{8}\big(k_2^2+A_1k_1+A_2k_1^2+k_1^3 \mathcal{O}_0(\mathbf{k})\Big)^2\\
&\qquad\qquad\quad+\frac{1}{16}\big(k_2^2+A_1k_1+A_2k_1^2+k_1^3 \mathcal{O}_0(\mathbf{k})\big)^3-\frac{5}{128}\Big(k_2^2+A_1k_1+A_2k_1^2+k_1^3 \mathcal{O}_0(\mathbf{k})\big)^4\\
&\qquad\qquad\quad+\frac{7}{256}\big(k_2^2+A_1k_1+A_2k_1^2+k_1^3 \mathcal{O}_0(\mathbf{k})\big)^5+\cdots\bigg\}.
\end{aligned}$$
Recalling the notation \eqref{eq: rm}, in $\{\cdots\}$, we can move all cubic and higher-order terms with respect to $k_1$ in $k_1^3\mathcal{O}_0(\mathbf{k})$ and all quartic and higher-order terms with respect to $k_2$ in $k_2^4 \mathcal{O}_0(\mathbf{k})$ as follows. First, we observe that by the binomial theorem,
$$\begin{aligned}
\big(k_2^2+A_1k_1+A_2k_1^2+k_1^3 \mathcal{O}_0(\mathbf{k})\big)^4&=\sum_{\alpha=0}^4 \frac{4!}{\alpha!(4-\alpha)!} k_2^{2\alpha}\big(A_1k_1+A_2k_1^2+k_1^3 \mathcal{O}_0(\mathbf{k})\big)^{4-\alpha}\\
&=k_1^3\mathcal{O}_0(\mathbf{k})+k_2^4\tilde{\mathcal{O}}_0(\mathbf{k})
\end{aligned}$$
and $|k_2^2+A_1k_1+A_2k_1^2+k_1^3 \mathcal{O}_0(\mathbf{k})|\ll1$ near the origin. Hence, we may include 
$$-\frac{5}{128}\big(k_2^2+A_1k_1+A_2k_1^2+k_1^3 \mathcal{O}_0(\mathbf{k})\big)^4+\frac{7}{256}\big(k_2^2+A_1k_1+A_2k_1^2+k_1^3 \mathcal{O}_0(\mathbf{k})\big)^5+\cdots$$
in $(k_1^3+k_2^4)\mathcal{O}_0(\mathbf{k})$, and subsequently, 
$$\begin{aligned}
\varphi(\mathbf{K}_\star+\mathbf{R}_\theta[k_1, k_1k_2]^{\textup{T}})&=\frac{\sqrt{3}k_1}{2}\bigg\{1+\frac{1}{2}\big(k_2^2+A_1k_1+A_2k_1^2\big)-\frac{1}{8}\big(k_2^2+A_1k_1+A_2k_1^2\big)^2\\
&\qquad\qquad\quad+\frac{1}{16}\big(k_2^2+A_1k_1+A_2k_1^2\big)^3+k_1^3\mathcal{O}_0(\mathbf{k})+k_2^4\tilde{\mathcal{O}}_0(\mathbf{k})\bigg\}\\
&=\frac{\sqrt{3}k_1}{2}\bigg\{1+\frac{1}{2}k_2^2+\bigg(\frac{1}{2}-\frac{1}{4}k_2^2\bigg)A_1k_1+\bigg(-\frac{1}{8}+\frac{3}{16}k_2^2\bigg)A_1^2k_1^2\\
&\qquad\qquad\quad+\bigg(\frac{1}{2}-\frac{1}{4}k_2^2\bigg)A_2k_1^2+k_1^3\mathcal{O}_0(\mathbf{k})+k_2^4\tilde{\mathcal{O}}_0(\mathbf{k})\bigg\}.
\end{aligned}$$
In the above expression, expanding the products, more higher-order terms are generated, and they also can be included in $k_1^3\mathcal{O}_0(\mathbf{k})+k_2^4\tilde{\mathcal{O}}_0(\mathbf{k})$. Precisely, we have
$$\begin{aligned}
\bigg(\frac{1}{2}-\frac{1}{4}k_2^2\bigg)A_1&=\frac{1}{2}(a_\theta-3b_\theta k_2-3a_\theta  k_2^2+b_\theta k_2^3)-\frac{1}{4}k_2^2(a_\theta -3b_\theta k_2)+k_2^4\tilde{\mathcal{O}}_0(\mathbf{k})\\
&=\frac{a_\theta}{2}-\frac{3b_\theta}{2} k_2-\frac{7a_\theta}{4}  k_2^2+\frac{5b_\theta}{4} k_2^3+k_2^4\tilde{\mathcal{O}}_0(\mathbf{k})
\end{aligned}$$
and
$$\begin{aligned}
&\bigg(-\frac{1}{8}+\frac{3}{16}k_2^2\bigg)A_1^2+\bigg(\frac{1}{2}-\frac{1}{4}k_2^2\bigg)A_2\\
&=-\frac{1}{8}\big\{a_\theta^2-6a_\theta b_\theta k_2+3(3b_\theta^2-2a_\theta^2) k_2^2+20a_\theta b_\theta k_2^3\big\}+\frac{3}{16}(a_\theta^2k_2^2-6 a_\theta b_\theta k_2^3)\\
&\quad-\frac{1}{32}(1+2k_2^2)+\frac{1}{64}k_2^2+k_2^4\tilde{\mathcal{O}}_0(\mathbf{k})\\
&=-\frac{1}{32}-\frac{a_\theta^2}{8}+\frac{3a_\theta b_\theta}{4}k_2+\frac{3(20a_\theta^2-24b_\theta^2-1)}{64} k_2^2-\frac{29}{8}a_\theta b_\theta k_2^3+k_2^4\tilde{\mathcal{O}}_0(\mathbf{k}),
\end{aligned}$$
but by \eqref{eq: a b relation}, we have $20a_\theta^2-24b_\theta^2-1=44a_\theta^2-3$. Therefore, plugging these, we prove the lemma.
\end{proof}

As a direct consequence, we obtain a preliminary decay estimate for $\mathbb{C}_{\mathbf{k}\approx\mathbf{K}_\star}(t; v, \theta)$.

\begin{proof}[Proof of Proposition \ref{prop: preliminary degenerate oscillatory integral near the Dirac point}]
By the trivial bound $|\mathbb{C}_{\mathbf{k}\approx\mathbf{K}_\star}(t; v, \theta)|\lesssim_\delta1$, it suffices to show that $|\mathbb{C}_{\mathbf{k}\approx\mathbf{K}_\star}(t; v, \theta)|\lesssim_\delta|t|^{-1/2}$. Indeed, by Lemma \ref{lemma: modified phase function asymptotic near the Dirac point}, we have
$$|\partial_{k_2}^2\tilde{\varphi}(\mathbf{k}; v, \theta)|=\frac{\sqrt{3}|k_1|}{2}|1+\mathcal{O}_1(\mathbf{k})|\sim|k_1|$$
in the support of $\tilde{\chi}_\delta$. Thus, by Fubini's theorem and the van der Corput lemma \cite{Stein} for the $k_2$-integral, it follows from \eqref{eq: modified Dirac I3} that 
$$\begin{aligned}
|\mathbb{C}_{\mathbf{k}\approx\mathbf{K}_\star}(t; v, \theta)|&=\bigg|\int_0^\infty\bigg\{\int_{-\infty}^\infty e^{-it\tilde{\varphi}(\mathbf{k}; v, \theta)} \tilde{\chi}_\delta(\mathbf{k}) dk_2\bigg\}k_1dk_1\bigg|\\
&\lesssim\int_0^{2\delta} \frac{1}{(|t|k_1)^{1/2}}k_1 dk_1\sim_\delta|t|^{-1/2},
\end{aligned}$$
because the support of $\tilde{\chi}_\delta$ is contained in $[-2\delta, 2\delta]^2$ (see Remark \ref{remark: chi delta support}).
\end{proof}

\subsection{Refined degenerate oscillatory integral estimate}\label{sec: refined degenerate oscillatory integral estimate}

In the previous subsection, the $O(|t|^{-1/2})$-decay is obtained only from the oscillation of the modified phase function in the $k_2$-direction. In this subsection, by capturing the additional oscillation in the $k_1$-direction, we prove the following improved bound.

\begin{proposition}[Refined degenerate oscillatory integral near the Dirac point $\mathbf{K}_\star$]\label{prop: refined degenerate oscillatory integral near the Dirac point}
$$\begin{aligned}
\sup_{v>0\textup{ and }\theta\in[0,2\pi)} |\mathbb{C}_{\mathbf{k}\approx\mathbf{K}_\star}(t; v, \theta)|&\lesssim(1+|t|)^{-\frac{5}{6}},\\
\sup_{v>0\textup{ and }|\theta-\frac{\pi n}{3}|\geq \epsilon}|\mathbb{C}_{\mathbf{k}\approx\mathbf{K}_\star}(t; v, \theta)|&\lesssim_{\epsilon}(1+|t|)^{-1}.
\end{aligned}$$
\end{proposition}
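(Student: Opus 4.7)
The plan is to sharpen the preliminary $|t|^{-1/2}$ bound of Proposition \ref{prop: preliminary degenerate oscillatory integral near the Dirac point} by exploiting, in addition to the $k_2$-oscillation already used, the oscillation that $\tilde\varphi$ carries in the $k_1$-direction. By Lemma \ref{lemma: modified phase function asymptotic near the Dirac point}, $\partial_{k_2}\tilde\varphi=\tfrac{\sqrt{3}\,k_1}{2}\bigl(k_2-\tfrac{3b_\theta}{2}k_1+O(|\mathbf{k}|^2)\bigr)$, so the implicit function theorem gives a unique smooth critical curve $k_2^\ast(k_1)=\tfrac{3b_\theta}{2}k_1+O(k_1^2)$, independent of $v$, lying well inside the support of $\tilde\chi_\delta$ for $\delta$ small, and satisfying $\partial_{k_2}^2\tilde\varphi|_{k_2^\ast}=\tfrac{\sqrt{3}}{2}k_1(1+O(k_1))$. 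Substituting and collapsing the $k_1^2$-coefficient using $a_\theta^2+b_\theta^2=\tfrac1{12}$ yields the reduced phase
\[
\Phi(k_1;v,\theta):=\tilde\varphi\bigl(k_1,k_2^\ast(k_1);v,\theta\bigr)=\Bigl(\tfrac{\sqrt{3}}{2}-v\Bigr)k_1+\tfrac{\sqrt{3}\,a_\theta}{4}\,k_1^2-\tfrac{\sqrt{3}}{2}\Bigl(\tfrac{1}{24}+b_\theta^2\Bigr)k_1^3+O(k_1^4),
\]
so $\Phi''(k_1)=\tfrac{\sqrt{3}\,a_\theta}{2}+O(k_1)$ and, crucially, $|\Phi'''(k_1)|=3\sqrt{3}\bigl(\tfrac{1}{24}+b_\theta^2\bigr)+O(k_1)$ is bounded below in absolute value by a universal positive constant independent of $v,\theta$.

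Next I would reduce the two-dimensional integral \eqref{eq: modified Dirac I3} to a one-dimensional integral in $k_1$. Split at $k_1=|t|^{-1}$: the range $k_1\in[0,|t|^{-1}]$ contributes at most $\int_0^{1/|t|}k_1\,dk_1=O(|t|^{-2})$ by the trivial bound. On the main range $k_1\in[|t|^{-1},10\delta]$, the change of variable $k_2=k_2^\ast(k_1)+w$ brings $\tilde\varphi$ into the Morse form $\Phi(k_1;v,\theta)+\tfrac{\sqrt{3}\,k_1}{4}w^2\bigl(1+O(|\mathbf{k}|)\bigr)$, and the $w$-integration is then handled as a truncated Fresnel integral (the non-stationary part $|w|\gtrsim 1$ giving an acceptable $O((|t|k_1)^{-N})$ remainder via repeated integration by parts in $w$). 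The net effect is
\[
\int_{-\infty}^{\infty} e^{-it\tilde\varphi(\mathbf{k};v,\theta)}\,\tilde\chi_\delta(\mathbf{k})\,dk_2 \;=\; \frac{A(k_1;v,\theta)}{\sqrt{|t|\,k_1}}\,e^{-it\Phi(k_1;v,\theta)}+\text{negligible}
\]
with $A$ smooth and uniformly bounded in $v,\theta$, so that $\mathbb{C}_{\mathbf{k}\approx\mathbf{K}_\star}$ reduces, modulo $O(|t|^{-2})$, to
\[
\frac{e^{i\mathbf{K}_\star\cdot\mathbf v}}{\sqrt{|t|}}\int_{1/|t|}^{10\delta} e^{-it\Phi(k_1;v,\theta)}\,k_1^{1/2}\,A(k_1;v,\theta)\,dk_1.
\]

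Finally I would apply the van der Corput lemma to the above one-dimensional integral. For the unconditional bound, use the third-derivative form: $|\Phi'''|\geq c>0$ uniformly, and the amplitude $k_1^{1/2}A$ has finite $L^\infty$ norm and $L^1$-integrable derivative on $[0,10\delta]$ (since $\int_0^{10\delta}k_1^{-1/2}dk_1<\infty$), so the $k_1$-integral is $O(|t|^{-1/3})$ and hence $|\mathbb{C}_{\mathbf{k}\approx\mathbf{K}_\star}|\lesssim|t|^{-5/6}$. For the refined directional estimate, observe that $a_\theta=\tfrac{\sin\theta(3\cos^2\theta-\sin^2\theta)}{2\sqrt{3}}$ vanishes precisely at $\theta=n\pi/3$, so $|\theta-n\pi/3|\geq\epsilon$ forces $|a_\theta|\geq c(\epsilon)>0$ and hence $|\Phi''(0)|\geq c'(\epsilon)>0$. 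Choosing $\delta$ small enough that $|\Phi''(k_1)|\gtrsim|a_\theta|$ on the entire active range, the second-derivative van der Corput yields $O((|a_\theta||t|)^{-1/2})$ for the $k_1$-integral and hence the $|t|^{-1}$ decay, with the $|a_\theta|^{-1/2}$ factor absorbed into $C_3'(\epsilon)$.

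The main obstacle will be the Morse/Fresnel reduction in $w$ uniformly in $v$ down to $k_1=|t|^{-1}$, together with the verification that the resulting amplitude $A(k_1;v,\theta)$ is smooth with uniformly controlled $L^\infty$ and BV norms; the $k_1^{1/2}$ factor forces the small-$k_1$ cutoff at $|t|^{-1}$ and makes the BV check explicit. A secondary subtlety in the directional refinement is the possible interior zero of $\Phi''$ at $k_1^\star=\tfrac{a_\theta}{6(b_\theta^2+1/24)}+O(a_\theta^2)$, which requires $\delta$ to be taken sufficiently small relative to $c(\epsilon)$ so that $k_1^\star\notin[0,10\delta]$; otherwise an optimized windowing around $k_1^\star$ (using the third-derivative lower bound on a short interval of width $\sim|t|^{-1/3}$) is needed.
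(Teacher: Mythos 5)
Your proposal is sound and, for the unconditional $|t|^{-5/6}$ bound, reaches the result by a route that is closely related to but genuinely distinct from the paper's. Both proofs start the same way: construct the critical curve $k_2=\gamma(k_1)$ of $\partial_{k_2}\tilde\varphi$ by the implicit function theorem, translate onto it, and verify that the reduced phase is $\Phi(k_1)=(\tfrac{\sqrt3}{2}-v)k_1+\tfrac{\sqrt3\,a_\theta}{4}k_1^2-\tfrac{\sqrt3}{2}(\tfrac18-a_\theta^2)k_1^3+\cdots$ (your coefficient $\tfrac1{24}+b_\theta^2$ equals the paper's $\tfrac18-a_\theta^2$ via $a_\theta^2+b_\theta^2=\tfrac1{12}$), with $|\Phi''|\sim|a_\theta|$ and $|\Phi'''|\sim1$. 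The divergence is in how the $k_2$-integral is handled. The paper never extracts an asymptotic: it splits into $|k_2-\gamma(k_1)|\le|t|^{-c}$ and its complement, bounds the near piece by its measure $|t|^{-c}$ times van der Corput in $k_1$, bounds the far piece by a single integration by parts in $k_2$ (using $|G|\sim|k_2-\gamma(k_1)|$, which costs $\mathcal{R}(t)\lesssim|t|^{c}$) again followed by van der Corput in $k_1$, and optimizes $c=\tfrac12$. You instead perform a full Morse/Fresnel reduction in $w=k_2-\gamma(k_1)$, producing a one-dimensional integral with amplitude $k_1^{1/2}A/\sqrt{|t|}$ and a single application of van der Corput. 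Your version is conceptually cleaner (it isolates the $1$D phase $\Phi$ explicitly), but it concentrates the difficulty exactly where you say it does: a stationary-phase expansion with remainder uniform in $v$, $\theta$ and in $k_1$ down to $|t|^{-1}$, plus $L^\infty$ and BV control of $A$. The paper's cutoff-plus-integration-by-parts scheme avoids uniform asymptotics entirely at the price of the derivative bookkeeping for $G$.

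One caveat concerns the directional $|t|^{-1}$ bound. Both arguments need $|\Phi''|\gtrsim|a_\theta|$ on the entire $k_1$-range, which fails once the zero $k_1^\star\sim a_\theta$ of $\Phi''$ enters $(0,10\delta]$, i.e.\ once $0<a_\theta\lesssim\delta$. You flag this honestly, but note that your proposed fallback (a window of width $|t|^{-1/3}$ around $k_1^\star$ handled by the third-derivative bound) only returns $O(|t|^{-1/3})$ for the one-dimensional integral, hence $|t|^{-5/6}$ overall, not $|t|^{-1}$; and for $v$ tuned so that $\Phi'(k_1^\star)=0$ one has a genuine Airy point, so no windowing can do better. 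The directional estimate therefore really does require the support of the cutoff (i.e.\ $\delta$) to be small relative to $c(\epsilon)$ — which is also what the paper implicitly assumes when it asserts $|\partial_{k_1}^2\tilde{\tilde\varphi}|\sim1$ ``provided $|a_\theta|\gtrsim1$'' for ``$k_1$ small enough.'' You are at least as explicit as the paper on this point, so I do not count it against you, but if you write this up you should state that either $\delta$ depends on $\epsilon$ or the directional claim is restricted to $|a_\theta|\gtrsim\delta$.
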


\begin{remark}\label{remark: a theta lower bound}
$(i)$ Since $a_\theta=\frac{\sin\theta(3\cos^2\theta-\sin^2\theta)}{2\sqrt{3}}=0$ if and only if $\theta=\frac{\pi n}{3}$, the condition for the faster $O(|t|^{-1})$-decay in Proposition \ref{prop: refined degenerate oscillatory integral near the Dirac point} is equivalent to $|a_\theta|\gtrsim 1$.\\
$(ii)$ Proposition \ref{prop: near a Dirac point} and the main result (Theorem \ref{thm: near a Dirac point}) follow from Lemma \ref{lemma: non-stationary estimates near the Dirac point} and Proposition \ref{prop: refined degenerate oscillatory integral near the Dirac point} in the decomposition \eqref{eq: Dirac oscillatory integral decomposition}.
\end{remark}

\begin{figure}
    \centering
    \includegraphics[width=1\linewidth]{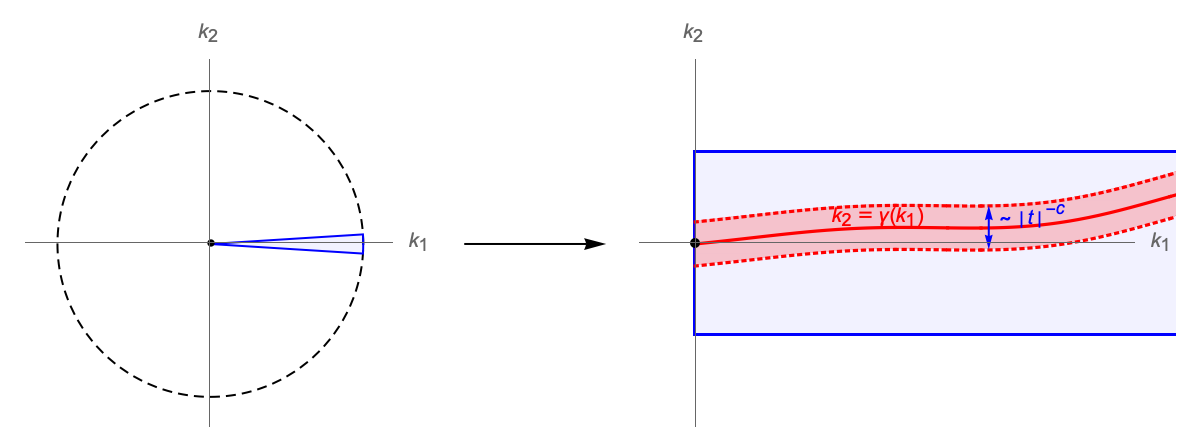}
    \caption{Frequency domain for $\mathbb{C}_{\mathbf{k}\approx\mathbf{K}_\star}(t; v, \theta)$}
    \medskip
    \label{fig: NearDiracCase}
\end{figure}

For the proof, we recall that changing the variable, the sectorial domain of integration for $\mathbb{C}_{\mathbf{k}\approx\mathbf{K}_\star}(t; v, \theta)$ is transformed into the rectangular one (see Figure \ref{fig: NearDiracCase}). To refine the bound in the previous section, we find the curve $k_2=\gamma(k_1)$ where the modified phase function $\tilde{\varphi}(\mathbf{k}; v, \theta)$ is stationary in the $k_2$-direction at the frequency $\mathbf{k}=(k_1, \gamma(k_1))$. Then, near the curve (the red region), we only consider the oscillation in the $k_1$-direction, but the additional decay is obtained from the small measure. On the other hand, away from the curve (the blue region), we take the additional decay by integration by parts in the $k_2$-direction. The $O(|t|^{-c})$-width of the red region will be chosen later to optimize the sum of the two bounds.

For this, using Lemma \ref{lemma: modified phase function asymptotic near the Dirac point}, we differentiate the phase in the $k_2$-direction and write 
\begin{equation}\label{eq: k2 derivative of varphi}
\partial_{k_2}\tilde{\varphi}(\mathbf{k}; v, \theta)=\frac{\sqrt{3}k_1}{2}G(\mathbf{k}),
\end{equation}
where 
\begin{equation}\label{eq: G function}
\begin{aligned}
G(\mathbf{k}):&=k_2+\bigg(-\frac{3b_\theta}{2}-\frac{7a_\theta}{2}  k_2+\frac{15b_\theta}{4} k_2^2\bigg)k_1\\
&\quad+\bigg(\frac{3a_\theta b_\theta}{4}+\frac{3(44a_\theta^2-3)}{32} k_2-\frac{87}{8}a_\theta b_\theta k_2^2\bigg)k_1^2+k_1^3\mathcal{O}_0(\mathbf{k})+k_2^3\tilde{\mathcal{O}}_0(\mathbf{k}).
\end{aligned}
\end{equation}
By the implicit function theorem, we determine the frequencies where $G(\mathbf{k})$ vanishes, which is equivalent to $\partial_{k_2}\tilde{\varphi}(\mathbf{k}; v, \theta)=0$ when $k_1>0$.

\begin{lemma}\label{lemma: construction of curve gamma}
There exist small $\delta_0>0$ and an analytic curve $\gamma: (-\delta_0, \delta_0)\to\mathbb{R}$ such that if $|k_1|<\delta_0$, then $G(k_1, \gamma(k_1))=0$ and 
$$\gamma(k_1)=\frac{3b_\theta}{2}k_1+\frac{9a_\theta b_\theta}{2}k_1^2+\mathcal{O}_3(k_1),$$
where $\mathcal{O}_3(k_1)$ is an analytic function of $k_1$ near the origin such that $|\mathcal{O}_3(k_1)|\lesssim |k_1|^3$ (see \eqref{eq: rm}).
\end{lemma}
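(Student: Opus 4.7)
The plan is to apply the analytic implicit function theorem to the function $G(\mathbf{k})$ defined in \eqref{eq: G function} and then determine the first two Taylor coefficients of the resulting curve by direct substitution.

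First, I will verify the hypotheses of the analytic implicit function theorem. From the explicit form of $G$ in \eqref{eq: G function}, the constant term vanishes and the linear term in $k_2$ has coefficient $1$, so $G(0,0)=0$ and $\partial_{k_2}G(0,0)=1\neq 0$. Since $G$ is manifestly analytic near the origin (it is a polynomial in $k_1,k_2$ plus analytic remainders $k_1^3\mathcal{O}_0(\mathbf{k})$ and $k_2^3\tilde{\mathcal{O}}_0(\mathbf{k})$), the analytic implicit function theorem produces $\delta_0>0$ and an analytic map $\gamma:(-\delta_0,\delta_0)\to\mathbb{R}$ with $\gamma(0)=0$ and $G(k_1,\gamma(k_1))=0$ for $|k_1|<\delta_0$.

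Next, I will extract the Taylor expansion of $\gamma$ by writing $\gamma(k_1)=c_1k_1+c_2k_1^2+\mathcal{O}_3(k_1)$ and plugging into the identity $G(k_1,\gamma(k_1))=0$. Because $\gamma(k_1)=\mathcal{O}_1(k_1)$, any term of the form $k_1^a\gamma(k_1)^b$ contributes only at order $k_1^{a+b}$ or higher; in particular $\gamma(k_1)^2=c_1^2k_1^2+\mathcal{O}_3(k_1)$, $\gamma(k_1)^3=\mathcal{O}_3(k_1)$, and the remainder $k_2^3\tilde{\mathcal{O}}_0(\mathbf{k})$ evaluated at $\mathbf{k}=(k_1,\gamma(k_1))$ is $\mathcal{O}_3(k_1)$. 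Collecting the $k_1$-coefficient of $G(k_1,\gamma(k_1))$ yields
\begin{equation*}
c_1-\tfrac{3b_\theta}{2}=0,
\end{equation*}
so $c_1=\frac{3b_\theta}{2}$, and collecting the $k_1^2$-coefficient yields
\begin{equation*}
c_2-\tfrac{7a_\theta}{2}c_1+\tfrac{3a_\theta b_\theta}{4}=0,
\end{equation*}
so $c_2=\frac{7a_\theta}{2}\cdot\frac{3b_\theta}{2}-\frac{3a_\theta b_\theta}{4}=\frac{21 a_\theta b_\theta}{4}-\frac{3a_\theta b_\theta}{4}=\frac{9a_\theta b_\theta}{2}$. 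This reproduces exactly the expansion asserted in the lemma.

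There is no real obstacle here; the computation is routine once one observes that the coefficient of $k_2$ at linear order in $G$ is exactly $1$, which is what makes the implicit function theorem applicable in the $k_2$-direction and what keeps the combinatorics of the substitution tractable. The only mildly delicate point is bookkeeping the remainder classes $\mathcal{O}_m(\mathbf{k})$: I would be careful to note that a product $k_1^a k_2^b\,\mathcal{O}_m(\mathbf{k})$ becomes $\mathcal{O}_{a+b+m}(k_1)$ after substituting $k_2=\gamma(k_1)=\mathcal{O}_1(k_1)$, which justifies lumping all the $\ge k_1^3$ contributions into a single $\mathcal{O}_3(k_1)$ term.
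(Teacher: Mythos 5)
Your proof is correct and follows essentially the same strategy as the paper: apply the implicit function theorem in the $k_2$-direction (using $G(\mathbf{0})=0$ and $\partial_{k_2}G(\mathbf{0})=1$) and then determine $d_1,d_2$ by matching Taylor coefficients in $G(k_1,\gamma(k_1))=0$, arriving at the same values. The one minor difference is that you invoke the analytic implicit function theorem directly, whereas the paper applies the smooth version and then separately argues analyticity of $\gamma$ by bounding its successive derivatives; your route is slightly cleaner but equivalent.
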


\begin{proof}
Note that $G(\mathbf{0})=0$ and $\partial_{k_2}G(\mathbf{k})\geq\frac{1}{2}$ near the origin, because
\begin{equation}\label{eq: k2 derivative of G}
\begin{aligned}
\partial_{k_2}G(\mathbf{k})&=1+\bigg(-\frac{7a_\theta}{2}  +\frac{15b_\theta}{2} k_2\bigg)k_1+\bigg(\frac{3(44a_\theta^2-3)}{32} -\frac{87}{4}a_\theta b_\theta k_2\bigg)k_1^2\\
&\quad+k_1^3\mathcal{O}_0(\mathbf{k})+k_2^2\tilde{\mathcal{O}}_0(\mathbf{k}).
\end{aligned}
\end{equation}
Therefore, by the implicit function theorem, there exist small $\delta_0>0$ and $\gamma:(-\delta_0, \delta_0)\to\mathbb{R}$ such that $G(k_1, \gamma(k_1))=0$ and $\gamma'(k_1)=-\frac{(\partial_{k_1}G)(k_1, \gamma(k_1))}{(\partial_{k_2}G)(k_1, \gamma(k_1))}$. Note that $|\gamma'(k_1)|\lesssim1$, because $|\partial_{k_1}G(\mathbf{k})|\lesssim 1$ and $\partial_{k_2}G(\mathbf{k})\geq\frac{1}{2}$ near the origin. Subsequently, since $\partial_{k_1}G(\mathbf{k})$ is analytic and $\partial_{k_2}G(\mathbf{k})\geq\frac{1}{2}$, differentiating $\gamma'(k_1)=-\frac{(\partial_{k_1}G)(k_1, \gamma(k_1))}{(\partial_{k_2}G)(k_1, \gamma(k_1))}$, we obtain $|\gamma''(k_1)|\lesssim1$. Repeating one can show that $|\gamma^{(j)}(k_1)|\lesssim_j1$ whose implicit constant grows at most polynomially. Hence, $\gamma(k_1)$ is analytic near the origin. Subsequently, in principle, plugging the series expansion for $\gamma(k_1)=\sum_{m=1}^\infty d_m k_1^m$ into the equation 
$$\begin{aligned}
0&=G(k_1, \gamma(k_1))\\
&=\gamma(k_1)+\bigg(-\frac{3b_\theta}{2}-\frac{7a_\theta}{2}\gamma(k_1)+\frac{15b_\theta}{4} \gamma(k_1)^2\bigg)k_1\\
&\quad+\bigg(\frac{3a_\theta b_\theta}{4}+\frac{3(44a_\theta^2-3)}{32} \gamma(k_1)-\frac{87}{8}a_\theta b_\theta \gamma(k_1)^2\bigg)k_1^2+k_1^3\mathcal{O}_0(\mathbf{k})+\gamma(k_1)^3\tilde{\mathcal{O}}_0(\mathbf{k}),
\end{aligned}$$
one can determine the coefficients $d_m$. Indeed, collecting all cubic and higher-order terms in $\mathcal{O}_3(k_1)$, the above equation can be written as 
$$\begin{aligned}
0&=d_1k_1+d_2k_1^2+\bigg(-\frac{3b_\theta}{2}-\frac{7a_\theta}{2}  d_1k_1\bigg)k_1+\frac{3a_\theta b_\theta}{4}k_1^2+\mathcal{O}_3(k_1)\\
&=\bigg(d_1-\frac{3b_\theta}{2}\bigg)k_1+\bigg(d_2-\frac{7a_\theta}{2}  d_1+\frac{3a_\theta b_\theta}{4}\bigg)k_1^2+\mathcal{O}_3(k_1).
\end{aligned}$$
Therefore, it follows that $d_1=\frac{3b_\theta}{2}$ and $d_2=\frac{9a_\theta b_\theta}{2}$.
\end{proof}

Next, truncating around the curve $k_2=\gamma(k_1)$ (see Figure \ref{fig: NearDiracCase}), we decompose 
\begin{equation}\label{eq: C Dirac decomposition}
\begin{aligned}
\mathbb{C}_{\mathbf{k}\approx\mathbf{K}_\star}(t; v, \theta)&=\sum_{j=0,1}e^{i\mathbf{K}_\star\cdot\mathbf{v}}\int_0^\infty \int_{-\infty}^\infty e^{-it\tilde{\varphi}(\mathbf{k}; v, \theta)} \tilde{\chi}_\delta(\mathbf{k})\chi_j\bigg(\frac{k_2-\gamma(k_1)}{|t|^{-c}}\bigg)k_1 dk_2dk_1\\
&=:\sum_{j=0,1} \mathbb{C}^j_{\mathbf{k}\approx\mathbf{K}_\star}(t; v, \theta)
\end{aligned}
\end{equation}
where $\chi_0$ and $\chi_1$ are given in \eqref{eq: chi0} and \eqref{eq: chi1} and $c>0$ will be chosen later. For $\mathbb{C}^0_{\mathbf{k}\approx\mathbf{K}_\star}(t; v, \theta)$, a decay bound is obtained only from the oscillation for the $k_1$-variable.

\begin{lemma}[Bound for $\mathbb{C}^0_{\mathbf{k}\approx\mathbf{K}_\star}(t; v, \theta)$]\label{lemma: bound for C_0 Dirac}
$$\begin{aligned}
\sup_{v>0\textup{ and }\theta\in[0,2\pi)} |\mathbb{C}^0_{\mathbf{k}\approx\mathbf{K}_\star}(t; v, \theta)|&\lesssim(1+|t|)^{-\frac{1}{3}-c},\\
\sup_{v>0\textup{ and }|\theta-\frac{\pi n}{3}|\geq \epsilon}|\mathbb{C}^0_{\mathbf{k}\approx\mathbf{K}_\star}(t; v, \theta)|&\lesssim_{\epsilon}(1+|t|)^{-\frac{1}{2}-c}.
\end{aligned}$$
\end{lemma}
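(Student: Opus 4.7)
The plan is to change variables $\ell = (k_2 - \gamma(k_1))|t|^c$ in the $k_2$-direction, producing a $|t|^{-c}$ Jacobian factor and restricting $\ell$ to $[-1,1]$, then apply Fubini and estimate the resulting $k_1$-integral by van der Corput's lemma, uniformly in $\ell$. The $|t|^{-c}$ loss from the narrow $k_2$-window must be compensated by $|t|^{-1/3}$ (general) or $|t|^{-1/2}$ (when $|a_\theta|\gtrsim_\epsilon 1$) from the $k_1$-oscillation.

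The key algebraic step is to identify the phase restricted to the stationary curve, $\Phi(k_1) := \tilde{\varphi}(k_1, \gamma(k_1); v, \theta)$. Substituting $\gamma(k_1) = \tfrac{3 b_\theta}{2} k_1 + \tfrac{9 a_\theta b_\theta}{2} k_1^2 + \mathcal{O}_3(k_1)$ from Lemma \ref{lemma: construction of curve gamma} into the expansion from Lemma \ref{lemma: modified phase function asymptotic near the Dirac point} and using $a_\theta^2 + b_\theta^2 = \tfrac{1}{12}$ from Lemma \ref{lemma: phase function asymptotic near the Dirac point} to reduce the $k_1^2$-coefficient inside the braces from $-\tfrac{9 b_\theta^2}{8} - \tfrac{1}{32} - \tfrac{a_\theta^2}{8}$ down to $a_\theta^2 - \tfrac{1}{8}$, I expect
\begin{equation*}
  \Phi(k_1) = \bigl(\tfrac{\sqrt{3}}{2} - v\bigr) k_1 + \tfrac{\sqrt{3} a_\theta}{4} k_1^2 + \tfrac{\sqrt{3}}{2}\bigl(a_\theta^2 - \tfrac{1}{8}\bigr) k_1^3 + \mathcal{O}_4(k_1).
\end{equation*}
Because $a_\theta^2 \leq a_\theta^2 + b_\theta^2 = \tfrac{1}{12} < \tfrac{1}{8}$, the leading cubic coefficient satisfies $\Phi'''(0) = 3\sqrt{3}(a_\theta^2 - \tfrac{1}{8}) \in [-\tfrac{3\sqrt{3}}{8}, -\tfrac{\sqrt{3}}{8}]$, hence $|\Phi'''(k_1)| \gtrsim 1$ uniformly in $\theta$ for sufficiently small $k_1$, while $|\Phi''(k_1)| = |\tfrac{\sqrt{3} a_\theta}{2} + O(k_1)| \gtrsim_\epsilon 1$ whenever $|a_\theta| \gtrsim_\epsilon 1$, the latter being equivalent to $|\theta - \tfrac{\pi n}{3}| \geq \epsilon$ by Remark \ref{remark: a theta lower bound}.

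Next, for the perturbed phase $\Psi_\ell(k_1) := \tilde{\varphi}(k_1, \gamma(k_1) + |t|^{-c}\ell; v, \theta)$, a Taylor expansion in $|t|^{-c}\ell$ about $k_2 = \gamma(k_1)$ gives $\Psi_\ell^{(j)}(k_1) - \Phi^{(j)}(k_1) = O(|t|^{-c})$ for $j = 2,3$ uniformly in $\ell \in [-1,1]$, so the above lower bounds persist for $|t|$ large. Applying van der Corput's lemma \cite{Stein} to the $k_1$-integral with amplitude $\phi(k_1) := k_1\, \tilde{\chi}_\delta(k_1, \gamma(k_1) + |t|^{-c}\ell)$, whose total variation is uniformly bounded (the factor $k_1$ ensures $\phi(0) = 0$), yields $O(|t|^{-1/3})$ in the general case from the third-derivative bound, and $O(|t|^{-1/2})$ under $|a_\theta| \gtrsim_\epsilon 1$ from the second-derivative bound. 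Multiplying by the $|t|^{-c}$ Jacobian and the trivial $O(1)$ bound on the $\ell$-integral yields the claimed estimates.

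The main obstacle I anticipate is the verification of the uniform lower bound on $\Phi'''(0)$, which depends critically on the cancellation $b_\theta^2 = \tfrac{1}{12} - a_\theta^2$; without this identity, the sign of the cubic coefficient would a priori depend on $\theta$ and could vanish, producing an additional direction of degeneracy and necessitating a higher-order stationary-phase analysis.
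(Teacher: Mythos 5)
Your proposal is correct and takes essentially the same route as the paper's proof. The paper shifts $k_2 \mapsto k_2 + \gamma(k_1)$, applies Fubini with the inner $k_1$-integral, derives the expansion \eqref{eq: translated modified phase function} of $\tilde{\tilde{\varphi}}(\mathbf{k}) = \tilde{\varphi}(k_1, k_2 + \gamma(k_1); v,\theta)$ — obtaining exactly your leading cubic coefficient $-\frac{\sqrt{3}}{2}(\frac{1}{8}-a_\theta^2)$ via the same cancellation from $a_\theta^2+b_\theta^2 = \frac{1}{12}$ — then applies van der Corput in $k_1$ using $|\partial_{k_1}^2\tilde{\tilde{\varphi}}| \sim 1$ (when $|a_\theta|\gtrsim_\epsilon 1$) or $|\partial_{k_1}^3\tilde{\tilde{\varphi}}| \sim 1$ (always, because $a_\theta^2 \le \frac{1}{12} < \frac{1}{8}$), and finally picks up $|t|^{-c}$ from the narrow $k_2$-window. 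Your explicit rescaling $\ell = (k_2-\gamma(k_1))|t|^c$ and the separation of $\Psi_\ell$ from $\Phi$ with an $O(|t|^{-c})$ error are merely cosmetic reorganizations of the paper's uniform-in-$k_2$ derivative bounds \eqref{eq: double tilde varphi 2nd derivative}--\eqref{eq: double tilde varphi 3rd derivative}; the substance — curve restriction, the algebraic identity that forces $\Phi'''(0)$ to have a fixed sign, van der Corput in $k_1$, times the small $k_2$-measure — is identical.
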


\begin{proof}
We may assume that $|t|\geq 1$, since $\textup{C}_{\leq\delta, (0)}^{\textup{Dirac}}(t,v, \theta)$ is bounded. For the proof, by Fubini's theorem, we write 
\begin{equation}\label{eq: C Dirac 0}
\mathbb{C}^0_{\mathbf{k}\approx\mathbf{K}_\star}(t; v, \theta)=e^{i\mathbf{K}_\star\cdot\mathbf{v}}\int_{-\infty}^\infty\bigg\{\int_0^\infty e^{-it\tilde{\tilde{\varphi}}(\mathbf{k})} \tilde{\tilde{\chi}}_\delta(\mathbf{k})k_1 dk_1\bigg\} \chi_0\bigg(\frac{k_2}{|t|^{-c}}\bigg)dk_2,
\end{equation}
where 
\begin{equation}\label{eq: double tilde varphi}
\tilde{\tilde{\varphi}}(\mathbf{k}):=\tilde{\varphi}(k_1, k_2+\gamma(k_1); v,\theta)
\end{equation}
and
\begin{equation}\label{eq: double tilde chi}
\tilde{\tilde{\chi}}_\delta(\mathbf{k}):=\tilde{\chi}_\delta(k_1, k_2+\gamma(k_1)).
\end{equation}
Note here that by Remark \ref{remark: chi delta support} and Lemma \ref{lemma: construction of curve gamma}, $\tilde{\tilde{\chi}}_\delta$ is smooth and is supported in $[-20\delta, 20\delta]^2$. For the phase function, plugging $(k_1, k_2+\gamma(k_1))$ in Lemma \ref{lemma: modified phase function asymptotic near the Dirac point}, we obtain that 
$$\begin{aligned}
\tilde{\tilde{\varphi}}(\mathbf{k})&=\frac{\sqrt{3}k_1}{2}\bigg\{1-\frac{2}{\sqrt{3}}v+\frac{1}{2}(k_2+\gamma(k_1))^2\\
&\qquad\qquad+\bigg(\frac{a_\theta}{2}-\frac{3b_\theta}{2} (k_2+\gamma(k_1))-\frac{7a_\theta}{4}  (k_2+\gamma(k_1))^2+\frac{5b_\theta}{4} (k_2+\gamma(k_1))^3\bigg)k_1\\
&\qquad\qquad+\bigg(-\frac{1}{32}-\frac{a_\theta^2}{8}+\frac{3a_\theta b_\theta}{4}(k_2+\gamma(k_1))+\frac{3(44a_\theta^2-3)}{64} (k_2+\gamma(k_1))^2\\
&\qquad\qquad\qquad-\frac{29}{8}a_\theta b_\theta (k_2+\gamma(k_1))^3\bigg)k_1^2+(k_1^3+(k_2+\gamma(k_1))^4)\mathcal{O}_0(k_1, k_2+\gamma(k_1))\bigg\}.
\end{aligned}$$
In the parentheses $\{\cdots\}$, we insert $\gamma(k_1)=\frac{3b_\theta}{2}k_1+\frac{9a_\theta b_\theta}{2}k_1^2+\mathcal{O}_3(k_1)$ (see Lemma \ref{lemma: construction of curve gamma}), and collect all higher-order terms in $\mathcal{O}_3(k_1)+k_2^2 \mathcal{O}_0(\mathbf{k})$. Then, it follows that
\begin{equation}\label{eq: translated modified phase function}
\begin{aligned}
\tilde{\tilde{\varphi}}(\mathbf{k})&=\frac{\sqrt{3}k_1}{2}\bigg\{1-\frac{2}{\sqrt{3}}v+k_2\gamma(k_1)+\frac{\gamma(k_1)^2}{2}\\
&\qquad\qquad+\bigg(\frac{a_\theta}{2}-\frac{3b_\theta}{2} (k_2+\gamma(k_1))-\frac{7a_\theta}{2}k_2\gamma(k_1)\bigg)k_1\\
&\qquad\qquad+\bigg(-\frac{1}{32}-\frac{a_\theta^2}{8}+\frac{3a_\theta b_\theta}{4}k_2\bigg)k_1^2+O_3(k_1)+k_2^2 \mathcal{O}_0(\mathbf{k})\bigg\}\\
&=\frac{\sqrt{3}k_1}{2}\bigg\{1-\frac{2}{\sqrt{3}}v+\frac{a_\theta}{2}k_1-\bigg(\frac{1}{8}-a_\theta^2\bigg)k_1^2+\mathcal{O}_3(k_1)+k_2^2 \mathcal{O}_0(\mathbf{k})\bigg\}\\
&=\bigg(\frac{\sqrt{3}}{2}-v\bigg)k_1+\frac{\sqrt{3}a_\theta}{4}k_1^2-\frac{\sqrt{3}}{2}\bigg(\frac{1}{8}-a_\theta^2\bigg)k_1^3+k_1\mathcal{O}_3(k_1)+k_1k_2^2 \mathcal{O}_0(\mathbf{k}),
\end{aligned}
\end{equation}
where \eqref{eq: a b relation} is used to remove $b_\theta$ in the last step. Note from \eqref{eq: translated modified phase function} that if $k_1>0$ is small enough, then
\begin{equation}\label{eq: double tilde varphi 2nd derivative}
|\partial_{k_1}^2\tilde{\tilde{\varphi}}(\mathbf{k})|=\bigg|\frac{\sqrt{3}a_\theta}{2}-3\sqrt{3}\bigg(\frac{1}{8}-a_\theta^2\bigg)k_1+\mathcal{O}_2(\mathbf{k})\bigg|\sim 1,
\end{equation}
provided that $|a_\theta|\gtrsim 1$. Moreover, for any $\theta\in[0,2\pi)$, we have 
\begin{equation}\label{eq: double tilde varphi 3rd derivative}
|\partial_{k_1}^3\tilde{\tilde{\varphi}}(\mathbf{k})|=\bigg|-3\sqrt{3}\bigg(\frac{1}{8}-a_\theta^2\bigg)+\mathcal{O}_1(\mathbf{k})\bigg|\sim 1,
\end{equation}
since $a_\theta^2\leq \frac{1}{12}$ (see \eqref{eq: a b relation}). Hence, applying the van der Corput lemma \cite{Stein} to the inner integral \eqref{eq: C Dirac 0}, we obtain
$$\bigg|\int_0^\infty e^{-it\tilde{\tilde{\varphi}}(\mathbf{k})} \tilde{\tilde{\chi}}_\delta(\mathbf{k})k_1 dk_1\bigg|\lesssim\left\{\begin{aligned}
&|t|^{-\frac{1}{2}}&&\textup{if }|a_\theta|\gtrsim 1\\
&|t|^{-\frac{1}{3}}&&\textup{otherwise}.
\end{aligned}\right.$$
Therefore, applying this bound to the inner integral in \eqref{eq: C Dirac 0} with $|k_2|\lesssim |t|^{-c}$, we prove the lemma.
\end{proof}

In the integral $\mathbb{C}^1_{\mathbf{k}\approx\mathbf{K}_\star}(t; v, \theta)$, the frequencies such that $\partial_{k_2}\tilde{\varphi}(\mathbf{k}; v, \theta)$ is small are truncated out. Thus, the additional decay is obtained by integration by parts for the $k_2$-variable as in the proof of the preliminary bound (Proposition \ref{prop: preliminary degenerate oscillatory integral near the Dirac point}), but we also capture the oscillation in the $k_1$-direction.

\begin{lemma}[Bound for $\mathbb{C}^1_{\mathbf{k}\approx\mathbf{K}_\star}(t; v, \theta)$]\label{lemma: bound for C_1 Dirac}
$$\begin{aligned}
\sup_{v>0\textup{ and }\theta\in[0,2\pi)} |\mathbb{C}^1_{\mathbf{k}\approx\mathbf{K}_\star}(t; v, \theta)|&\lesssim(1+|t|)^{-(\frac{4}{3}-c)},\\
\sup_{v>0\textup{ and }|\theta-\frac{\pi n}{3}|\geq \epsilon}|\mathbb{C}^1_{\mathbf{k}\approx\mathbf{K}_\star}(t; v, \theta)|&\lesssim_{\epsilon}(1+|t|)^{-(\frac{3}{2}-c)}.
\end{aligned}$$
\end{lemma}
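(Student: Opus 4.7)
The plan is to combine integration by parts in the $k_2$-direction with a van der Corput estimate in $k_1$. First, I would straighten the stationary curve by setting $k_2\mapsto k_2+\gamma(k_1)$, rewriting
$$\mathbb{C}^1_{\mathbf{k}\approx\mathbf{K}_\star}(t;v,\theta)=e^{i\mathbf{K}_\star\cdot\mathbf{v}}\int_0^\infty\!\!\int_{-\infty}^\infty e^{-it\tilde{\tilde{\varphi}}(\mathbf{k})}\tilde{\tilde{\chi}}_\delta(\mathbf{k})\chi_1\!\bigg(\frac{k_2}{|t|^{-c}}\bigg) k_1\, dk_2\, dk_1$$
with $\tilde{\tilde{\varphi}}$, $\tilde{\tilde{\chi}}_\delta$ as in \eqref{eq: double tilde varphi}--\eqref{eq: double tilde chi}. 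From \eqref{eq: translated modified phase function} and the $\tfrac{1}{2}k_2^2$ term in Lemma \ref{lemma: modified phase function asymptotic near the Dirac point}, the phase factors as $\tilde{\tilde{\varphi}}(\mathbf{k})=\tfrac{\sqrt{3}k_1}{2}\bigl(F(k_1)+k_2^2 G(\mathbf{k})\bigr)$ with $F,G$ analytic near the origin and $G(\mathbf{0})=\tfrac12$. Hence $\partial_{k_2}\tilde{\tilde{\varphi}}(\mathbf{k})=\tfrac{\sqrt{3}}{2}k_1 k_2 H(\mathbf{k})$ with $H(\mathbf{0})=1$, so on the support of $\tilde{\tilde{\chi}}_\delta\chi_1$ we have $|\partial_{k_2}\tilde{\tilde{\varphi}}|\gtrsim k_1|k_2|\gtrsim k_1|t|^{-c}$ and integration by parts in $k_2$ is justified.

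After one integration by parts,
$$\mathbb{C}^1_{\mathbf{k}\approx\mathbf{K}_\star}(t;v,\theta)=\frac{i e^{i\mathbf{K}_\star\cdot\mathbf{v}}}{t}\int_0^\infty\!\!\int_{-\infty}^\infty e^{-it\tilde{\tilde{\varphi}}(\mathbf{k})}\,\partial_{k_2}\Psi(k_1,k_2)\, dk_2\, dk_1,$$
where $\Psi=k_1\tilde{\tilde{\chi}}_\delta\chi_1/\partial_{k_2}\tilde{\tilde{\varphi}}=2\tilde{\tilde{\chi}}_\delta\chi_1/(\sqrt{3}k_2 H)$. The central observation is that the explicit $k_1$ in the numerator cancels the $k_1$ factor in $\partial_{k_2}\tilde{\tilde{\varphi}}$, so $\Psi$ is bounded and smooth down to $k_1=0^+$. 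A product-rule calculation then yields, uniformly in $k_1\in(0,\delta]$,
$$|\partial_{k_2}\Psi(k_1,k_2)|+\int_0^\delta|\partial_{k_1}\partial_{k_2}\Psi(k_1',k_2)|\,dk_1'\lesssim |t|^{2c}\mathbf{1}_{|k_2|\sim|t|^{-c}}+\frac{1}{k_2^2}\mathbf{1}_{|t|^{-c}\lesssim|k_2|\lesssim\delta},$$
where the first piece comes from the $k_2$-derivative of $\chi_1$ and the second from the $k_2$-derivative of $k_2^{-1}$ (the other contributions being bounded by the same quantities).

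For each fixed $k_2$ I would apply van der Corput in $k_1$. The bound $|\partial_{k_1}^3\tilde{\tilde{\varphi}}|\sim 1$ from \eqref{eq: double tilde varphi 3rd derivative}, valid for every $\theta$, yields a gain of $|t|^{-1/3}$ via the third-order estimate; when $|a_\theta|\gtrsim 1$ (equivalently $|\theta-\pi n/3|\geq\epsilon$), the stronger bound $|\partial_{k_1}^2\tilde{\tilde{\varphi}}|\sim 1$ from \eqref{eq: double tilde varphi 2nd derivative} upgrades this to $|t|^{-1/2}$. Integrating the pointwise bound in $k_2$,
$$\int\bigl(|t|^{2c}\mathbf{1}_{|k_2|\sim|t|^{-c}}+k_2^{-2}\mathbf{1}_{|t|^{-c}\lesssim|k_2|\lesssim\delta}\bigr)\,dk_2\lesssim|t|^c,$$
whence $|\mathbb{C}^1_{\mathbf{k}\approx\mathbf{K}_\star}|\lesssim|t|^{-1}\cdot|t|^{-1/3}\cdot|t|^c=|t|^{c-4/3}$ in general, improving to $|t|^{c-3/2}$ in the non-degenerate-$a_\theta$ regime.

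The main obstacle is to ensure that integration by parts in $k_2$ introduces no singularity at $k_1=0^+$, since otherwise a $1/k_1$ factor in $\partial_{k_2}\Psi$ would produce a logarithmic divergence over $k_1\in(0,\delta]$ that the $|t|^{-1/3}$ van der Corput gain could not absorb. This is precisely resolved by the algebraic structure $\tilde{\tilde{\varphi}}=\tfrac{\sqrt{3}k_1}{2}(\cdots)$, which forces the cancellation with the $k_1$ factor already sitting in the numerator of $\mathbb{C}^1_{\mathbf{k}\approx\mathbf{K}_\star}$. Verifying that $\partial_{k_1}\partial_{k_2}\Psi$ satisfies the same $k_1$-uniform bound as $\partial_{k_2}\Psi$, as required by the amplitude norm in van der Corput, is the most technically careful step.
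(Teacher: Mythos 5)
Your proof is correct and follows essentially the same route as the paper's: integrate by parts once in $k_2$ using $\partial_{k_2}\tilde{\tilde{\varphi}}=\frac{\sqrt{3}}{2}k_1 k_2 H(\mathbf{k})$ (the explicit $k_1$ in the integrand cancels the $k_1$ in $\partial_{k_2}\tilde{\tilde{\varphi}}$, so the new amplitude $\Psi$ has no singularity at $k_1=0$), then apply van der Corput in $k_1$ slice-by-slice with the second- or third-derivative bounds \eqref{eq: double tilde varphi 2nd derivative}--\eqref{eq: double tilde varphi 3rd derivative}, and finally integrate the amplitude's total-variation bound in $k_2$ to pick up the $|t|^{c}$ loss. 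The only cosmetic difference is that you straighten the stationary curve by $k_2\mapsto k_2+\gamma(k_1)$ before integrating by parts, whereas the paper integrates by parts first and then shifts; the two orders produce the identical integrand.
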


\begin{proof}
Again, we may assume that $|t|\geq1$. By integration by parts with $\partial_{k_2}\tilde{\varphi}(\mathbf{k}; v,\theta)=\frac{\sqrt{3}k_1}{2}G(\mathbf{k})$ (see \eqref{eq: k2 derivative of varphi}) and distributing the derivative, $\mathbb{C}^1_{\mathbf{k}\approx\mathbf{K}_\star}(t; v, \theta)$ can be written as  
$$\begin{aligned}
\mathbb{C}^1_{\mathbf{k}\approx\mathbf{K}_\star}(t; v, \theta)&=-\frac{e^{i\mathbf{K}_\star\cdot\mathbf{v}}}{it}\int_0^\infty\int_{-\infty}^\infty \partial_{k_2}(e^{-it\tilde{\varphi}(\mathbf{k}; v, \theta)})\Bigg\{\frac{\tilde{\chi}_\delta(\mathbf{k})\chi_1(\frac{k_2-\gamma(k_1)}{|t|^{-c}})}{\frac{\sqrt{3}k_1}{2}G(\mathbf{k})}\Bigg\}k_1 dk_2dk_1\\
&=\frac{2e^{i\mathbf{K}_\star\cdot\mathbf{v}}}{i\sqrt{3}t}\int_0^\infty\int_{-\infty}^\infty e^{-it\tilde{\varphi}(\mathbf{k}; v, \theta)}\partial_{k_2}\Bigg\{\frac{\tilde{\chi}_\delta(\mathbf{k})\chi_1(\frac{k_2-\gamma(k_1)}{|t|^{-c}})}{G(\mathbf{k})}\Bigg\}dk_2dk_1\\
&=\frac{2e^{i\mathbf{K}_\star\cdot\mathbf{v}}}{i\sqrt{3}t}\int_{-\infty}^\infty\Bigg[\int_0^\infty e^{-it\tilde{\tilde{\varphi}}(\mathbf{k}; v, \theta)}\partial_{k_2}\Bigg\{\frac{\tilde{\tilde{\chi}}_\delta(\mathbf{k})\chi_1(\frac{k_2}{|t|^{-c}})}{G(k_1, \gamma(k_1)+k_2)}\Bigg\}dk_1\Bigg]dk_2,
\end{aligned}$$
where $\tilde{\tilde{\varphi}}(\mathbf{k})$ and $\tilde{\tilde{\chi}}_\delta(\mathbf{k})$ are defined in \eqref{eq: double tilde varphi} and \eqref{eq: double tilde chi}. Hence, using the lower bounds for the second and the third derivatives of $\tilde{\varphi}(k_1, k_2+\gamma(k_1))$ depending on $\theta$ (see \eqref{eq: double tilde varphi 2nd derivative} and \eqref{eq: double tilde varphi 3rd derivative}) and employing the van der Corput lemma \cite{Stein} for the inner $k_1$-integral as in the proof of Lemma \ref{lemma: bound for C_0 Dirac}, we obtain 
$$|\mathbb{C}^1_{\mathbf{k}\approx\mathbf{K}_\star}(t; v, \theta)|\lesssim\left\{\begin{aligned}
&\mathcal{R}(t)|t|^{-\frac{3}{2}}&&\textup{if }|\theta-\tfrac{\pi n}{3}|\geq \epsilon\\
&\mathcal{R}(t)|t|^{-\frac{4}{3}}&&\textup{otherwise},
\end{aligned}\right.$$
where
$$\mathcal{R}(t):=\int_{-\infty}^\infty\int_0^\infty \bigg|\partial_{k_1}\partial_{k_2}\bigg\{\frac{\tilde{\tilde{\chi}}_\delta(\mathbf{k})\chi_1(\frac{k_2}{|t|^{-c}})}{G(k_1, \gamma(k_1)+k_2)}\bigg\}\bigg|dk_1dk_2.$$
Therefore, it is enough to show that 
\begin{equation}\label{eq: R(t) bound}
\mathcal{R}(t)\lesssim|t|^{c}.
\end{equation}
Indeed, we observe that $\tilde{\tilde{\chi}}_\delta(\mathbf{k})$ is smooth and is supported in the area $[-3\delta, 3\delta]^2$. Thus, it follows that 
$$\begin{aligned}
|\mathcal{R}(t)|&\lesssim\int_{-3\delta}^{3\delta}\int_0^{3\delta} \bigg|\frac{\chi_1(\frac{k_2}{|t|^{-c}})}{G(k_1, \gamma(k_1)+k_2)}\bigg|+\bigg|\nabla_{\mathbf{k}}\bigg(\frac{\chi_1(\frac{k_2}{|t|^{-c}})}{G(k_1, \gamma(k_1)+k_2)}\bigg)\bigg|\\
&\qquad\qquad\qquad\qquad\qquad\qquad\quad\ \ +\bigg|\partial_{k_1}\partial_{k_2}\bigg(\frac{\chi_1(\frac{k_2}{|t|^{-c}})}{G(k_1, \gamma(k_1)+k_2)}\bigg)\bigg|dk_1dk_2.
\end{aligned}$$
Note that by the fact that
\begin{equation}\label{eq: G k_2 derivative}
\partial_{k_2}G(\mathbf{k})\sim 1
\end{equation}
(see \eqref{eq: k2 derivative of G}), the fundamental theorem of calculus with $G(k_1, \gamma(k_1))=0$ yields 
\begin{equation}\label{eq: G comparability}
\big|G(k_1, \gamma(k_1)+k_2)\big|=\bigg|\int_0^{k_2}\partial_{k_2}G(k_1, \gamma(k_1)+s) ds\bigg|\sim |k_2|.
\end{equation}
Moreover, repeating the calculations in \eqref{eq: translated modified phase function} (see  \eqref{eq: k2 derivative of varphi} for the definition of $G(\mathbf{k})$), one can show that 
\begin{equation}\label{eq: G k_1 derivative}
\begin{aligned}
&\partial_{k_1}\big(G(k_1, \gamma(k_1)+k_2)\big)\\
&=\partial_{k_1}\partial_{k_2}\bigg\{1-\frac{2}{\sqrt{3}}v+\frac{a_\theta}{2}k_1-\bigg(\frac{1}{8}-a_\theta^2\bigg)k_1^2+\mathcal{O}_3(k_1)+k_2^2 \mathcal{O}_0(\mathbf{k})\bigg\}=k_2\mathcal{O}_0(\mathbf{k})
\end{aligned}
\end{equation}
and
\begin{equation}\label{eq: G k_1 k_2 derivative}
\partial_{k_1}\partial_{k_2}\big(G(k_1, \gamma(k_1)+k_2)\big)=\mathcal{O}_0(\mathbf{k}).
\end{equation}
Therefore, by \eqref{eq: G k_2 derivative}, \eqref{eq: G comparability} and \eqref{eq: G k_1 derivative}, we have
$$\begin{aligned}
&\bigg|\frac{\chi_1(\frac{k_2}{|t|^{-c}})}{G(k_1, \gamma(k_1)+k_2)}\bigg|+\bigg|\nabla_{\mathbf{k}}\bigg(\frac{\chi_1(\frac{k_2}{|t|^{-c}})}{G(k_1, \gamma(k_1)+k_2)}\bigg)\bigg|\\
&\leq \chi_1\bigg(\frac{k_2}{|t|^{-c}}\bigg)\bigg\{\frac{1}{|G(k_1, \gamma(k_1)+k_2)|}+\frac{|\nabla_{\mathbf{k}}(G(k_1, \gamma(k_1)+k_2))|}{G(k_1, \gamma(k_1)+k_2)^2}\bigg\}+\frac{|\frac{1}{|t|^{-c}}\chi_0'(\frac{k_2}{|t|^{-c}})|}{G(k_1, \gamma(k_1)+k_2)}\\
&\lesssim \frac{\chi_1(\frac{k_2}{|t|^{-c}})}{|k_2|}+ \frac{\chi_1(\frac{k_2}{|t|^{-c}})}{k_2^2}+\frac{|\frac{1}{|t|^{-c}}\chi_0'(\frac{k_2}{|t|^{-c}})|}{|k_2|}\lesssim |t|^c+\frac{\mathbbm{1}_{|k_2|\gtrsim |t|^{-c}}}{k_2^2}+|t|^{2c}\mathbbm{1}_{|k_2|\sim |t|^{-c}}.
\end{aligned}$$
On the other hand, applying \eqref{eq: G k_2 derivative}, \eqref{eq: G comparability}, \eqref{eq: G k_1 derivative} and \eqref{eq: G k_1 k_2 derivative} to 
$$\begin{aligned}
\partial_{k_1}\partial_{k_2}\bigg(\frac{\chi_1(\frac{k_2}{|t|^{-c}})}{G(k_1, \gamma(k_1)+k_2)}\bigg)&=\chi_1\bigg(\frac{k_2}{|t|^{-c}}\bigg)\bigg\{-\frac{\partial_{k_1}\partial_{k_2}(G(k_1, \gamma(k_1)+k_2))}{G(k_1, \gamma(k_1)+k_2)^2}\\
&\qquad\qquad\qquad +\frac{\partial_{k_1}(G(k_1, \gamma(k_1)+k_2)) \partial_{k_2}(G(k_1, \gamma(k_1)+k_2))}{G(k_1, \gamma(k_1)+k_2)^3}\bigg\}\\
&\quad-\frac{\partial_{k_1}(G(k_1, \gamma(k_1)+k_2))\frac{1}{|t|^{-c}}\chi_0'(\frac{k_2}{|t|^{-c}})}{G(k_1, \gamma(k_1)+k_2)^2},
\end{aligned}$$
we obtain that 
$$\bigg|\partial_{k_1}\partial_{k_2}\bigg(\frac{\chi_1(\frac{k_2}{|t|^{-c}})}{G(k_1, \gamma(k_1)+k_2)}\bigg)\bigg|\lesssim\frac{\mathbbm{1}_{|k_2|\gtrsim |t|^{-c}}}{k_2^2}+|t|^{2c}\mathbbm{1}_{|k_2|\sim |t|^{-c}},$$
where for the second upper bound, we used that $\partial_{k_2}G(\mathbf{k})>0$ (see \eqref{eq: G k_2 derivative}). Therefore, collecting all, we prove that 
$$|\mathcal{R}(t)|\lesssim\int_{-3\delta}^{3\delta}\int_0^{3\delta} \bigg\{|t|^c+\frac{\mathbbm{1}_{|k_2|\gtrsim |t|^{-c}}}{k_2^2}+|t|^{2c}\mathbbm{1}_{|k_2|\sim |t|^{-c}}\bigg\}dk_2dk_1\sim |t|^c,$$
where in the last step, we used that by the mean value theorem, $G(k_1, \gamma(k_1)\pm C|t|^{-c})=G(k_1, \gamma(k_1))\pm \partial_{k_2}G(k_1, k_2^*)C|t|^{-c}\sim \pm |t|^{-c}$ as well as $G(k_1, \gamma(k_1))=0$ and $\partial_{k_2}G(\mathbf{k})\sim 1$. This completes the proof of \eqref{eq: R(t) bound}.
\end{proof}

Finally, we are ready to prove the main result of this section.

\begin{proof}[Proof of Proposition \ref{prop: refined degenerate oscillatory integral near the Dirac point}]
We apply Lemma \ref{lemma: bound for C_0 Dirac} and \ref{lemma: bound for C_1 Dirac} to the decomposition \eqref{eq: C Dirac decomposition}. Then, choosing $c>0$ optimizing the bound, we prove Proposition \ref{prop: refined degenerate oscillatory integral near the Dirac point}.
\end{proof}

\section{Oscillatory integral localized at a Dirac point at an intersection of two degenerate frequency curves}\label{sec: oscillatory integral localized at a Dirac point at an intersection of two degenerate frequency curves}

Next, we show Theorem \ref{thm: oscillatory integral estimates} for $\mathbf{K}_2\in\mathcal{K}_2'$, that is, one of the intersections of two degenerate frequency curves (see Figure \ref{fig: first primitive cell}).

\begin{theorem}[Oscillatory integral estimate; $\mathbf{K}=\mathbf{K}_2$]\label{thm: near an intersection of two curves}
For $\mathbf{K}_2=(0,\pi)$, there exist $C_2>0$ and a smooth cut-off $\chi_{\leq\delta}$, whose support is contained in a sufficiently small disk of radius $\delta>0$, such that for any $\mathbf{v}\in\mathbb{R}^2$, 
$$|\mathbb{I}_{\mathbf{k}\approx\mathbf{K}_2}(t;\mathbf{v})|\leq\frac{C_2}{(1+|t|)^{2/3}}.
$$
\end{theorem}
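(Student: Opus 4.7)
The plan is to follow the strategy of Section~\ref{sec: dispersion near a Dirac point} --- expand the phase, identify the degenerate direction, decompose the frequency domain around the critical curve, and combine van der Corput estimates with integration by parts. A key simplification over the Dirac-point case is that $\varphi$ is smooth at $\mathbf{K}_2$, so no sectorial change of variables is required.

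First, I would establish the Taylor expansion of $\varphi$ near $\mathbf{K}_2 = (0,\pi)$. Evaluating \eqref{eq: alpha function notations} there gives $\alpha_1(\mathbf{K}_2) = \alpha_2(\mathbf{K}_2) = 0$, $\alpha_{12}(\mathbf{K}_2) = 1$, and $\varphi(\mathbf{K}_2) = 1$, so Theorem~\ref{thm: gradient and hessian} implies $\nabla^2\varphi(\mathbf{K}_2) = 0$. Starting from the angle-sum identity
\[
\varphi(\mathbf{K}_2 + \mathbf{k})^2 = 3 - 4\cos\!\Big(\tfrac{\sqrt{3}\,k_1}{2}\Big)\sin\!\Big(\tfrac{k_2}{2}\Big) - 2\cos k_2,
\]
Taylor-expanding in $\mathbf{k}$ and applying $\sqrt{1+y} = 1 + y/2 - y^2/8 + \cdots$ produces
\[
\varphi(\mathbf{K}_2 + \mathbf{k}) = 1 - k_2 + P_3(\mathbf{k}) + O(|\mathbf{k}|^4),\qquad P_3(\mathbf{k}) := \frac{k_2(9k_1^2 + k_2^2)}{24}.
\]
After translating by $\mathbf{K}_2$ and absorbing the constant $1$ into a modulus-one factor and the linear term $-k_2$ into $\mathbf{v}\cdot\mathbf{k}$ by setting $\tilde{\mathbf{v}} := (v_1, v_2+1)$, Theorem~\ref{thm: near an intersection of two curves} reduces to the uniform bound
\[
\sup_{\tilde{\mathbf{v}}\in\mathbb{R}^2}\Big|\int_{\mathbb{R}^2} e^{-it(-\tilde{\mathbf{v}}\cdot\mathbf{k} + P_3(\mathbf{k}) + O(|\mathbf{k}|^4))}\chi_{\leq\delta}(\mathbf{k})\,d\mathbf{k}\Big| \lesssim (1+|t|)^{-2/3}.
\]

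Next I would exploit the cubic structure of $P_3$. The relevant derivatives are $\partial_{k_2}^3 P_3 = 1/4$ (uniformly nonvanishing), $\partial_{k_1}^2 P_3 = 3k_2/4$ (vanishing only on $\{k_2 = 0\}$), and $\partial_{k_1}^2\partial_{k_2} P_3 = 3/4 \ne 0$. A van der Corput estimate in the $k_2$-direction using the nonvanishing third derivative already yields the preliminary bound $O(|t|^{-1/3})$ (analogous to Proposition~\ref{prop: preliminary degenerate oscillatory integral near the Dirac point}). To refine to $O(|t|^{-2/3})$, I would imitate Section~\ref{sec: refined degenerate oscillatory integral estimate}: locate the critical curve $\partial_{k_2}\Phi(\mathbf{k};\tilde{\mathbf{v}}) = 0$ by the implicit function theorem (when it is nonempty), truncate with the factor $\chi_0\!\big(\tfrac{k_2 - \gamma(k_1)}{|t|^{-c}}\big)$, and split accordingly. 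Inside the band, I would bound the $k_2$-integral by the width $|t|^{-c}$ and apply van der Corput in $k_1$ using $|\partial_{k_1}^2\Phi|\sim|\gamma(k_1)|$; outside the band, one integration by parts in $k_2$ divides by $|\partial_{k_2}\Phi|\gtrsim|t|^{-c}$, and the residual $k_1$-integral is again handled by van der Corput. Optimizing $c$ gives the exponent $2/3$; the rate is slower than the Dirac-point rate $5/6$ because $\partial_{k_1}^3 P_3 = 0$, so the $k_1$-direction van der Corput can only use the second derivative rather than the third.

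The main technical obstacle is that, in sharp contrast with the Dirac-point case (cf.\ Lemma~\ref{lemma: construction of curve gamma}), the stationary set $\{\partial_{k_2}\Phi = 0\}$, described to leading order by $k_2^2 = 8\tilde v_2 - 3k_1^2$, is \emph{empty} for $\tilde v_2 < 0$, reduces to the single point $\mathbf{k} = \mathbf{0}$ for $\tilde v_2 = 0$, and splits into a pair of short arcs for $\tilde v_2 > 0$ (defined only where $3k_1^2\le 8\tilde v_2$). The proof must therefore branch on the sign and magnitude of $\tilde{\mathbf{v}}$: for $\tilde v_2$ bounded below a negative constant, repeated integration by parts in $k_2$ yields decay of any order; for $\tilde v_2\ge 0$ of moderate size the band decomposition applies to each arc separately with additional care near the endpoints; and for $\tilde v_2$ in the intermediate range $0\le \tilde v_2\lesssim |t|^{-2c}$ the arcs lie inside the $|t|^{-c}$-band, so the analysis must proceed directly via the homogeneous cubic structure of $P_3$ combined with trivial bounds. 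A parallel case distinction on $|\tilde v_1|$ may be needed to localize the stationary point in $k_1$. Executing these branches cleanly while keeping the estimates uniform in $\tilde{\mathbf{v}}$ is the central technical challenge.
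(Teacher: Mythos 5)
Your Taylor expansion of $\varphi$ near $\mathbf{K}_2$ is correct, and your cubic $P_3(\mathbf{k})=\tfrac{1}{24}k_2(9k_1^2+k_2^2)$ is in fact the same polynomial that the paper obtains, just expressed in Cartesian rather than the oblique coordinates $\tilde{\mathbf{k}}=(\mathbf{k}\cdot\mathbf{v}_1,\mathbf{k}\cdot\mathbf{v}_2)$, in which it diagonalizes to $\tfrac16(\tilde k_1^3-\tilde k_2^3)$. That part checks out, and your observation that the $|t|^{-2/3}$ rate should arise from the cubic homogeneity of $P_3$ is also the right heuristic.

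However, the execution you propose diverges from the paper in a way that leaves a genuine gap. The paper's central device for Theorem~\ref{thm: near an intersection of two curves} is not a band decomposition around a single stationary curve (as in Section~\ref{sec: refined degenerate oscillatory integral estimate}): it is a \emph{dyadic decomposition in $|\mathbf{k}|$}. The paper writes $\chi_0(|\cdot|/\delta_0\delta)=\sum_{N\leq\delta}\eta(|\cdot|/\delta_0 N)$, rescales each piece by $\mathbf{k}\mapsto N\mathbf{k}$ so that the phase becomes $\pm k_1^3+k_2^3+\tfrac{1}{N^3}\mathcal{O}_4(N\mathbf{k})$ on a fixed annulus $|\mathbf{k}|\sim\delta_0$, proves a per-piece estimate $|\mathbb{I}_{2;N}(t;\mathbf{v})|\lesssim|t|^{-5/6}$ (by splitting near/away from the axes and only then constructing stationary curves $\gamma_N$, with $k_1$ bounded away from $0$ throughout), and finally obtains the stated $2/3$ from the dyadic sum $\sum_N N^2\min\{1,(N^3|t|)^{-5/6}\}\sim|t|^{-2/3}$. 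The dyadic normalization is what makes the stationary-curve geometry tractable: after rescaling, $k_1\sim\delta_0$ is of unit size, so $|\partial_{k_2}^2\tilde\phi_{2;N}|\sim\delta_0$ is bounded below, and the implicit-function-theorem construction of the curve never degenerates.

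Your proposal skips the dyadic step and tries to run a single global band decomposition directly on $P_3$. This encounters precisely the obstacles you flag but do not resolve: the stationary set $\partial_{k_2}\Phi=0$ is, to leading order, the ellipse $3k_1^2+k_2^2=8\tilde v_2$, which (i) shrinks to the origin as $\tilde v_2\to0^+$, (ii) is empty for $\tilde v_2<0$, and (iii) has turning points at $k_1=\pm\sqrt{8\tilde v_2/3}$ where the graph parameterization $k_2=\gamma(k_1)$ fails. Moreover, inside your band, your plan to use van der Corput in $k_1$ with $|\partial_{k_1}^2\Phi|\sim|k_2|$ degenerates near $k_2=0$: the resulting bound $(|t|\,|k_2|)^{-1/2}$ is not integrable over the band near the bottom of the ellipse without further decomposition, and the ``trivial bounds'' you invoke for the intermediate range $0\leq\tilde v_2\lesssim|t|^{-2c}$ are exactly the part that needs to produce the $|t|^{-2/3}$ and is left unproved. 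Your attribution of the slower rate to $\partial_{k_1}^3 P_3=0$ is also off: the paper obtains $5/6$ per dyadic block (same as at the Dirac point), and the drop to $2/3$ comes from the dyadic summation, not from a missing third derivative. In short, the conceptual reduction is right but the missing dyadic-scaling step is essential; without it the band-decomposition argument you sketch does not close.
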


\begin{remark}
By the reduction in Section \ref{sec: reduction to the oscillatory integral estimates}, Theorem \ref{thm: near an intersection of two curves} implies Theorem \ref{thm: oscillatory integral estimates} for $\mathbf{K}\in\mathcal{K}_2$.
\end{remark}

\subsection{Reduction to the degenerate oscillatory integral}\label{sec: reduction, near an intersection of two curves}

For the oscillatory integral 
$$\mathbb{I}_{\mathbf{k}\approx\mathbf{K}_2}(t; \mathbf{v}):=e^{it\mathbf{K}_2\cdot\mathbf{v}}\int_{-\infty}^\infty\int_{-\infty}^\infty e^{-it(\varphi(\mathbf{K}_2+\mathbf{k})-\mathbf{k}\cdot \mathbf{v})}\chi_{\leq\delta}(\mathbf{k})d\mathbf{k},$$
we change the variables by $\tilde{\mathbf{k}}=(\mathbf{k}\cdot\mathbf{v}_1, \mathbf{k}\cdot\mathbf{v}_2)$ with $\mathbf{v}_1=(\frac{\sqrt{3}}{2},\frac{1}{2})$ and $\mathbf{v}_2=(\frac{\sqrt{3}}{2},-\frac{1}{2})$. Then, it is important to observe from direct calculations for the phase formula \eqref{eq: phase function} with the expansion $\sqrt{1+x}=1+\frac{1}{2}x-\frac{1}{8}x^2+\frac{1}{16}x^3+\cdots$ that in the new coordinates, the phase function is expanded as 
$$\begin{aligned}
\varphi(\mathbf{K}_2+\mathbf{k})&=\sqrt{3-2\sin(\mathbf{k}\cdot\mathbf{v}_1) +2\sin(\mathbf{k}\cdot\mathbf{v}_2)-2\cos(\mathbf{k}\cdot(\mathbf{v}_1-\mathbf{v}_2))}\\
&=\sqrt{3-2\sin \tilde{k}_1 +2\sin \tilde{k}_2-2\cos(\tilde{k}_1-\tilde{k}_2)}\\
&=\bigg\{1-2(\tilde{k}_1-\tilde{k}_2)+(\tilde{k}_1-\tilde{k}_2)^2+\frac{(\tilde{k}_1)^3-(\tilde{k}_2)^3}{3}+\mathcal{O}_4(\tilde{\mathbf{k}})\bigg\}^{\frac{1}{2}}\\
&=1-\tilde{k}_1+\tilde{k}_2+\frac{1}{6}(\tilde{k}_1)^3-\frac{1}{6}(\tilde{k}_2)^3+\mathcal{O}_4(\tilde{\mathbf{k}}),
\end{aligned}$$
where $\mathcal{O}_4(\tilde{\mathbf{k}})$ is an analytic function such that $|\mathcal{O}_4(\tilde{\mathbf{k}})|\lesssim |\mathbf{k}|^4$ (see \eqref{eq: rm}). Subsequently, the integral becomes 
\begin{equation}\label{eq: integral for K2}
\mathbb{I}_{\mathbf{k}\approx\mathbf{K}_2}(t; \mathbf{v})=\frac{2}{\sqrt{3}}e^{it(\mathbf{K}_2\cdot\mathbf{v}-1)}\int_{-\infty}^\infty\int_{-\infty}^\infty e^{-it(\frac{1}{6}(\tilde{k}_1)^3-\frac{1}{6}(\tilde{k}_2)^3+\mathcal{O}_4(\tilde{\mathbf{k}})-\tilde{v}_1\tilde{k}_1-\tilde{v}_2\tilde{k}_2)}\chi_0\bigg(\frac{|\tilde{\mathbf{k}}|}{\delta}\bigg)d\tilde{\mathbf{k}},
\end{equation}
where $(\tilde{v}_1, \tilde{v}_2)=(1+\frac{1}{\sqrt{3}}v_1+v_2, -1+\frac{1}{\sqrt{3}}v_1-v_2)$, and the cut-off $\chi_{\leq\delta}$ is chosen so that $\chi_{\leq\delta}(\frac{\tilde{k}_1+\tilde{k}_2}{\sqrt{3}}, \tilde{k}_1-\tilde{k}_2)=\chi_0(\frac{|\tilde{\mathbf{k}}|}{\delta})$.

Now, generalizing the right hand side of \eqref{eq: integral for K2} up to trivial changes of variables\footnote{For a direct proof, it is convenient for symmetric reductions to allow the coefficient for $k_1^3$  to be either 1 or $-1$, and not to specify the coefficients in $\mathcal{O}_4(\mathbf{k})=\sum_{m_1+m_2\geq 4}c_{m_1, m_2} k_1^{m_1} k_2^{m_2}$ (see \eqref{eq: rm}).}, we define the oscillatory integral
$$\mathbb{I}_2(t;\mathbf{v}):=\int_{-\infty}^\infty\int_{-\infty}^\infty e^{-it(\phi_2(\mathbf{k})-\mathbf{k}\cdot \mathbf{v})}\chi_0\bigg(\frac{|\mathbf{k}|}{\delta_0\delta}\bigg)d\mathbf{k},$$
where $0<\delta\ll\delta_0\ll1$ and $\chi_0(\frac{\cdot}{\delta_0\delta})=\sum_{N\leq \delta}\eta(\frac{\cdot}{\delta_0N})$ and  
$$\phi_2(\mathbf{k}):=\pm k_1^3+k_2^3+\mathcal{O}_4(\mathbf{k}).$$

\begin{remark}
For numerical simplicity in the proof below, the smooth cut-off $\chi_0(\frac{|\cdot|}{\delta})$ in $\mathbb{I}_{\mathbf{k}\approx\mathbf{K}_2}(t; \mathbf{v})$ replaced by $\chi_0(\frac{|\cdot|}{\delta_0\delta})$ in $\mathbb{I}_2(t;\mathbf{v})$. Indeed, this change does not affect the result, because small $\delta>0$ is not specified in Theorem \ref{thm: near an intersection of two curves}. 
\end{remark}

Subsequently, the proof of Theorem \ref{thm: near an intersection of two curves} is reduced to the following proposition.

\begin{proposition}\label{prop: two curve intersection}
$$\sup_{\mathbf{v}\in\R^2}|\mathbb{I}_2(t;\mathbf{v})| \lesssim(1+|t|)^{-\frac23}.$$
\end{proposition}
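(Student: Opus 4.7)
The plan is to exploit the ``tensor cubic'' structure of the phase $\Phi(\mathbf{k}) := \phi_2(\mathbf{k}) - \mathbf{k}\cdot\mathbf{v} = \pm k_1^3 + k_2^3 + \mathcal{O}_4(\mathbf{k}) - v_1 k_1 - v_2 k_2$. Direct differentiation of the expansion of $\phi_2$ shows that, on the support of $\chi_0$ (after shrinking $\delta_0\delta$ if necessary), $|\partial_{k_j}^3\Phi| = |\pm 6 + O(|\mathbf{k}|)| \geq 3$ uniformly for both $j\in\{1,2\}$ and for all $\mathbf{v}\in\mathbb{R}^2$, since $\mathbf{v}$ enters $\Phi$ only linearly. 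By the one-dimensional van der Corput lemma of order $3$ applied in either variable, one obtains the Airy-type bound $\bigl|\int e^{-it\Phi}\chi_0\,dk_j\bigr| \lesssim |t|^{-1/3}$ uniformly in the transverse variable and in $\mathbf{v}$. The target exponent $|t|^{-2/3}=|t|^{-1/3}\cdot|t|^{-1/3}$ is precisely the product of two such Airy bounds, motivating a two-directional van der Corput argument.

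I would introduce the Airy scale $\mu := |t|^{-1/3}$ (assuming $|t|\geq 1$) and dyadically partition the integration region. On the central piece $\{|k_1|,|k_2|\leq\mu\}$ the trivial area bound gives $O(\mu^2)=O(|t|^{-2/3})$ immediately. On an ``edge'' piece where exactly one of $|k_1|,|k_2|$ exceeds $\mu$, applying the 1D van der Corput of order $3$ in the long variable yields the uniform bound $O(|t|^{-1/3})$; after integration over the short variable of length $O(\mu)$ this again produces $O(|t|^{-2/3})$. On the bulk pieces $D_{j_1,j_2}:=\{|k_1|\sim 2^{j_1}\mu,\,|k_2|\sim 2^{j_2}\mu\}$ with $j_1, j_2\geq 0$, one has $|\partial_{k_l}^2\Phi| = |\pm 6k_l + O(|\mathbf{k}|^2)| \sim 2^{j_l}\mu$ for $l=1,2$, so that the heuristic two-dimensional stationary-phase bound $|t|^{-1}/\sqrt{(2^{j_1}\mu)(2^{j_2}\mu)} = 2^{-(j_1+j_2)/2}|t|^{-2/3}$ is summable over $j_1, j_2\geq 0$.

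The principal obstacle is making the bulk estimate rigorous, because two 1D van der Corput estimates cannot simply be iterated: after integrating in $k_1$ the resulting function of $k_2$ is no longer of pure oscillatory form. Following the Oh--Stein template employed in Section~\ref{sec: dispersion near a Dirac point}, I would carve out, inside each $D_{j_1,j_2}$, a smooth neighborhood of width $|t|^{-c}$ around the curve $\{\partial_{k_2}\Phi=0\}$ (which exists, approximately at $k_2=\pm\sqrt{v_2/3}$, only when $v_2$ has the sign and magnitude that make it intersect the band), apply van der Corput of order $3$ in $k_1$ on the inside using $|\partial_{k_1}^3\Phi|\gtrsim 1$, and integrate by parts in $k_2$ on the outside using the resulting lower bound on $|\partial_{k_2}\Phi|$; the cutoff width $c$ is then chosen to balance the two contributions and produce the desired $O(|t|^{-2/3})$ per piece. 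Enumerating the cases on the signs and magnitudes of $(v_1,v_2)$ relative to the dyadic band is tedious but proceeds via the same elementary integration-by-parts and change-of-variable techniques already used in Section~\ref{sec: dispersion near a Dirac point}.
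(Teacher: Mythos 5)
Your outer strategy (Airy scale $\mu=|t|^{-1/3}$, trivial bound on the central square, $1$D van der Corput in the long variable on the edge strips, and a Stein--Oh splitting on the remaining bulk) is reasonable, but the bulk estimate has a genuine gap that your proposal does not address, and it is precisely the gap that the paper's extra structural step was designed to remove.

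The problem is the claimed lower bound $|\partial_{k_2}^2\Phi|\sim 2^{j_2}\mu$ on $D_{j_1,j_2}$, and with it the very existence of a smooth graph $\{\partial_{k_2}\Phi=0\}$. Writing $\phi_2(\mathbf{k})=\pm k_1^3+k_2^3+\mathcal{O}_4(\mathbf{k})$ with $\mathcal{O}_4(\mathbf{k})=\sum_{m_1+m_2\ge 4}c_{m_1,m_2}k_1^{m_1}k_2^{m_2}$, one has $\partial_{k_2}^2\Phi=6k_2+2c_{2,2}k_1^2+\mathcal{O}_3(\mathbf{k})$. This vanishes along the curve $k_2\approx -\tfrac{1}{3}c_{2,2}k_1^2$, which cuts through every bulk annulus $D_{j_1,j_2}$ with $2^{j_2}\mu\lesssim (2^{j_1}\mu)^2$. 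On that set the implicit function theorem that would produce your curve $k_2=\gamma(k_1)$ with $\partial_{k_2}\Phi(k_1,\gamma(k_1))=0$ is not applicable, and the transversal lower bound you need for the outside integration-by-parts fails. (For the same reason, even away from that curve, after integrating by parts in $k_2$ you must still estimate the $k_1$-derivative of $1/\partial_{k_2}\Phi$, and $\partial_{k_1}\partial_{k_2}\Phi=\mathcal{O}_2(\mathbf{k})$ is not small relative to $\partial_{k_2}\Phi$ on these pieces, so the $\mathcal{R}(t)\lesssim|t|^c$ step from Lemma~\ref{lemma: bound for C_1 Dirac} does not carry over for free.) In particular, $\partial_{k_2}\Phi$ has a one-dimensional Airy-type degeneracy along this curve, not the quadratic-in-$k_2$ behaviour your splitting assumes.

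The paper's proof avoids this by two structural moves that your proposal does not make. First, it decomposes dyadically around the origin and rescales each shell $\{|\mathbf{k}|\sim N\}$ to a unit annulus, on which the correction $\frac{1}{N^3}\mathcal{O}_4(N\mathbf{k})$ is $O(N)$, i.e.\ a genuinely small perturbation of $\pm k_1^3+k_2^3$; the final $|t|^{-2/3}$ arises from the $N^2$ Jacobian in the sum $\sum_N N^2\min\{1,(N^3|t|)^{-5/6}\}$, with $|t|^{-5/6}$ proved on each shell, rather than from a per-piece $|t|^{-2/3}$. Second, and crucially, near each coordinate axis the paper substitutes $k_1\mapsto k_1\mp k_2$ to eliminate $k_2^3$, so that after passing to the stationary curve the modified phase becomes the \emph{separable} expression $\tilde{\tilde\phi}_{2;N}=f(k_1)\pm\delta_0 k_2^2$ of \eqref{eq: I2N0 near transformed integral}. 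This quadratizes the $k_2$-dependence (with $\partial_{k_2}^2\approx\pm 2\delta_0$, uniformly nondegenerate), so the Stein--Oh splitting at scale $|t|^{-1/2}$ in $k_2$ plus van der Corput of order three in $k_1$ applies cleanly; there is no degenerate curve to worry about, and the $k_1$-dependence of the IBP amplitude drops out. Without an analogue of this quadratization, your bulk case analysis near the axes does not close.
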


\begin{remark}
By reduction, one can associate the phase $\phi_2(\mathbf{k}):=\pm k_1^3+k_2^3+\mathcal{O}_4(\mathbf{k})$ with a Newton polygon in Varchenko's theorem \cite{Varcenko}. Then, Proposition \ref{prop: two curve intersection} follows together with Karpushkin's stability theorem \cite{Karpushkin 1, Karpushkin 2}.
\end{remark}

\subsection{Direct proof of Proposition \ref{prop: two curve intersection}}
By the dyadic decomposition and rescaling, we decompose 
$$\mathbb{I}_2(t; \mathbf{v})=\sum_{N\leq\delta}\int_{-\infty}^\infty\int_{-\infty}^\infty e^{-it(\phi_2(\mathbf{k})-\mathbf{k}\cdot \mathbf{v})}\eta\bigg(\frac{|\mathbf{k}|}{\delta_0N}\bigg)d\mathbf{k}=\sum_{N\leq\delta} N^2\mathbb{I}_{2;N}\bigg(N^3t;\frac{\mathbf{v}}{N^2}\bigg),$$
where $N=2^m\in 2^{\mathbb{Z}}$ denotes a dyadic number, $\eta$ is chosen in \eqref{eq: eta} and
$$\mathbb{I}_{2;N}(t;\mathbf{v}):=\int_{-\infty}^\infty\int_{-\infty}^\infty e^{-it(\phi_{2;N}(\mathbf{k})-\mathbf{k}\cdot \mathbf{v})}\eta\bigg(\frac{|\mathbf{k}|}{\delta_0}\bigg)d\mathbf{k}$$
with the phase
\begin{equation}\label{eq: phase for two intersection}
\phi_{2;N}(\mathbf{k}):=\frac{1}{N^3}\phi_2(N\mathbf{k})=\pm k_1^3+ k_2^3+\frac{1}{N^3}\mathcal{O}_4(N\mathbf{k}).
\end{equation}
In a sequel, we assume that $N\leq\delta$. Then, it is enough to show that
\begin{equation}\label{eq: prop two curve intersection dyadic piece reduction}
\sup_{\mathbf{v}\in\mathbb{R}^2}|\mathbb{I}_{2;N}(t;\mathbf{v})|\lesssim|t|^{-a}
\end{equation}
for some $a>\frac{2}{3}$, because together with the trivial bound $|\mathbb{I}_{2;N}(t;\mathbf{v})|\lesssim1$, \eqref{eq: prop two curve intersection dyadic piece reduction} implies that $|\mathbb{I}_2(t; \mathbf{v})|\lesssim \sum_{N\leq\delta} N^2\min\{1,(N^3|t|)^{-a}\}\lesssim |t|^{-\frac{2}{3}}$. Indeed, for each $\mathbb{I}_{2;N}(t;\mathbf{v})$, we observe that for the contribution of the integral away from the $k_1$- and the $k_2$-axes, given by  
$$\mathbb{I}_{2;N}^{\textup{away}}(t;\mathbf{v}):=\int_{-\infty}^\infty\int_{-\infty}^\infty e^{-it(\phi_{2;N}(\mathbf{k})-\mathbf{k}\cdot \mathbf{v})}\eta\bigg(\frac{|\mathbf{k}|}{\delta_0}\bigg)\bigg\{1-\chi_0\bigg(\frac{k_2}{\delta |k_1|}\bigg)-\chi_0\bigg(\frac{k_1}{\delta |k_2|}\bigg)\bigg\}d\mathbf{k},$$
the phase function satisfies $|\textup{det}(\nabla^2\phi_{2;N}(\mathbf{k}))|\sim |k_1||k_2|\sim_{\delta,\delta_0} 1$ (see \eqref{eq: phase for two intersection}). Thus, the standard non-degenerate phase oscillatory integral estimate immediately yields $|\mathbb{I}_{2;N}^{\textup{away}}(t;\mathbf{v})|\lesssim_{\delta, \delta_0} |t|^{-1}$. For \eqref{eq: prop two curve intersection dyadic piece reduction}, by symmetry\footnote{since the higher-order terms in $\mathcal{O}_4(\mathbf{k})$ in $\phi_2(\mathbf{k})=\pm k_1^3+k_2^3+\mathcal{O}_4(\mathbf{k})$ are not specified.}, it remains to consider the contribution near the $k_1$-axis. Moreover, changing the variable $k_1$ by $-k_1$ for $k_1<0$ and by symmetry, it is enough to consider the integral near the positive part of the $k_1$-axis, that is, 
$$\mathbb{I}_{2;N}^{\textup{near}}(t;\mathbf{v}):=\int_{-\infty}^\infty\int_0^\infty e^{-it(\phi_{2;N}(\mathbf{k})-\mathbf{k}\cdot \mathbf{v})}\eta\bigg(\frac{|\mathbf{k}|}{\delta_0}\bigg)\chi_0\bigg(\frac{k_2}{\delta k_1}\bigg)dk_1dk_2.$$
Therefore, the proof of \eqref{eq: prop two curve intersection dyadic piece reduction} can be reduced to show that there exists $a>\frac{2}{3}$ such that 
\begin{equation}\label{eq: prop two curve intersection dyadic piece reduction'}
\sup_{\mathbf{v}\in\mathbb{R}^2}|\mathbb{I}_{2;N}^{\textup{near}}(t;\mathbf{v})|\lesssim|t|^{-a}.
\end{equation}

To analyze the integral $\mathbb{I}_{2;N}^{\textup{near}}(t;\mathbf{v})$, we note that $|k_1|\sim |\mathbf{k}|\sim\delta_0$ and $|k_2|\lesssim \delta|k_1|\sim\delta_0\delta $ in the support of $\eta(\frac{|\mathbf{k}|}{\delta_0})\chi_0(\frac{k_2}{\delta k_1})$. Then, replacing $k_1$ by $k_1\mp k_2$ to cancel the cubic term $k_2^3$ in \eqref{eq: phase for two intersection}, we write 
$$\mathbb{I}_{2;N}^{\textup{near}}(t;\mathbf{v}):=\int_{-\infty}^\infty\int_0^\infty e^{-it\tilde{\phi}_{2;N}(\mathbf{k};\mathbf{v})}\tilde{\chi}(\mathbf{k})dk_1dk_2,$$
where 
\begin{equation}\label{eq: phase for two intersection tilde}
\begin{aligned}
\tilde{\phi}_{2;N}(\mathbf{k};\mathbf{v}):&=\pm k_1^3- 3k_1^2k_2\pm 3k_1k_2^2+\frac{1}{N^3}\mathcal{O}_4(N\mathbf{k})-v_1k_1-(v_2\mp v_1)k_2
\end{aligned}
\end{equation}
and $\tilde{\chi}(\mathbf{k}):=\eta(\frac{|(k_1\mp k_2, k_2)|}{\delta_0})\chi_0(\frac{k_2}{\delta(k_1\mp k_2)})$ is a smooth function supported in 
\begin{equation}\label{eq: K2 chi tilde range}
k_1\sim \delta_0\quad\textup{and}
\quad |k_2|\lesssim\delta_0\delta.
\end{equation}
We observe that 
\begin{equation}\label{eq: k2 derivative of modified phase for two intersection}
\partial_{k_2}\tilde{\phi}_{2;N}(\mathbf{k};\mathbf{v})=\delta_0G_N(\mathbf{k})-3k_1^2-(v_2\mp v_1),
\end{equation}
where
\begin{equation}\label{eq: G function for two intersection}
G_N(\mathbf{k}):=\frac{1}{\delta_0}\bigg(\pm 6k_1k_2+\frac{1}{N^2}\mathcal{O}_3(N\mathbf{k})\bigg).
\end{equation}
Since $|G_N(\mathbf{k})|\lesssim\delta_0\delta$ when $N\leq\delta$, it is natural to decompose further as
$$\mathbb{I}_{2;N}^{\textup{near}}(t;\mathbf{v})=\mathbb{I}_{2;N; (0)}^{\textup{near}}(t;\mathbf{v})+\mathbb{I}_{2;N;(1)}^{\textup{near}}(t;\mathbf{v}),$$
where 
$$\mathbb{I}_{2;N; (j)}^{\textup{near}}(t;\mathbf{v}):=\int_{-\infty}^\infty\int_0^\infty e^{-it\tilde{\phi}_{2;N}(\mathbf{k};\mathbf{v})}\chi_j\bigg(\frac{3k_1^2+(v_2\mp v_1)}{\delta_0\delta}\bigg)\tilde{\chi}(\mathbf{k})dk_1dk_2$$
(see \eqref{eq: chi0} and \eqref{eq: chi1} for the definition of $\chi_0$ and $\chi_1$). In the integral $\mathbb{I}_{2;N; (1)}^{\textup{near}}(t;\mathbf{v})$, we have $|\partial_{k_2}\tilde{\phi}_{2;N}(\cdot;\mathbf{v})|\geq |3k_1^2+(v_2\mp v_1)|-\delta_0|G_N(\mathbf{k})|\gtrsim\delta_0\delta$. Hence, one can show that $|\mathbb{I}_{2;N; (1)}^{\textup{near}}(t;\mathbf{v})|\lesssim_{\delta_0,\delta}|t|^{-1}$ by integration by parts for the $k_2$-variable.

It remains to consider $\mathbb{I}_{2;N; (0)}^{\textup{near}}(t;\mathbf{v})$. We will show that 
\begin{equation}\label{eq: I2N0 claim}
|\mathbb{I}_{2;N; (0)}^{\textup{near}}(t;\mathbf{v})|\lesssim|t|^{-\frac{5}{6}},
\end{equation}
which completes the proof of Proposition~\ref{prop: two curve intersection} (see \eqref{eq: prop two curve intersection dyadic piece reduction'}). Indeed, as we did in the proof of Lemma \ref{lemma: construction of curve gamma}, we will prove \eqref{eq: I2N0 claim} constructing the curve $k_2=\tilde{\gamma}_N(k_1)$ such that $\tilde{\phi}_{2;N}(k_1,\cdot;\mathbf{v})$ is stationary in the $k_2$-direction. Suppose that $\mathbf{k}$ is contained in the support of $\tilde{\chi}$ so that \eqref{eq: K2 chi tilde range} holds. Then, by the definition \eqref{eq: G function for two intersection}, we have
\begin{equation}\label{eq: G_N properties}
\left\{\begin{aligned}
G_N(\mathbf{0})&=0,\\
\partial_{k_1}G_N(\mathbf{k})&=\frac{1}{\delta_0}\bigg(\pm 6k_2+\frac{1}{N}\mathcal{O}_2(N\mathbf{k})\bigg),\\
\partial_{k_2}G_N(\mathbf{k})&=\frac{1}{\delta_0}\bigg(\pm 6k_1+\frac{1}{N}\mathcal{O}_2(N\mathbf{k})\bigg)\approx \pm \frac{6k_1}{\delta_0},\\
\partial_{k_1}^{j_1}\partial_{k_2}^{j_2}G_N(\mathbf{k})&=\frac{1}{\delta_0}N^{j_1+j_2-2}\mathcal{O}_0(N\mathbf{k})\quad\textup{for }j_1+j_2\geq3,
\end{aligned}\right.
\end{equation}
where $N\leq\delta$. Hence, by the implicit function theorem together with $|\partial_{k_2}G_N(\mathbf{k})|\sim1$ and \eqref{eq: K2 chi tilde range}, there exists a differentiable function $k_2=\gamma_N(k_1)$ near the origin such that $G_N(k_1, \gamma_N(k_1))=0$, $\gamma_N'(k_1)=-\frac{(\partial_{k_1}G_N)(k_1, \gamma_N(k_1))}{(\partial_{k_2}G_N)(k_1, \gamma_N(k_1))}$, $|\gamma_N(k_1)|\lesssim\delta_0\delta$ and $|\gamma_N'(k_1)|\lesssim\delta$. Moreover, since differentiation of $G_N(\mathbf{k})$ generates only polynomially many terms with geometrically decreasing higher-order derivatives (see the last line of \eqref{eq: G_N properties}), it follows that $\gamma_N(k_1)$ is analytic near the origin. 

Next, we claim that more than analyticity, $\gamma_N(k_1)$ is of the form  
\begin{equation}\label{eq: gamma N expansion}
\gamma_N(k_1)=\frac{(Nk_1)^2}{N}\mathcal{O}_0(Nk_1).
\end{equation}
For the proof, we insert $k_2=\gamma_N(k_1)=\frac{(Nk_1)^2}{N}g(Nk_1)$ for some unknown function $g$ in \eqref{eq: G function for two intersection} with $\mathcal{O}_3(\mathbf{k})=\sum_{m_1+m_2\geq 3} c_{m_1, m_2}k_1^{m_1}k_2^{m_2}$. Then, we obtain 
$$G_N(k_1, \gamma_N(k_1))=\frac{Nk_1^3}{\delta_0}H_N(Nk_1, g(Nk_1))=0,$$
where
$$H_N(\mathbf{k})=\pm 6k_2+\sum_{m_1+m_2\geq 3}c_{m_1, m_2}k_1^{m_1+2m_2-3}k_2^{m_2}=\pm 6k_2+\mathcal{O}_0(\mathbf{k}).$$
Hence, applying the implicit function theorem to $H_N(\mathbf{k})=0$ with $\partial_{k_2}H_N(\mathbf{k})\approx \pm 6$ as we did to construct $\gamma_N$, we can construct an analytic function $g(k_1)$ near the origin. This proves the claim \eqref{eq: gamma N expansion}.

Subsequently, for each $k_1\sim\delta_0$, expanding the Taylor series around $k_2=\gamma_N(k_1)$ with $G_N(k_1, \gamma_N(k_1))=0$, the equation $\partial_{k_2}\tilde{\phi}_{2;N}(k_1, \cdot;\mathbf{v})=0$ (see \eqref{eq: k2 derivative of modified phase for two intersection}) can be written as 
$$0=-\frac{3k_1^2+(v_2\mp v_1)}{\delta_0}+\sum_{m=1}^\infty \frac{\partial_{k_2}^mG_N(k_1, \gamma_N(k_1))}{m!}(\cdot-\gamma_N(k_1))^m.$$
For this, we insert an ansatz $k_2=\tilde{\gamma}_N(k_1)=\gamma_N(k_1)+\sum_{m=1}^\infty d_{N,m}(k_1)\big(\frac{3k_1^2+(v_2\mp v_1)}{\delta_0}\big)^m$ and determine $d_{N,m}(k_1)$. In fact, this is possible because $|\frac{3k_1^2+(v_2\mp v_1)}{\delta_0}|\lesssim\delta\ll1$, $|\partial_{k_2}G_N(\mathbf{k})|\sim 1$,  $\partial_{k_2}^2G_N(\mathbf{k})= \frac{1}{\delta_0}\mathcal{O}_1(N\mathbf{k})$ and $\partial_{k_2}^mG_N(\mathbf{k})= \frac{N^{m-2}}{\delta_0}\mathcal{O}_0(N\mathbf{k})$ for $m\geq 3$ (see \eqref{eq: G_N properties}). Then, determining coefficients $d_{N,m}(k_1)$ together with \eqref{eq: gamma N expansion}, we obtain that 
\begin{equation}\label{eq: gamma tilde N expansion}
\tilde{\gamma}_N(k_1)=\frac{(Nk_1)^2}{N}\mathcal{O}_0(Nk_1)+\frac{\frac{3k_1^2+v_2\mp v_1}{\delta_0}}{\partial_{k_2}G_N(k_1, \gamma_N(k_1))}+N \mathcal{O}_2\bigg(\frac{3k_1^2+v_2\mp v_1}{\delta_0}\bigg).
\end{equation}

Now, for each $k_1$, expanding the series expansion of $\tilde{\phi}_{2;N}(k_1, \cdot;\mathbf{v})$ around $k_2=\tilde{\gamma}_N(k_1)$ with $\partial_{k_2}\tilde{\phi}_{2;N}(k_1, \tilde{\gamma}_N(k_1);\mathbf{v})=0$, we write 
$$\tilde{\phi}_{2;N}(\mathbf{k};\mathbf{v})=\tilde{\phi}_{2;N}(k_1, \tilde{\gamma}_N(k_1);\mathbf{v})+\sum_{m=2}^\infty\frac{\partial_{k_2}^m\tilde{\phi}_{2;N}(k_1, \tilde{\gamma}_N(k_1);\mathbf{v})}{m!}(k_2-\tilde{\gamma}_N(k_1))^m.$$
Note that $\partial_{k_2}^2\tilde{\phi}_{2;N}(\mathbf{k};\mathbf{v})=\delta_0\partial_{k_2}G_N(\mathbf{k})\approx \pm \delta_0$  and $|\partial_{k_2}^j\tilde{\phi}_{2;N}(\mathbf{k};\mathbf{v})|=\delta_0|\partial_{k_2}^{j-1}G_N(\mathbf{k})|\lesssim N$ for $j\geq 3$ (see \eqref{eq: k2 derivative of modified phase for two intersection} and \eqref{eq: G_N properties}). Thus, we can substitute the right hand sum by 
$$\sum_{m=2}^\infty\frac{\partial_{k_2}^m\tilde{\phi}_{2;N}(k_1, \tilde{\gamma}_N(k_1);\mathbf{v})}{m!}(k_2-\tilde{\gamma}_N(k_1))^m=\pm \delta_0\tilde{k}_2^2.$$
For convenience, we denote $\tilde{k}_2$ by $k_2$. Then, the integral becomes 
\begin{equation}\label{eq: I2N0 near transformed integral}
\mathbb{I}_{2;N; (0)}^{\textup{near}}(t;\mathbf{v}):=\int_{-\infty}^\infty\int_0^\infty e^{-it\tilde{\tilde{\phi}}_{2;N}(\mathbf{k};\mathbf{v})}\chi_0\bigg(\frac{3k_1^2+(v_2\mp v_1)}{\delta_0\delta}\bigg)\tilde{\tilde{\chi}}(\mathbf{k})dk_1dk_2,
\end{equation}
where 
$$\tilde{\tilde{\phi}}_{2;N}(\mathbf{k};\mathbf{v})=\tilde{\phi}_{2;N}\big(k_1, \tilde{\gamma}_N(k_1);\mathbf{v}\big)\pm\delta_0k_2^2.$$
We observe from \eqref{eq: phase for two intersection tilde}, \eqref{eq: G_N properties} (in particular, $\partial_{k_2}G_N(\mathbf{k})\approx \pm \frac{6k_1}{\delta_0}$)and \eqref{eq: gamma tilde N expansion} that 
$$\begin{aligned}
&\tilde{\phi}_{2;N}(k_1, \tilde{\gamma}_N(k_1);\mathbf{v})\\
&=\pm k_1^3- 3k_1^2\tilde{\gamma}_N(k_1)\pm 3k_1\tilde{\gamma}_N(k_1)^2+\frac{1}{N^3}\mathcal{O}_4(Nk_1, N\tilde{\gamma}_N(k_1))-v_1k_1-(v_2\mp v_1)\tilde{\gamma}_N(k_1)\\
&=\pm k_1^3- 3k_1^2\bigg(\frac{3k_1^2+(v_2\mp v_1)}{\pm 6k_1}\bigg)\pm 3k_1\bigg(\frac{3k_1^2+(v_2\mp v_1)}{\pm 6k_1}\bigg)^2\\
&\quad-v_1k_1-(v_2\mp v_1)\bigg(\frac{3k_1^2+(v_2\mp v_1)}{\pm 6k_1}\bigg)+N\times (\textup{smooth function of }k_1)\\
&=\pm\frac{1}{4}k_1^3-\frac{v_1\pm v_2}{2}k_1\mp\frac{(v_2\mp v_1)^2}{12k_1}+N\times (\textup{smooth function of }k_1).
\end{aligned}$$

Now, we further decompose the integral $\mathbb{I}_{2;N; (0)}^{\textup{near}}(t;\mathbf{v})$ in \eqref{eq: I2N0 near transformed integral} using $\chi_0(\frac{k_2}{|t|^{-1/2}})+\chi_1(\frac{k_2}{|t|^{-1/2}})\equiv1$ (see \eqref{eq: chi0} and \eqref{eq: chi1}). For the integral with the cut-off $\chi_0(\frac{k_2}{|t|^{-1/2}})$ near the $k_1$-axis, we apply the van der Corput lemma \cite{Stein} for the inner $k_1$--integral with $$\partial_{k_1}^3\tilde{\tilde{\phi}}_{2;N}(\mathbf{k};\mathbf{v})\approx\pm\frac{3}{2}\pm \frac{(v_2\mp v_1)^2}{2k_1^4}\approx \pm 6$$
($|3k_1^2+(v_2\mp v_1)|\lesssim\delta_0\delta\ll 1$ is used) to obtain the $O(|t|^{-1/3})$ bound. Then, using $|k_2|\lesssim|t|^{-1/2}$ for the outer integral, we obtain $O(|t|^{-\frac{5}{6}})$-decay. For the other integral away from the $k_1$-axis, by integration by parts with $\partial_{k_2}(\tilde{\tilde{\phi}}_{2;N}(\mathbf{k};\mathbf{v}))=\pm 2\delta_0k_2$, we write 
$$\begin{aligned}
\frac{1}{it}\int_{-\infty}^\infty\int_0^\infty e^{-it\tilde{\tilde{\phi}}_{2;N}(\mathbf{k};\mathbf{v})}\partial_{k_2}\bigg\{\frac{1}{\pm 2\delta_0k_2}\chi_0\bigg(\frac{3k_1^2+(v_2\mp v_1)}{\delta_0\delta}\bigg)\chi_1\bigg(\frac{k_2}{|t|^{-1/2}}\bigg)\tilde{\tilde{\chi}}(\mathbf{k})\bigg\}dk_1dk_2.
\end{aligned}$$
Then, applying the van der Corput lemma \cite{Stein} with $\partial_{k_1}^3\tilde{\tilde{\phi}}_{2;N}(\mathbf{k};\mathbf{v})\approx\pm 6$ for the $k_1$-integral, we prove that it satisfies $O(|t|^{-5/6})$ bound. Therefore, collecting all, one can show \eqref{eq: prop two curve intersection dyadic piece reduction'} with $a=\frac{5}{6}$.

\section{Oscillatory integral localized at a non-intersection point}\label{sec: oscillatory integral localized at a non-intersection point}

For Theorem \ref{thm: oscillatory integral estimates} (as well as Theorem \ref{main thm: dispersion estimate}), it remains to show the following estimate.

\begin{theorem}[Oscillatory integral estimate; $\mathbf{K}\in\mathcal{K}_1'$]\label{thm: near a degenerate frequency, not an intersection}
Suppose that $\alpha_{1}(\mathbf{K})=0$ and $\mathbf{K}\in(\mathcal{B}_{\textup{rhom}}\cap\mathbb{R}_{\geq0}^2)\setminus (\mathcal{K}_2\cup\mathcal{K}_3)$. Then, for sufficiently small $\delta>0$, there exist $C_1>0$ and a smooth cut-off $\chi_{\leq\delta}$, whose support is contained in the disk of radius $\delta>0$, such that for any $\mathbf{v}\in\mathbb{R}^2$, 
$$|\mathbb{I}_{\mathbf{k}\approx\mathbf{K}}(t;\mathbf{v})|\leq\frac{C_1}{(1+|t|)^{5/6}}.
$$
\end{theorem}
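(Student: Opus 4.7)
The strategy follows the direct approach of Sections \ref{sec: dispersion near a Dirac point}--\ref{sec: oscillatory integral localized at a Dirac point at an intersection of two degenerate frequency curves}, exploiting the fact that at $\mathbf{K}\in\mathcal{K}_1'$ the Hessian $\nabla^2\varphi(\mathbf{K})$ has rank exactly one. By Theorem \ref{thm: gradient and hessian} and the hypotheses $\alpha_1(\mathbf{K})=0$ with $\mathbf{K}\notin\mathcal{K}_2\cup\mathcal{K}_3$, the factors $\varphi(\mathbf{K})$, $\alpha_2(\mathbf{K})$, $\alpha_{12}(\mathbf{K})$ are all nonzero. Passing to the coordinates $\tilde{\mathbf{k}}=\mathbf{V}^T\mathbf{k}$ of Section \ref{sec: Proof of phase formulae} and writing $\boldsymbol{\kappa}=\tilde{\mathbf{k}}-\tilde{\mathbf{K}}$, formula \eqref{eq: Hessian of phase in k tilde} shows that
\begin{equation*}
\nabla^2_{\boldsymbol{\kappa}}\tilde\varphi(\tilde{\mathbf{K}})=\frac{1}{\tilde\varphi(\tilde{\mathbf{K}})^3}\begin{bmatrix}-\tilde\alpha_2(\tilde{\mathbf{K}})\tilde\alpha_{12}(\tilde{\mathbf{K}}) & 0 \\ 0 & 0\end{bmatrix},
\end{equation*}
so $\kappa_1$ is a Morse (non-degenerate) direction while $\kappa_2$ is the null direction. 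The integral $\mathbb{I}_{\mathbf{k}\approx\mathbf{K}}(t;\mathbf{v})$ transforms (up to a constant Jacobian and a harmless phase factor) into $\int\!\!\int e^{-it\Psi(\boldsymbol{\kappa};\tilde{\mathbf{v}})}\tilde\chi_{\leq\delta}(\boldsymbol{\kappa})\,d\boldsymbol{\kappa}$ with phase $\Psi(\boldsymbol{\kappa};\tilde{\mathbf{v}}):=\tilde\varphi(\tilde{\mathbf{K}}+\boldsymbol{\kappa})-\tilde{\mathbf{v}}\cdot\boldsymbol{\kappa}$ and $\tilde{\mathbf{v}}=\mathbf{V}^{-1}\mathbf{v}$.

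Next I apply the implicit function theorem to $\partial_{\kappa_1}\Psi=0$, whose $\kappa_1$-Jacobian $\partial_{\kappa_1}^2\tilde\varphi(\tilde{\mathbf{K}})$ is nonzero, to produce an analytic curve $\kappa_1=h(\kappa_2;\tilde{\mathbf{v}})$ along which $\partial_{\kappa_1}\Psi$ vanishes. The complementary regime (those $\tilde{\mathbf{v}}$ for which no stationary point lies in $\operatorname{supp}\tilde\chi_{\leq\delta}$) is non-stationary and yields $O(|t|^{-1})$ decay by plain integration by parts in the direction of the large gradient, so I concentrate on $\tilde{\mathbf{v}}$ near $\nabla\tilde\varphi(\tilde{\mathbf{K}})$. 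Mimicking \eqref{eq: C Dirac decomposition}, I split
\begin{equation*}
\mathbb{I}_{\mathbf{k}\approx\mathbf{K}}(t;\mathbf{v})=\mathbb{I}^{\textup{near}}(t;\mathbf{v})+\mathbb{I}^{\textup{far}}(t;\mathbf{v})
\end{equation*}
using the partition $\chi_0\big(\tfrac{\kappa_1-h(\kappa_2;\tilde{\mathbf{v}})}{|t|^{-c}}\big)+\chi_1\big(\tfrac{\kappa_1-h(\kappa_2;\tilde{\mathbf{v}})}{|t|^{-c}}\big)\equiv 1$, with $c>0$ to be optimized. On $\mathbb{I}^{\textup{far}}$ one has $|\partial_{\kappa_1}\Psi|\gtrsim|t|^{-c}$, so a single integration by parts in $\kappa_1$ (with the bookkeeping used to control $\mathcal{R}(t)$ in the proof of Lemma \ref{lemma: bound for C_1 Dirac}) yields $|\mathbb{I}^{\textup{far}}|\lesssim|t|^{c-1}$.

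On $\mathbb{I}^{\textup{near}}$, after the shift $\kappa_1\mapsto\kappa_1+h(\kappa_2;\tilde{\mathbf{v}})$ the Taylor expansion of $\Psi$ around the stationary curve produces the normal form $\Phi(\kappa_2;\tilde{\mathbf{v}})+A(\boldsymbol{\kappa};\tilde{\mathbf{v}})\,\kappa_1^2$ with $|A|\sim 1$, where $\Phi(\kappa_2;\tilde{\mathbf{v}}):=\Psi(h(\kappa_2;\tilde{\mathbf{v}}),\kappa_2;\tilde{\mathbf{v}})$ is the reduced phase. Applying the van der Corput / stationary-phase lemma to the $\kappa_1$-integral (which has non-degenerate quadratic phase on a window of size $|t|^{-c}\gg|t|^{-1/2}$) gives the $|t|^{-1/2}$ decay, and applying van der Corput with the third-derivative lower bound $|\partial_{\kappa_2}^3\Phi|\gtrsim 1$ to the outer $\kappa_2$-integral produces the additional $|t|^{-1/3}$ factor. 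Hence $|\mathbb{I}^{\textup{near}}|\lesssim|t|^{-5/6}$, and the choice $c=1/6$ balances $|\mathbb{I}^{\textup{far}}|\lesssim|t|^{-5/6}$, yielding the desired bound.

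The main technical obstacle is verifying $|\partial_{\kappa_2}^3\Phi(\kappa_2;\tilde{\mathbf{v}})|\gtrsim 1$ near $\kappa_2=0$. Differentiating $\partial_{\kappa_1}\Psi(h(\kappa_2;\tilde{\mathbf{v}}),\kappa_2;\tilde{\mathbf{v}})\equiv 0$ at $\kappa_2=0$ yields $h'(0;\tilde{\mathbf{v}})=-\partial_{\kappa_1\kappa_2}\tilde\varphi(\tilde{\mathbf{K}})/\partial_{\kappa_1}^2\tilde\varphi(\tilde{\mathbf{K}})=0$, because $\partial_{\kappa_1\kappa_2}\tilde\varphi(\tilde{\mathbf{K}})=\tilde\alpha_1\tilde\alpha_2/\tilde\varphi^3$ vanishes at $\tilde{\mathbf{K}}$. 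A straightforward chain-rule computation then collapses to $\partial_{\kappa_2}^3\Phi(0;\tilde{\mathbf{v}})=\partial_{\kappa_2}^3\tilde\varphi(\tilde{\mathbf{K}})$, and differentiating $\partial_{\kappa_2}^2\tilde\varphi=-\tilde\alpha_1(\tilde\alpha_2+\tilde\alpha_{12})/\tilde\varphi^3$ once more, using $\tilde\alpha_1(\tilde{\mathbf{K}})=0$ and the identity $\tilde\alpha_2+\tilde\alpha_{12}=\tilde\varphi^2-\tilde\alpha_1=\tilde\varphi^2$ on the curve, gives $\partial_{\kappa_2}^3\tilde\varphi(\tilde{\mathbf{K}})=-\partial_{\kappa_2}\tilde\alpha_1(\tilde{\mathbf{K}})/\tilde\varphi(\tilde{\mathbf{K}})$. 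The trigonometric identity
\begin{equation*}
\partial_{\kappa_2}\tilde\alpha_1=-\sin\tilde k_2+\sin(\tilde k_1-\tilde k_2)=2\cos(\tilde k_1/2)\sin(\tilde k_1/2-\tilde k_2)
\end{equation*}
combined with the constraint $\tilde\alpha_1=0$ forces any common zero either to satisfy $\tilde k_1\equiv\pi\pmod{2\pi}$ (incompatible with $\tilde\alpha_1(\tilde{\mathbf{K}})=0$) or $\tilde k_2\equiv\tilde k_1/2\pmod{\pi}$ with $\tilde k_1\equiv\pm 4\pi/3\pmod{2\pi}$, which, after pulling back via $\mathbf{V}^T$ and reducing modulo $\mathbf{\Lambda}^*$, recovers precisely the Dirac points $\mathcal{K}_3$. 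Since $\mathcal{K}_1'$ excludes $\mathcal{K}_3$, the cubic non-degeneracy holds, and by continuity it persists on a neighborhood of $\kappa_2=0$ uniformly in $\tilde{\mathbf{v}}$, completing the plan.
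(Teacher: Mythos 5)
Your proposal uses the same geometric ingredients as the paper's Section \ref{sec: oscillatory integral localized at a non-intersection point}: the rank-one Hessian in $\tilde{\mathbf{k}}$-coordinates ($|\partial_{\kappa_1}^2\tilde\varphi(\tilde{\mathbf{K}})|\gtrsim 1$), the cubic nondegeneracy in the null direction ($|\partial_{\kappa_2}^3\tilde\varphi(\tilde{\mathbf{K}})|\gtrsim 1$, which is precisely Lemma~\ref{lemma: asymptotic of phase (K1 case)} --- your factorization of $\partial_{\tilde{k}_2}\tilde\alpha_1$ and identification of its common zeros with the Dirac points is a valid alternative to the paper's contradiction argument), a stationary curve from the implicit function theorem, and a near/far split. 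The execution, however, is genuinely different. The paper reduces to a model phase $\phi_1(\mathbf{k})=k_1^2+\mathcal{O}_3(\mathbf{k})$, changes variables once more so that the quadratic piece in the Morse direction becomes \emph{exactly} $k_1^2$ (so the phase separates: $\tilde\phi_1=(\text{function of }k_2)+k_1^2$), and cuts $k_1$ at scale $|t|^{-1/2}$; the near part is then simply the $|t|^{-1/2}$ measure of the $k_1$-window times van der Corput in $k_2$, and the far part is one integration by parts in $k_1$ followed by van der Corput in $k_2$ (the separation makes the second step painless). You cut at $|t|^{-1/6}$ and keep the normal form $\Phi(\kappa_2)+A(\boldsymbol{\kappa})\kappa_1^2$ with a $\boldsymbol{\kappa}$-dependent coefficient, so your far part closes from integration by parts alone, which is slightly cheaper, but your near part becomes harder, as explained below.

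The near-part argument as stated has a gap. Van der Corput yields only a pointwise \emph{bound} on the $\kappa_1$-integral, not an asymptotic, so you cannot ``apply van der Corput in $\kappa_1$ and then van der Corput in $\kappa_2$'' and multiply the gains. What you need (and your phrase ``van der Corput / stationary-phase lemma'' hints at) is the full one-dimensional stationary phase \emph{expansion} in $\kappa_1$: the leading term $\sim |t|^{-1/2}\,e^{-it\Phi(\kappa_2)}\,\tilde\chi(0,\kappa_2)/\sqrt{|A(0,\kappa_2)|}$ does produce $|t|^{-5/6}$ after van der Corput in $\kappa_2$, but you must also bound the remainder, whose $\kappa_1$-derivatives acquire factors of $|t|^{c}$ from the $|t|^{-c}$-width cut-off; this is workable for $c=1/6$ (the remainder ends up strictly smaller than $|t|^{-5/6}$), but it is precisely the bookkeeping the paper's exact separation of variables and $|t|^{-1/2}$ cut are designed to avoid, and it is absent from your sketch. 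A secondary imprecision: the implicit function theorem gives $h'(0;\tilde{\mathbf{v}})$ as a ratio of derivatives of $\tilde\varphi$ evaluated at the stationary point $(\tilde{\mathbf{K}}_1+h(0;\tilde{\mathbf{v}}),\tilde{\mathbf{K}}_2)$, not at $\tilde{\mathbf{K}}$, and $h(0;\tilde{\mathbf{v}})\neq 0$ in general, so your identity $\partial_{\kappa_2}^3\Phi(0)=\partial_{\kappa_2}^3\tilde\varphi(\tilde{\mathbf{K}})$ holds only up to a controlled $O(\delta)$ perturbation --- harmless by continuity, but it should be stated rather than asserted exactly.
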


\begin{remark}\label{remark: choice of delta for K_1}
$(i)$ By the reduction in Section \ref{sec: reduction to the oscillatory integral estimates}, Theorem \ref{thm: oscillatory integral estimates} for $\mathbf{K}\in\mathcal{K}_1$ follows from Theorem \ref{thm: near a degenerate frequency, not an intersection}.\\
$(ii)$ In Theorem \ref{thm: near a degenerate frequency, not an intersection}, we choose $\delta>0$ so that the distance from $\mathbf{K}$ to $\mathbf{K}_2$ and that to $\mathbf{K}_\star$ are larger than $2\delta$. Hence, the choice of $\delta>0$ may depend on $\mathbf{K}$.
\end{remark}

\subsection{Reduction to the degenerate oscillatory integral}
We are concerned with the integral
$$\mathbb{I}_{\mathbf{k}\approx\mathbf{K}}(t; \mathbf{v}):=e^{i\mathbf{K}\cdot\mathbf{v}}\int_{-\infty}^\infty\int_{-\infty}^\infty e^{-it(\varphi(\mathbf{K}+\mathbf{k})-\mathbf{v}\cdot \mathbf{k})}\chi_{\leq\delta}(\mathbf{k})d\mathbf{k},$$
where $\chi_{\leq\delta}$ is a smooth cut-off to be chosen below. Here, taking sufficiently small $\delta>0$ so that the support of $\chi_{\leq\delta}$ does not contain any intersection of degenerate frequency curves, we may assume that 
$$|\alpha_{2}(\mathbf{K})|, |\alpha_{12}(\mathbf{K})|\gtrsim_\delta 1$$ 
(see Figure \ref{fig: first primitive cell} and Remark \ref{remark: choice of delta for K_1}). For this integral, we change the variables by $\tilde{\mathbf{k}}=(\mathbf{k}\cdot\mathbf{v}_1, \mathbf{k}\cdot\mathbf{v}_2)$ as we did in Section \ref{sec: reduction, near an intersection of two curves}. Then, we obtain
\begin{equation}\label{eq: integral for K1}
\mathbb{I}_{\mathbf{k}\approx\mathbf{K}}(t; \mathbf{v})=\frac{2}{\sqrt{3}}e^{i\mathbf{K}\cdot\mathbf{v}}\int_{-\infty}^\infty\int_{-\infty}^\infty e^{-it(\tilde{\varphi}(\tilde{\mathbf{K}}+\tilde{\mathbf{k}})-\tilde{\mathbf{v}}\cdot\tilde{\mathbf{k}})}\tilde{\chi}_{\leq\delta}(\tilde{\mathbf{k}})d\tilde{\mathbf{k}},
\end{equation}
where $\tilde{\mathbf{K}}=(\mathbf{K}\cdot\mathbf{v}_1, \mathbf{K}\cdot\mathbf{v}_2)$, $\tilde{\mathbf{v}}=(\frac{1}{\sqrt{3}}v_1+v_2, \frac{1}{\sqrt{3}}v_1-v_2)$ and 
$$\tilde{\varphi}(\tilde{\mathbf{k}})=\sqrt{3+2\cos \tilde{k}_1 +2\cos \tilde{k}_2 +2\cos(\tilde{k}_1-\tilde{k}_2)}.$$
Accordingly, $\tilde{\alpha}_{1}(\tilde{\mathbf{k}})=1+\cos \tilde{k}_2+\cos(\tilde{k}_1-\tilde{k}_2)$, $\tilde{\alpha}_{2}(\tilde{\mathbf{k}})=1+\cos \tilde{k}_1+\cos(\tilde{k}_1-\tilde{k}_2)$ and $\tilde{\alpha}_{12}(\tilde{\mathbf{k}})=1+\cos \tilde{k}_1+\cos \tilde{k}_2$ correspond to the three key functions $\alpha_{1}(\mathbf{k})$, $\alpha_{2}(\mathbf{k})$ and $\alpha_{12}(\mathbf{k})$ respectively. Thus, we have $\tilde{\alpha}_1(\tilde{\mathbf{K}})=0$ and $|\tilde{\alpha}_{2}(\tilde{\mathbf{K}})|, |\tilde{\alpha}_{12}(\tilde{\mathbf{K}})|\gtrsim_\delta 1$.

For the integral $\mathbb{I}_{\mathbf{k}\approx\mathbf{K}}(t; \mathbf{v})$, it is important to know the asymptotic of the phase $\tilde{\varphi}(\tilde{\mathbf{K}}+\tilde{\mathbf{k}})$.
\begin{lemma}[Asymptotic of $\tilde{\varphi}(\tilde{\mathbf{K}}+\tilde{\mathbf{k}})$]\label{lemma: asymptotic of phase (K1 case)}
If $\tilde{\alpha}_1(\tilde{\mathbf{K}})=0$ and $|\tilde{\alpha}_{2}(\tilde{\mathbf{K}})|, |\tilde{\alpha}_{12}(\tilde{\mathbf{K}})|\gtrsim 1$, then near $\tilde{\mathbf{k}}=\mathbf{0}$,
$$\begin{aligned}
\tilde\varphi(\tilde{\mathbf{K}}+\tilde{\mathbf{k}})&=\tilde\varphi(\tilde{\mathbf{K}})+(\nabla\tilde{\varphi})(\tilde{\mathbf{K}})\cdot\tilde{\mathbf{k}}+\frac{(\partial_{\tilde{k}_1}^2\tilde{\varphi})(\tilde{\mathbf{K}})}{2}(\tilde{k}_1)^2\\
&\quad+\sum_{m_1+m_2\geq3}\frac{(\partial_{\tilde{k}_1}^{m_1}\partial_{\tilde{k}_2}^{m_2}\tilde{\varphi})(\tilde{\mathbf{K}})}{(m_1)!(m_2)!}(\tilde{k}_1)^{m_1}(\tilde{k}_2)^{m_2}
\end{aligned}$$
with $|(\partial_{\tilde{k}_1}^2\tilde{\varphi})(\tilde{\mathbf{K}})|\gtrsim 1$ and $ |(\partial_{\tilde{k}_2}^3\tilde{\varphi})(\tilde{\mathbf{K}})|\gtrsim 1$.
\end{lemma}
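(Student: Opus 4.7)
The plan is to derive the claimed expansion by combining Taylor's theorem with the closed-form Hessian obtained in Theorem~\ref{thm: gradient and hessian}, and then to extract the third-order information by directly differentiating the formula for $\partial_{\tilde{k}_2}^2 \tilde{\varphi}$ found in Section~\ref{sec: Proof of phase formulae}. First, since $|\tilde{\alpha}_2(\tilde{\mathbf{K}})|, |\tilde{\alpha}_{12}(\tilde{\mathbf{K}})| \gtrsim 1$ and $\tilde{\alpha}_1(\tilde{\mathbf{K}}) = 0$, the identity $\tilde{\varphi}^2 = \tilde{\alpha}_1 + \tilde{\alpha}_2 + \tilde{\alpha}_{12}$ gives $\tilde{\varphi}(\tilde{\mathbf{K}})^2 = \tilde{\alpha}_2(\tilde{\mathbf{K}}) + \tilde{\alpha}_{12}(\tilde{\mathbf{K}}) \gtrsim 1$, so $\tilde{\varphi}$ is real-analytic in a neighborhood of $\tilde{\mathbf{K}}$ and Taylor's theorem justifies the displayed multi-index series. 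It therefore suffices to identify which coefficients vanish and which are uniformly bounded below.

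For the quadratic part, substituting $\tilde{\alpha}_1(\tilde{\mathbf{K}}) = 0$ into the Hessian formula \eqref{eq: Hessian of phase in k tilde} collapses the matrix to
\[
\nabla_{\tilde{\mathbf{k}}}^2 \tilde{\varphi}(\tilde{\mathbf{K}}) = \frac{1}{\tilde{\varphi}(\tilde{\mathbf{K}})^3}\begin{bmatrix} -\tilde{\alpha}_2(\tilde{\mathbf{K}})\,\tilde{\alpha}_{12}(\tilde{\mathbf{K}}) & 0 \\ 0 & 0 \end{bmatrix}.
\]
This immediately kills both the $\tilde{k}_1\tilde{k}_2$ and $(\tilde{k}_2)^2$ coefficients in the Taylor series, leaving only the displayed $(\tilde{k}_1)^2$ term, and it shows $|(\partial_{\tilde{k}_1}^2\tilde{\varphi})(\tilde{\mathbf{K}})| \gtrsim 1$ since every factor on the right-hand side is bounded below in absolute value by the hypothesis. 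No further work is needed at quadratic order.

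To obtain $|(\partial_{\tilde{k}_2}^3 \tilde{\varphi})(\tilde{\mathbf{K}})| \gtrsim 1$, I would differentiate the closed form $\partial_{\tilde{k}_2}^2 \tilde{\varphi} = -\tilde{\alpha}_1(\tilde{\alpha}_2 + \tilde{\alpha}_{12})/\tilde{\varphi}^3$ once more in $\tilde{k}_2$. The product rule produces three terms; two carry an explicit factor of $\tilde{\alpha}_1$ and hence vanish at $\tilde{\mathbf{K}}$, leaving only the one where $\partial_{\tilde{k}_2}$ hits $\tilde{\alpha}_1$. Using $(\tilde{\alpha}_2+\tilde{\alpha}_{12})(\tilde{\mathbf{K}}) = \tilde{\varphi}(\tilde{\mathbf{K}})^2$, this collapses to
\[
(\partial_{\tilde{k}_2}^3 \tilde{\varphi})(\tilde{\mathbf{K}}) = -\frac{(\partial_{\tilde{k}_2}\tilde{\alpha}_1)(\tilde{\mathbf{K}})}{\tilde{\varphi}(\tilde{\mathbf{K}})},
\]
reducing the whole problem to proving $(\partial_{\tilde{k}_2}\tilde{\alpha}_1)(\tilde{\mathbf{K}}) \neq 0$.

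This last step is the main obstacle and is the only place a genuine case analysis enters. Writing $a = \tilde{K}_2$ and $b = \tilde{K}_1 - \tilde{K}_2$, the simultaneous conditions $\tilde{\alpha}_1(\tilde{\mathbf{K}}) = 0$ and $(\partial_{\tilde{k}_2}\tilde{\alpha}_1)(\tilde{\mathbf{K}}) = 0$ become $1 + \cos a + \cos b = 0$ and $\sin a = \sin b$. The branch $a \equiv \pi - b \pmod{2\pi}$ gives $\cos a = -\cos b$, contradicting the first equation, so only $a \equiv b \pmod{2\pi}$ is possible; this forces $\cos a = -\tfrac{1}{2}$ and $\tilde{K}_1 \equiv 2\tilde{K}_2 \pmod{2\pi}$. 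Converting back via $\mathbf{K} = \mathbf{V}^{-\textup{T}}\tilde{\mathbf{K}}$, every such $\tilde{\mathbf{K}}$ lies in the $\mathbf{\Lambda}^*$-orbit of the two Dirac points $(0, \pm\tfrac{4\pi}{3})$, which is excluded by the assumption $\mathbf{K} \in \mathcal{K}_1'$. Hence $(\partial_{\tilde{k}_2}\tilde{\alpha}_1)(\tilde{\mathbf{K}}) \neq 0$, and consequently $|(\partial_{\tilde{k}_2}^3 \tilde{\varphi})(\tilde{\mathbf{K}})| \gtrsim 1$ with constants that may depend pointwise on $\tilde{\mathbf{K}}$ (which is permitted, as Theorem~\ref{thm: near a degenerate frequency, not an intersection} fixes $\mathbf{K}$ and chooses $\delta$ accordingly, cf.\ Remark~\ref{remark: choice of delta for K_1}).
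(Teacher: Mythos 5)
Your proof is correct and follows essentially the same route as the paper's: collapse the Hessian using $\tilde{\alpha}_1(\tilde{\mathbf{K}})=0$ to get the quadratic part and the lower bound $|(\partial_{\tilde{k}_1}^2\tilde{\varphi})(\tilde{\mathbf{K}})|\gtrsim 1$, then differentiate $\partial_{\tilde{k}_2}^2\tilde{\varphi}=-\tilde{\alpha}_1(\tilde{\alpha}_2+\tilde{\alpha}_{12})/\tilde{\varphi}^3$ once more and evaluate at $\tilde{\mathbf{K}}$ to reduce everything to showing $(\partial_{\tilde{k}_2}\tilde{\alpha}_1)(\tilde{\mathbf{K}})\neq 0$, and finish by a trigonometric contradiction. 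The one place you diverge is the last step: after forcing $\cos\tilde{K}_2=-\tfrac12$ and $\tilde{K}_1\equiv 2\tilde{K}_2$, you recognize these as the Dirac points and invoke the exclusion $\mathbf{K}\in\mathcal{K}_1'$, whereas the paper stays entirely inside the lemma's own hypotheses by computing $\cos\tilde{K}_1=-\tfrac12$ (via the angle-addition formula) and concluding $\tilde{\alpha}_2(\tilde{\mathbf{K}})=0$, directly contradicting $|\tilde{\alpha}_2(\tilde{\mathbf{K}})|\gtrsim 1$. Both versions reach the same contradiction; the paper's is slightly more self-contained (it does not need to identify the offending points as Dirac points, and avoids reaching outside the lemma's stated hypotheses), but your identification is correct and arguably more illuminating. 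A small tightening: rather than appealing to $\mathbf{K}\in\mathcal{K}_1'$ (a hypothesis of Theorem~\ref{thm: near a degenerate frequency, not an intersection}, not of this lemma), you could simply note that at a Dirac point $\tilde{\alpha}_2$ also vanishes, which contradicts the lemma's own assumption.
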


\begin{proof}
By Theorem \ref{thm: gradient and hessian} and the assumptions, we observe that
$$\nabla_{\tilde{\mathbf{k}}}^2\tilde{\varphi}=\begin{bmatrix} 
-\frac{\tilde{\alpha}_{2}(\tilde{\alpha}_1+\tilde{\alpha}_{12})}{\tilde{\varphi}^3}  & \frac{\tilde{\alpha}_{1}\tilde{\alpha}_{2}}{\tilde{\varphi}^3} \\
\frac{\tilde{\alpha}_{1}\tilde{\alpha}_{2}}{\tilde{\varphi}^3} & -\frac{\tilde{\alpha}_{1}(\tilde{\alpha}_2+\tilde{\alpha}_{12})}{\tilde{\varphi}^3}
\end{bmatrix}\quad\Rightarrow\quad (\nabla_{\tilde{\mathbf{k}}}^2\tilde{\varphi})(\tilde{\mathbf{K}})=\begin{bmatrix} 
-\frac{\tilde{\alpha}_{2}(\tilde{\mathbf{K}})\tilde{\alpha}_{12}(\tilde{\mathbf{K}})}{\tilde{\varphi}(\tilde{\mathbf{K}})^3}  & 0 \\
0 & 0
\end{bmatrix}$$
with $|(\partial_{\tilde{k}_1}^2\tilde{\varphi})(\tilde{\mathbf{K}})|=|\frac{\tilde{\alpha}_{2}(\tilde{\mathbf{K}})\tilde{\alpha}_{12}(\tilde{\mathbf{K}})}{\tilde{\varphi}(\tilde{\mathbf{K}})^3}|\gtrsim 1$. Hence, for the lemma, it suffices to show that $|(\partial_{\tilde{k}_2}^3\tilde{\varphi})(\tilde{\mathbf{K}})|\gtrsim1$. Indeed, differentiating $\partial_{\tilde{k}_2}^2\tilde{\varphi}=-\frac{\tilde{\alpha}_{1}(\tilde{\alpha}_2+\tilde{\alpha}_{12})}{\tilde{\varphi}^3}$ and inserting $\tilde{\mathbf{k}}=\tilde{\mathbf{K}}$ with $\tilde{\alpha}_1(\tilde{\mathbf{K}})=0$, we obtain 
$$(\partial_{\tilde{k}_2}^3\tilde{\varphi})(\tilde{\mathbf{K}})=-\frac{(\partial_{\tilde{k}_2}\tilde{\alpha}_{1})(\tilde{\mathbf{K}})(\tilde{\alpha}_2(\tilde{\mathbf{K}})+\tilde{\alpha}_{12}(\tilde{\mathbf{K}}))}{\tilde{\varphi}(\tilde{\mathbf{K}})^3}=-\frac{(\partial_{\tilde{k}_2}\tilde{\alpha}_{1})(\tilde{\mathbf{K}})}{\tilde{\varphi}(\tilde{\mathbf{K}})},$$
where in the last step, we used that $\tilde{\varphi}(\tilde{\mathbf{K}})^2=\tilde{\alpha}_1(\tilde{\mathbf{K}})+\tilde{\alpha}_2(\tilde{\mathbf{K}})+\tilde{\alpha}_{12}(\tilde{\mathbf{K}})=\tilde{\alpha}_2(\tilde{\mathbf{K}})+\tilde{\alpha}_{12}(\tilde{\mathbf{K}})$ by the definition of $\varphi(\mathbf{k})$. Note that $\tilde{\varphi}(\tilde{\mathbf{K}})\neq0$, because by the change of variables, $\tilde{\varphi}(\tilde{\mathbf{K}})=0$ is equivalent to $\varphi(\mathbf{K})=0$ if and only if $\mathbf{K}$ is the intersection of the three degenerate frequency curves. Hence, for $|(\partial_{\tilde{k}_2}^3\tilde{\varphi})(\tilde{\mathbf{K}})|\gtrsim1$, it remains to show that $(\partial_{\tilde{k}_2}\tilde{\alpha}_{1})(\tilde{\mathbf{K}})\neq0$.

For contradiction, we assume that 
\begin{equation}\label{eq: lemma: asymptotic of phase (K1 case) contradiction assumption}
(\partial_{\tilde{k}_2}\tilde{\alpha}_{1})(\tilde{\mathbf{K}})=\partial_{\tilde{k}_2}\big(1+\cos \tilde{k}_2+\cos(\tilde{k}_1-\tilde{k}_2)\big)\big|_{\tilde{\mathbf{k}}=\tilde{\mathbf{K}}}=-\sin (\tilde{K}_2)+\sin (\tilde{K}_1-\tilde{K}_2)=0,
\end{equation}
where $\tilde{\mathbf{K}}=(\tilde{K}_1, \tilde{K}_2)$. On the other hand, by the assumption, we have $\tilde{\alpha}_{1}(\tilde{\mathbf{K}})=1+\cos (\tilde{K}_2)+\cos(\tilde{K}_1-\tilde{K}_2)=0$. Hence, we obtain $1=\cos^2(\tilde{K}_1-\tilde{K}_2)+\sin^2(\tilde{K}_1-\tilde{K}_2)=(1+\cos (\tilde{K}_2))^2+\sin^2 (\tilde{K}_2)=2+2\cos(\tilde{K}_2)$. Thus, it follows that $\cos(\tilde{K}_2)=\cos(\tilde{K}_1-\tilde{K}_2)=-\frac{1}{2}$ and $\sin (\tilde{K}_2)=\sin (\tilde{K}_1-\tilde{K}_2)=\pm\frac{\sqrt{3}}{2}$ (by \eqref{eq: lemma: asymptotic of phase (K1 case) contradiction assumption}, they have same sign). By the trigonometric identities, we have
$$\begin{aligned}
\cos(\tilde{K}_1)&=\cos((\tilde{K}_1-\tilde{K}_2)+\tilde{K}_2)\\
&=\cos(\tilde{K}_1-\tilde{K}_2)\cos \tilde{K}_2-\sin(\tilde{K}_1-\tilde{K}_2)\sin \tilde{K}_2\\
&=\bigg(-\frac{1}{2}\bigg)^2-\bigg(\frac{\sqrt{3}}{2}\bigg)^2=-\frac{1}{2}.
\end{aligned}$$
Consequently, $\tilde{\alpha}_{2}(\tilde{\mathbf{K}})=1+\cos \tilde{K}_1+\cos(\tilde{K}_1-\tilde{K}_2)=0$, which leads to a contradiction.  
\end{proof}

Generalizing the integral on the right hand side of \eqref{eq: integral for K1} up to simple changes of variables, for sufficiently small $\delta>0$, we define 
\begin{equation}\label{eq: I1 integral}
\mathbb{I}_1(t;\mathbf{v})=\int_{-\infty}^\infty\int_{-\infty}^\infty e^{-it(\phi_1(\mathbf{k})-\mathbf{v}\cdot\mathbf{k})}\chi_0\bigg(\frac{\sqrt{|k_1|^2+|k_2|^3}}{\delta}\bigg)d\mathbf{k},
\end{equation}
where
$$\phi_1(\mathbf{k})=k_1^2+\sum_{m_1+m_2\geq 3}c_{m_1, m_2}k_1^{m_1}k_2^{m_2}$$
and $c_{0,3}=\pm1$, assuming that there exists $c>0$ such that
\begin{equation}\label{eq: coefficient assumption for no intersection}
|c_{m_1, m_2}|\leq c^{|m_1|+|m_2|}.
\end{equation}
Indeed, if necessary, replacing $\delta$ by $\delta^2$ in $\mathbb{I}_1(t;\mathbf{v})$ and then rescaling by $(t, \mathbf{v}, k_1, k_2)\mapsto(\delta^2 t, \delta^{-1}\mathbf{v},\delta k_1, \delta^{\frac{2}{3}}k_2)$, we may assume that $c>0$ is sufficiently small in \eqref{eq: coefficient assumption for no intersection}, provided that $\delta$ is much smaller, that is, $0<\delta^{\frac{2}{3}}\ll c\ll1$. Then, Theorem \ref{thm: near a degenerate frequency, not an intersection} follows from the following proposition.

\begin{proposition}\label{prop: no intersection}
Under the assumption \eqref{eq: coefficient assumption for no intersection} with small $c>0$, we have
$$\sup_{\mathbf{v}\in\R^2}|\mathbb{I}_1(t;\mathbf{v})| \lesssim(1+|t|)^{-\frac56}.$$
\end{proposition}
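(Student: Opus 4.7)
\emph{Proof plan.} The phase has the model structure $\phi_1(\mathbf{k})=k_1^2+c_{0,3}k_2^3+\mathcal{O}_4(\mathbf{k})$ with $c_{0,3}=\pm 1$, so the $k_1$-direction is non-degenerate (Morse) while the $k_2$-direction is cubically degenerate. I plan to extract $t^{-1/2}$ from stationary phase in $k_1$ and $t^{-1/3}$ from van der Corput's third-derivative test in $k_2$, multiplying to the desired $t^{-5/6}$. The smallness hypothesis $|c_{m_1,m_2}|\leq c^{m_1+m_2}$ ensures that the higher-order terms are controlled perturbations of $k_1^2\pm k_2^3$. Throughout, I may assume $|t|\geq 1$.

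The first step is to eliminate the regime in which there is no critical point in the $k_1$-direction. Since $|k_1|\lesssim\delta$ and $|k_2|\lesssim\delta^{2/3}$ in the support of the cutoff and $\partial_{k_1}\phi_1=2k_1+\mathcal{O}_2(\mathbf{k})$, for $|v_1|\geq C\delta$ with $C$ sufficiently large we have $|\partial_{k_1}(\phi_1-\mathbf{v}\cdot\mathbf{k})|\gtrsim|v_1|$ throughout the support, and iterated integration by parts in $k_1$ yields $|\mathbb{I}_1|\lesssim_N(|v_1|t)^{-N}$ for any $N$, beating $t^{-5/6}$. For $|v_1|\leq C\delta$, the bound $\partial_{k_1}^2\phi_1=2+\mathcal{O}_1(\mathbf{k})\geq 3/2$ (valid for small $c$) and the implicit function theorem supply an analytic critical curve $k_1=\kappa(k_2;v_1)$, $|\kappa|\lesssim\delta$, solving $\partial_{k_1}\phi_1(\kappa,k_2)=v_1$. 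Shifting $k_1\mapsto k_1+\kappa(k_2;v_1)$ and Taylor-expanding in $k_1$ around the curve,
$$\phi_1(\mathbf{k})-\mathbf{v}\cdot\mathbf{k}=\tilde{\phi}(k_2;\mathbf{v})+A(k_2)k_1^2\bigl(1+k_1B(\mathbf{k};\mathbf{v})\bigr),$$
where $\tilde{\phi}(k_2;\mathbf{v}):=\phi_1(\kappa,k_2)-v_1\kappa-v_2k_2$, $A=\tfrac12\partial_{k_1}^2\phi_1|_{k_1=\kappa}=1+\mathcal{O}_1$, and $B$ is smooth and uniformly bounded. A smooth Morse-type change of variables $k_1\mapsto\tilde{k}_1$, with Jacobian bounded above and below, then normalizes the $k_1$-part of the phase to $\tilde{k}_1^2$.

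The integral becomes $\mathbb{I}_1=\int e^{-it\tilde{\phi}(k_2;\mathbf{v})}\Bigl[\int e^{-it\tilde{k}_1^2}\tilde{b}(\tilde{k}_1,k_2;\mathbf{v})\,d\tilde{k}_1\Bigr]dk_2$, with $\tilde{b}$ smooth and compactly supported uniformly in $\mathbf{v}$. A Fresnel expansion in $\tilde{k}_1$ gives $\int e^{-it\tilde{k}_1^2}\tilde{b}\,d\tilde{k}_1=\sqrt{\pi/t}\,e^{-i\pi/4}\tilde{b}(0,k_2;\mathbf{v})+\mathcal{R}(k_2,t)$ with $|\mathcal{R}|\lesssim t^{-3/2}$. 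The identity $\partial_{k_1}\phi_1|_{\kappa}=v_1$ makes the $\kappa'$-terms in $\tilde{\phi}'$ cancel, so $\tilde{\phi}'(k_2)=\partial_{k_2}\phi_1(\kappa,k_2)-v_2$; differentiating twice more and combining $\partial_{k_2}^3\phi_1(\mathbf{0})=6c_{0,3}=\pm 6$ with $|\kappa|\leq C\delta$ and the smallness of the remaining coefficients $c_{m_1,m_2}$, I obtain $|\tilde{\phi}'''(k_2;\mathbf{v})|\gtrsim 1$ uniformly. Van der Corput's lemma then yields $\bigl|\int e^{-it\tilde{\phi}}\tilde{b}(0,\cdot;\mathbf{v})\,dk_2\bigr|\lesssim t^{-1/3}$, so $|\mathbb{I}_1|\lesssim t^{-1/2}\cdot t^{-1/3}+t^{-3/2}\lesssim t^{-5/6}$. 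The chief technical obstacle is establishing the Fresnel error bound $|\mathcal{R}|\lesssim t^{-3/2}$ uniformly in $\mathbf{v}$: the amplitude $\tilde{b}$ and the Morse change of variables both depend on $v_1$ through $\kappa$, so a fixed number of $\tilde{k}_1$-derivatives of $\tilde{b}$ must be controlled uniformly in $k_2$ and $v_1$, for which the smallness of the coefficients $c_{m_1,m_2}$ is indispensable.
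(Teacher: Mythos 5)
Your proposal is correct and tracks the paper's argument step for step: reduce to $|v_1|\lesssim\delta$, construct the $k_1$-critical curve $\kappa(k_2;v_1)$ (the paper's $\tilde\gamma$) via the implicit function theorem, Morse-normalize so the phase becomes $\tilde\phi(k_2;\mathbf{v})+\tilde k_1^2$, verify $|\partial_{k_2}^3\tilde\phi|\gtrsim 1$ from $c_{0,3}=\pm1$ together with the smallness assumption \eqref{eq: coefficient assumption for no intersection}, and multiply $t^{-1/2}$ from $k_1$ against $t^{-1/3}$ from van der Corput in $k_2$. The one place you diverge is in extracting the $t^{-1/2}$: the paper splits the $\tilde k_1$-integral at scale $|t|^{-1/2}$ (van der Corput in $k_2$ on the near piece; one integration by parts in $k_1$ followed by van der Corput in $k_2$ on the far piece), whereas you invoke a one-term stationary-phase (Fresnel) expansion with $O(|t|^{-3/2})$ remainder; the two devices are interchangeable here, and you correctly flag uniform control of the amplitude's $\tilde k_1$-derivatives in $\mathbf{v}$ as the residual point, which the paper secures by deriving the explicit bounds $|\tilde\gamma^{(j)}(k_2)|\lesssim\delta_0$ for all $j$.
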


\begin{remark}
By Lemma \ref{lemma: asymptotic of phase (K1 case)}, one can associate the phase $\phi_1(\mathbf{k})$ with a Newton polygon in Varchenko's theorem \cite{Varcenko}. Then, Proposition \ref{prop: no intersection} follows together with Karpushkin's stability theorem \cite{Karpushkin 1, Karpushkin 2}.
\end{remark}

\subsection{Direct proof of Proposition \ref{prop: no intersection}}

We may assume that $|\mathbf{v}|\leq 10\delta$, because otherwise it is easy to show that $|\mathbb{I}_1(t;\mathbf{v})|\lesssim \min \{1,|t|^{-1}\}$ by integration by parts once. Under this assumption, we construct the curve $k_1=\gamma(k_2)$ where the phase is stationary in $k_1$, that is,  
\begin{equation}\label{eq: gamma for no intersection}
(\partial_{k_1}\phi_1)(\gamma(k_2), k_2)=0,
\end{equation}
as follows. First, by direct calculations, we note that 
\begin{equation}\label{eq: phi 3 derivatives}
\left\{\begin{aligned}
(\partial_{k_1}\phi_1)(\mathbf{k})&=2k_1+\sum_{m_1+m_2\geq 2} (m_1+1)c_{m_1+1, m_2}k_1^{m_1}k_2^{m_2},\\
(\partial_{k_1}^2\phi_1)(\mathbf{k})&=2+\sum_{m_1+m_2\geq 1} (m_1+2)(m_1+1)c_{m_1+2, m_2}k_1^{m_1}k_2^{m_2}\approx2,\\
(\partial_{k_1}\partial_{k_2}\phi_1)(\mathbf{k})&=\sum_{m_1+m_2\geq 1} (m_1+1)(m_2+1)c_{m_1+1, m_2+1}k_1^{m_1}k_2^{m_2}.
\end{aligned}\right.
\end{equation}
Hence, it follows from the implicit function theorem that for any $k_2$ sufficiently close to $0$, there exist small $\delta_0>0$ and a unique $C^1$-function $\gamma: (-\delta_0, \delta_0)\to\mathbb{R}$ such that $\gamma(0)=0$, 
 $(\partial_{k_1}\phi_1)(\gamma(k_2), k_2)=0$, and $\gamma'(k_2)=-\frac{(\partial_{k_1}\partial_{k_2}\phi_1)(\gamma(k_2), k_2)}{(\partial_{k_1}^2\phi_1)(\gamma(k_2), k_2)}$. Note here that a small $\delta_0>0$ can be chosen independently of $c, \delta>0$, and so we may assume that $0<\delta\ll c\ll \delta_0\ll1$, because $(\partial_{k_1}^2\phi_1)(\mathbf{k})\approx 2$ when $|\mathbf{k}|\lesssim\delta^{\frac{2}{3}}\ll c\ll 1$. By construction and \eqref{eq: phi 3 derivatives}, we have $|\gamma(k_2)|\lesssim \delta_0$ and $|\gamma'(k_2)|\lesssim \delta_0$. Moreover, one can show that $|\gamma^{(j)}(k_2)|\lesssim (2c)^j\leq (\delta_0)^j$ for $j\geq 2$, because $j$-times differentiation of $\gamma'(k_2)=-\frac{(\partial_{k_1}\partial_{k_2}\phi_1)(\gamma(k_2), k_2)}{(\partial_{k_1}^2\phi_1)(\gamma(k_2), k_2)}$ with the series expansion \eqref{eq: phi 3 derivatives} generates at most polynomially increasing number of terms, but we also have \eqref{eq: coefficient assumption for no intersection} with small $c>0$.
 
Next, using \eqref{eq: gamma for no intersection}, we construct the curve $k_1=\tilde{\gamma}(k_2)$ where the phase including the linear term is stationary in $k_1$, i.e., 
\begin{equation}\label{eq: gamma tilde for no intersection}
\big(\partial_{k_1}(\phi_1-\mathbf{v}\cdot\mathbf{k})\big)(\tilde{\gamma}(k_2), k_2)=0.
\end{equation}
Indeed, for each $k_2$, expanding the Taylor series expansion around $k_1=\gamma(k_2)$, we write the equation $(\partial_{k_1}(\phi_1-\mathbf{v}\cdot\mathbf{k}))(\cdot, k_2)=0$ as 
\begin{equation}\label{eq: phase equation (K1 case)}
\begin{aligned}
0&=\sum_{j=1}^\infty \frac{(\partial_{k_1}^{j+1}\phi_1)(\gamma(k_2), k_2)}{j!}(k_1-\gamma(k_2))^j-v_1
\end{aligned}
\end{equation}
or equivalently, 
$$(k_1-\gamma(k_2))\Bigg\{1+\sum_{j=1}^\infty \frac{(\partial_{k_1}^{j+2}\phi_1)(\gamma(k_2), k_2)}{(j+1)!(\partial_{k_1}^2\phi_1)(\gamma(k_2), k_2)}(k_1-\gamma(k_2))^j\Bigg\}=\frac{v_1}{(\partial_{k_1}^2\phi_1)(\gamma(k_2), k_2)}.$$
Indeed, since $|v_1|\leq 10\delta$, $\partial_{k_1}^2\phi_1\approx 2$ and $|(\partial_{k_1}^{j+2}\phi_1)(\gamma(k_2), k_2)|\lesssim c^{j+2}$, we can construct the solution $\tilde{\gamma}(k_2)=\gamma(k_2)+\sum_{m=1}^\infty d_m(k_2)v_1^m$ to \eqref{eq: phase equation (K1 case)} such that $(\partial_{k_1}\phi_1)(\tilde{\gamma}(k_2),k_2)-v_1=0$, each $d_m(k_2)$ is analytic and $|d_m^{(j)}(k_2)|\lesssim_j (\delta_0)^m$. Therefore, it follows that \eqref{eq: gamma tilde for no intersection} holds as well as
\begin{equation}\label{eq: gamma tilde derivatives for no intersection}
|\tilde{\gamma}^{(j)}(k_2)|\lesssim\delta_0\quad\textup{for all }j.
\end{equation}

Now, fixing $k_2$, we expand the phase $\phi_1(\mathbf{k})-\mathbf{v}\cdot\mathbf{k}$ around $k_1=\tilde{\gamma}(k_2)$ as 
$$\phi_1(\mathbf{k})-\mathbf{v}\cdot\mathbf{k}=\phi_1(\tilde{\gamma}(k_2), k_2)-v_1 \tilde{\gamma}(k_2)-v_2k_2+\sum_{j=2}^\infty\frac{(\partial_{k_1}^j\phi_1)(\tilde{\gamma}(k_2), k_2)}{j!}(k_1-\tilde{\gamma}(k_2))^j.$$
Note that the right-hand side series does not include a linear term in $k_1-\tilde{\gamma}(k_2)$. Moreover, since $\partial_{k_1}^2\phi_1\approx 2$, we can change the variable in the integral $\mathbb{I}_1(t,\mathbf{v})$ by 
$$\begin{aligned}
\tilde{k}_1^2&=\sum_{j=2}^\infty\frac{(\partial_{k_1}^j\phi_1)(\tilde{\gamma}(k_2), k_2)}{j!}(k_1-\tilde{\gamma}(k_2))^j\\
&=\frac{(\partial_{k_1}^2\phi_1)(\tilde{\gamma}(k_2), k_2)}{2}(k_1-\tilde{\gamma}(k_2))^2\Bigg\{1+\sum_{j=3}^\infty\frac{2(\partial_{k_1}^j\phi_1)(\tilde{\gamma}(k_2), k_2)}{j!(\partial_{k_1}^2\phi_1)(\tilde{\gamma}(k_2), k_2)}(k_1-\tilde{\gamma}(k_2))^{j-2}\Bigg\}
\end{aligned}$$
but still denote $\tilde{k}_1$ by $k_1$. Then, coming back to the integral \eqref{eq: I1 integral}, it follows that   
$$\mathbb{I}_1(t;\mathbf{v})=\int_{-\infty}^\infty\int_{-\infty}^\infty e^{-it\tilde{\phi}_1(\mathbf{k};\mathbf{v})}\tilde{\chi}_\delta(\mathbf{k})d\mathbf{k},$$
where $\tilde{\chi}_\delta(\mathbf{k})$ is a smooth function supported in $\{\mathbf{k}: |k_1|\lesssim\delta\textup{ and }|k_2|\lesssim\delta^{\frac{2}{3}}\}$ and
$$\tilde{\phi}_1(\mathbf{k};\mathbf{v})=\phi_1(\tilde{\gamma}(k_2), k_2)-v_1 \tilde{\gamma}(k_2)-v_2k_2+k_1^2.$$
Note that $\tilde{\phi}_1(\mathbf{k};\mathbf{v})=k_1^2+c_{0,3}k_2^3+g(k_2)$ for some analytic function $g(k_2)$ such that $|g^{(j)}(k_2)|\lesssim \delta_0$, because 
$$\begin{aligned}
\phi_1(\tilde{\gamma}(k_2), k_2)&=\tilde{\gamma}(k_2)^2+c_{0,3}k_2^3+\Bigg\{\sum_{m=0}^\infty c_{0, m+4}k_2^{m}\Bigg\}k_2^4\\
&\quad+\tilde{\gamma}(k_2)\sum_{m_1+m_2\geq 2}c_{m_1+1, m_2}\tilde{\gamma}(k_2)^{m_1}k_2^{m_2}.
\end{aligned}$$
and $\tilde{\gamma}$ is a small analytic function (see \eqref{eq: gamma tilde derivatives for no intersection}).

For the integral, we decompose 
$$\mathbb{I}_1(t;\mathbf{v})=\sum_{j=0,1}\int_{-\infty}^\infty\int_{-\infty}^\infty e^{-it\tilde{\phi}_1(\mathbf{k};\mathbf{v})}\tilde{\chi}_\delta(\mathbf{k})\chi_j\bigg(\frac{k_1}{|t|^{-1/2}}\bigg)dk_2 dk_1=:\sum_{j=0,1}\mathbb{I}_{1;(j)}(t;\mathbf{v}),$$
where $\chi_0$ and $\chi_1$ are the smooth cut-off given in Section \ref{subsec: notations}. For $\mathbb{I}_{1;(0)}(t;\mathbf{v})$, we apply the van der Corput lemma \cite{Stein} with $|\partial_{k_2}^3\tilde{\phi}_1(\mathbf{k};\mathbf{v})|\sim1$ in the inner $k_2$-variable integral, and we obtain $|\mathbb{I}_{1;(0)}(t;\mathbf{v})|\lesssim |t|^{-\frac{5}{6}}$. On the other hand, for $\mathbb{I}_{1;(1)}(t;\mathbf{v})$, by integration by parts with $\partial_{k_1}\tilde{\phi}_1(\mathbf{k};\mathbf{v})=2k_1$, we write 
$$\begin{aligned}
\mathbb{I}_{1;(1)}(t;\mathbf{v})&=\frac{1}{it}\int_{-\infty}^\infty\bigg[\int_{-\infty}^\infty e^{-it\tilde{\phi}_1(\mathbf{k};\mathbf{v})}\nabla_{k_1}\bigg\{\frac{1}{\partial_{k_1}\tilde{\phi}_1(\mathbf{k};\mathbf{v})}\tilde{\chi}_\delta(\mathbf{k})\chi_1\bigg(\frac{k_1}{|t|^{-1/2}}\bigg)\bigg\} dk_1\bigg]dk_2\\
&=\frac{1}{it}\int_{-\infty}^\infty\bigg\{\int_{-\infty}^\infty e^{-it\tilde{\phi}_1(\mathbf{k};\mathbf{v})}\tilde{\chi}_\delta(\mathbf{k}) dk_2\bigg\}\nabla_{k_1}\bigg\{\frac{1}{2k_1}\chi_1\bigg(\frac{k_1}{|t|^{-1/2}}\bigg)\bigg\} dk_1\\
&\quad+\frac{1}{it}\int_{-\infty}^\infty\bigg\{\int_{-\infty}^\infty e^{-it\tilde{\phi}_1(\mathbf{k};\mathbf{v})}\nabla_{k_1}\tilde{\chi}_\delta(\mathbf{k}) dk_2\bigg\}\frac{1}{2k_1}\chi_1\bigg(\frac{k_1}{|t|^{-1/2}}\bigg)dk_1.
\end{aligned}$$
Then, applying the van der Corput lemma \cite{Stein} in the $k_2$-variable integral, we prove that $|\mathbb{I}_{1;(1)}(t;\mathbf{v})|\lesssim |t|^{-\frac{5}{6}}$, since $|k_1|\gtrsim |t|^{-1/2}$ in the domain. Therefore, we complete the proof of Proposition \ref{prop: no intersection}.

\section{Nonlinear applications: Proof of Theorem \ref{Thm:nonlinear application}}\label{sec: nonlinear application}

\subsection{Global well-posedness of the nonlinear model}

As a preliminary, we establish the basic global well-posedness of the discrete NLS \eqref{NLS}. 

\begin{proposition}[Global well-posedness of the discrete NLS]\label{prop: GWP of DNLS}
Let $p>1$. For any $\mathbf{u}_0\in L_{\mathbf{x}}^2(\mathbf{\Lambda}; \mathbb{C}^2)$, there exists a unique global solution $\mathbf{u}(t)\in C_t(\mathbb{R}; L_{\mathbf{x}}^2(\mathbf{\Lambda}; \mathbb{C}^2))$ to the NLS \eqref{NLS} with initial data $\mathbf{u}_0$. Moreover, it obeys the mass conservation law, i.e., for all $t\in\mathbb{R}$, 
\begin{equation}\label{eq: conservation law}
  \|\mathbf{u}(t)\|_{L_{\mathbf{x}}^2(\mathbf{\Lambda}; \mathbb{C}^2)}^2 = \|\mathbf{u}_0\|_{L_{\mathbf{x}}^2(\mathbf{\Lambda}; \mathbb{C}^2)}^2.
\end{equation}  
\end{proposition}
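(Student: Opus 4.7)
The plan is to set up the problem as an ODE in the Banach space $L_{\mathbf{x}}^2(\mathbf{\Lambda};\mathbb{C}^2)$, run a standard Picard iteration for local existence, then upgrade to global existence via the conservation of mass. The two key preliminary observations are: $(a)$ the discrete Laplacian $\mathbf{\Delta}$ defined in \eqref{eq: definition of Laplacian on honeycomb} is a bounded self-adjoint operator on $L_{\mathbf{x}}^2(\mathbf{\Lambda};\mathbb{C}^2)$ (indeed on every $L_{\mathbf{x}}^r$ for $1\leq r\leq\infty$), since it is a finite linear combination of lattice shifts and the identity; and $(b)$ the nesting of discrete Lebesgue spaces gives $\|\mathbf{u}\|_{L_{\mathbf{x}}^\infty}\lesssim\|\mathbf{u}\|_{L_{\mathbf{x}}^2}$, which controls the power-type nonlinearity componentwise.

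First I would prove local well-posedness via the Duhamel formulation
\[
\mathbf{u}(t)=e^{it\mathbf{\Delta}}\mathbf{u}_0-i\int_0^t e^{i(t-s)\mathbf{\Delta}}\mathcal{N}(\mathbf{u}(s))\,ds.
\]
The map $\mathbf{u}\mapsto\mathcal{N}(\mathbf{u})$ is locally Lipschitz on $L_{\mathbf{x}}^2$: for $\mathbf{u},\mathbf{w}$ in a ball of radius $R$ in $L_{\mathbf{x}}^2$, using $||a|^{p-1}a-|b|^{p-1}b|\lesssim (|a|^{p-1}+|b|^{p-1})|a-b|$ pointwise together with observation $(b)$ yields
\[
\|\mathcal{N}(\mathbf{u})-\mathcal{N}(\mathbf{w})\|_{L_{\mathbf{x}}^2}\lesssim (\|\mathbf{u}\|_{L_{\mathbf{x}}^\infty}^{p-1}+\|\mathbf{w}\|_{L_{\mathbf{x}}^\infty}^{p-1})\|\mathbf{u}-\mathbf{w}\|_{L_{\mathbf{x}}^2}\lesssim R^{p-1}\|\mathbf{u}-\mathbf{w}\|_{L_{\mathbf{x}}^2}.
\]
Since $e^{it\mathbf{\Delta}}$ is a group of unitaries on $L_{\mathbf{x}}^2$ (by functional calculus, using that $-\mathbf{\Delta}$ is bounded self-adjoint), a contraction-mapping argument on $C_t([-T,T];L_{\mathbf{x}}^2)$ with $T=T(\|\mathbf{u}_0\|_{L_{\mathbf{x}}^2})>0$ produces a unique local solution.

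Next I would establish the mass conservation law \eqref{eq: conservation law}. Pairing the equation \eqref{NLS} with $\mathbf{u}$ in $L_{\mathbf{x}}^2(\mathbf{\Lambda};\mathbb{C}^2)$, we compute
\[
\frac{d}{dt}\|\mathbf{u}(t)\|_{L_{\mathbf{x}}^2}^2=2\,\mathrm{Re}\,\langle\partial_t\mathbf{u},\mathbf{u}\rangle=2\,\mathrm{Re}\big(i\langle\mathbf{\Delta}\mathbf{u},\mathbf{u}\rangle-i\langle\mathcal{N}(\mathbf{u}),\mathbf{u}\rangle\big).
\]
The operator $\mathbf{\Delta}$ is manifestly self-adjoint (a direct check using \eqref{eq: definition of Laplacian on honeycomb} shows that shifts by $\pm\mathbf{v}_j$ in the sum produce a symmetric bilinear form; alternatively, its Fourier symbol is Hermitian), so $\langle\mathbf{\Delta}\mathbf{u},\mathbf{u}\rangle\in\mathbb{R}$. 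For the nonlinearity, since $\mathcal{N}(\mathbf{u})\cdot\bar{\mathbf{u}}=\pm(|u_\bullet|^{p+1}+|u_\circ|^{p+1})$ is real-valued pointwise, $\langle\mathcal{N}(\mathbf{u}),\mathbf{u}\rangle\in\mathbb{R}$ as well. Thus both terms inside $\mathrm{Re}(i\cdot)$ are imaginary and the derivative vanishes, giving \eqref{eq: conservation law}.

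Finally, the a priori $L_{\mathbf{x}}^2$ bound from mass conservation means that the local existence time $T(\|\mathbf{u}(t)\|_{L_{\mathbf{x}}^2})=T(\|\mathbf{u}_0\|_{L_{\mathbf{x}}^2})$ is uniform in $t$, so the standard continuation argument extends the solution to all of $\mathbb{R}$. There is no genuine obstacle here: the discreteness of $\mathbf{\Lambda}$ makes the Laplacian bounded, which trivialises both the semigroup theory and the Lipschitz estimate for $\mathcal{N}$; the only point requiring any care is bookkeeping the factor $\frac{\sqrt{3}}{2}$ from the measure \eqref{eq: discrete Lebesgue norm}, but this is harmless as it appears symmetrically on both sides of every identity.
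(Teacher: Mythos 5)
Your proposal is correct and follows essentially the same route as the paper's sketch: Duhamel formulation, contraction mapping on $C([-T,T];L_{\mathbf{x}}^2)$ using the discrete nesting $L^{r_1}\hookrightarrow L^{r_2}$ to tame the nonlinearity, mass conservation from a direct $\tfrac{d}{dt}\|\mathbf{u}\|_{L^2}^2$ computation, and iteration to global time. The only cosmetic difference is that you use the $L^\infty\lesssim L^2$ embedding to get an explicit Lipschitz bound on $\mathcal{N}$, whereas the paper passes through $\|\mathcal{N}(\mathbf{u})\|_{L^2}=\|\mathbf{u}\|_{L^{2p}}^p\lesssim\|\mathbf{u}\|_{L^2}^p$ and asserts contractivity without writing the difference estimate; both rest on the same embedding inequality and yield the same result.
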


In general, for discrete models, well-posedness can be proved easily by the standard contraction mapping argument, because the trivial inequality\footnote{Note from the definition \ref{eq: discrete Lebesgue norm} that the $L_{\mathbf{x}}^{r_2}(\mathbf{\Lambda}; \mathbb{C}^2)$-norm is in essence an $\ell^r$-summation norm.}
\begin{align}\label{ineq:embedding}
\|\mathbf{u}\|_{L_{\mathbf{x}}^{r_2}(\mathbf{\Lambda}; \mathbb{C}^2)}\lesssim_{r_1, r_2}\|\mathbf{u}\|_{L_{\mathbf{x}}^{r_1}(\mathbf{\Lambda}; \mathbb{C}^2)},
\end{align}
with $1\le r_1\le r_2\le\infty$, makes the nonlinear term easier to deal with. Therefore, we only give a sketch of the proof. For more details, we refer to \cite[Proposition~9]{HY2019}. In the following, for convenience, we denote $L_{\mathbf{x}}^r=L_{\mathbf{x}}^r(\mathbf{\Lambda}; \mathbb{C}^2)$. 

\begin{proof}[Sketch of Proof]
By the Duhamel formula of the equation \eqref{NLS}, it is natural to consider the nonlinear map
$$\Phi( \mathbf{u} )(t)=e^{it\mathbf{\Delta}}\mathbf{u}_0-i \int_0^te^{i(t-s)\mathbf{\Delta}}  \mathcal{N}(\mathbf{u}(s))ds.$$
Indeed, one can show that there exists a small $T>0$, depending only on the size of initial data $\|\mathbf{u}_0\|_{L_{\mathbf{x}}^2}$, such that $\Phi(\mathbf{u})$ is contractive in $\{u: \|u\|_{C([-T,T]; L_{\mathbf{x}}^2}\leq 2\|\mathbf{u}_0\|_{L_{\mathbf{x}}^2}\}$, because by \eqref{ineq:embedding}, the nonlinear term can be estimated as 
$$\begin{aligned}
\bigg\|\int_0^te^{i(t-s)\mathbf{\Delta}}  \mathcal{N}(\mathbf{u}(s))ds\bigg\|_{C([-T,T]; L_{\mathbf{x}}^2)}&\leq \|\mathcal{N}(\mathbf{u})\|_{L^1([-T,T]; L_{\mathbf{x}}^2)}=\|\mathbf{u}\|_{L^p([-T,T]; L_{\mathbf{x}}^{2p})}^p\\
&\lesssim T\|\mathbf{u}\|_{C([-T,T]; L_{\mathbf{x}}^{2})}^p.
\end{aligned}$$
Subsequently, the equation \eqref{NLS} is locally well-posed in $C([-T,T]; L_{\mathbf{x}}^2)$. Moreover, for the solution $\mathbf{u}(t)\in C([-T,T]; L_{\mathbf{x}}^2)$, differentiating $\|\mathbf{u}(t)\|_{L_{\mathbf{x}}^2}^2$ and inserting the equation \eqref{NLS}, one can show the conservation law \eqref{eq: conservation law}. Then, iterating the local well-posedness procedure arbitrarily many times with $\|\mathbf{u}(kT)\|_{L_{\mathbf{x}}^2}^2=\|\mathbf{u}_0\|_{L_{\mathbf{x}}^2}^2$ for all $k\in\mathbb{N}$, we conclude that $\mathbf{u}(t)$ exists globally in time. 
\end{proof}

\subsection{Proof of Theorem~\ref{Thm:nonlinear application}}

Next, we will show that the solution $\mathbf{u}(t)$, constructed in Proposition \ref{prop: GWP of DNLS}, scatters in time. Here, by time-reversal symmetry, we only consider the positive time direction $t>0$.

For the proof, we employ the dispersion estimate 
\begin{equation}\label{ineq:dispersive estimates}
  \|e^{it\mathbf{\Delta}}\mathbf{u}_0\|_{ L_{\mathbf{x}}^r}\leq\frac{c_0}{(1+|t|)^{\frac43(\frac12-\frac1r)}}
  \|\mathbf{u}_0\|_{ L_{\mathbf{x}}^{r'}}
\end{equation}
for $2\leq r<\infty$, which follows from Corollary \ref{cor: dispersion estimate} dropping $\mathbf{O}(i\nabla_{\mathbf{x}})^*$ by Lemma \ref{lem:boundedness of O}. From now, we fix $p>3$ and assume that $2+\frac{6}{2p-3}<r$. For bootstrapping, we assume the a priori bound
\begin{equation}\label{eq: a priori bound for NLS}
  \sup_{t\in[0,T_{\max})}(1+|t|)^{\frac43(\frac12-\frac1r)} \|\mathbf{u}(t)\|_{ L_{\mathbf{x}}^{r}} \le 2c_0\|\mathbf{u}_0\|_{ L_{\mathbf{x}}^{r'}},
\end{equation}
where $T_{\max}\in(0,\infty]$ is the maximal time such that \eqref{eq: a priori bound for NLS} holds. Indeed, by \eqref{ineq:embedding} with $r_2=r\geq r_1=2$ and the conservation law \eqref{eq: conservation law}, the bound $(1+|t|)^{\frac43(\frac12-\frac1r)} \|\mathbf{u}(t)\|_{ L_{\mathbf{x}}^{r}} \le 2c_0\|\mathbf{u}_0\|_{ L_{\mathbf{x}}^{r'}}$ holds on $[0,T)$ at least for sufficiently small $T>0$.

For contradiction, we assume that $T_{\max}<\infty$. Then, applying the inequality \eqref{ineq:dispersive estimates} to the Duhamel representation
\begin{equation}\label{NLS duhamel}
\mathbf{u}(t)=e^{it\mathbf{\Delta}}\mathbf{u}_0-i\int_0^te^{i(t-s)\mathbf{\Delta}}  \mathcal{N}(\mathbf{u}(s))ds,
\end{equation}
we obtain that for all $0\leq t<T_{\max}$, 
\begin{equation}\label{eq: first estimate for nonlinear solution}
\begin{aligned}
  \|\mathbf{u}(t)\|_{ L_{\mathbf{x}}^{r}} 
  &\leq \frac{c_0\|\mathbf{u}_0\|_{ L_{\mathbf{x}}^{r'}}}{(1+|t|)^{\frac43(\frac12-\frac1r)}}
  +c_0\int_0^{T_{\max}} \frac{\|\mathcal{N}(\mathbf{u}(s))\|_{ L_{\mathbf{x}}^{r'}}}{(1+|t-s|)^{\frac43(\frac12-\frac1r)}} ds \\ 
  &\leq \frac{c_0\|\mathbf{u}_0\|_{ L_{\mathbf{x}}^{r'}}}{(1+|t|)^{\frac43(\frac12-\frac1r)}}
  +c_0'\int_0^{T_{\max}} \frac{\|\mathbf{u}(s)\|_{ L_{\mathbf{x}}^{r'p}}^p}{(1+|t-s|)^{\frac43(\frac12-\frac1r)}} ds.
\end{aligned}
\end{equation}
We claim that if the a priori bound \eqref{eq: a priori bound for NLS} holds on the interval $[0,T_{\max})$, then 
\begin{align}\label{ineq: nonlinear decay}
\|\mathbf{u}(s)\|_{L_{\mathbf{x}}^{r'p}}^p  
\lesssim_{r,p} \frac{1}{(1+|s|)^{\sigma_{r,p}}}\|\mathbf{u}_0\|_{ L_{\mathbf{x}}^{r'}}^p
\end{align}
for $s\in [0,T_{\max})$, where $\sigma_{r,p}$ is given by \eqref{eq: nonlinear decay rate}.\\
\textit{\underline{Case 1} ($p\ge r-1$)} In this case, we have $r'p\ge r$. Hence, by the embedding \eqref{ineq:embedding} and a priori bound \eqref{eq: a priori bound for NLS}, we have 
$$\|\mathbf{u}(s)\|_{ L_{\mathbf{x}}^{r'p}} \lesssim \|\mathbf{u}(s)\big\|_{ L_{\mathbf{x}}^{r}}
  \lesssim(1+|s|)^{-\frac43(\frac12-\frac1r)}\|\mathbf{u}_0\|_{ L_{\mathbf{x}}^{r'}}.$$  
Since $\sigma_{r,p}=\frac{4}{3}(\frac12-\frac1r)p$, \eqref{ineq: nonlinear decay} follows.\\
\textit{\underline{Case 2} ($p<r-1$)} Now, we have $r'p<r$ as well as $r'p>r'(\frac72-\frac2r)=r'(\frac{3}{2}+\frac{2}{r'})>2$ by the assumption on $p$ and $r$. Hence, using the interpolation inequality with $0<\theta<1$ such that $\frac{1}{r'p}=\frac{1-\theta}{2}+\frac{\theta}{r}$ ( $\Leftrightarrow\theta=\frac{\frac{1}{2}-\frac{1}{r'p}}{\frac{1}{2}-\frac{1}{r}}$), the mass conservation law and a priori bound \eqref{eq: a priori bound for NLS}, we obtain 
$$\|\mathbf{u}(s)\|_{ L_{\mathbf{x}}^{r'p}}^p\lesssim\|\mathbf{u}(s)\|_{ L_{\mathbf{x}}^{2}}^{(1-\theta)p}\|\mathbf{u}(s)\|_{ L_{\mathbf{x}}^{r}}^{\theta p}\lesssim\|\mathbf{u}_0\|_{ L_{\mathbf{x}}^{2}}^{(1-\theta)p}\Bigg(\frac{\|\mathbf{u}_0\|_{ L_{\mathbf{x}}^{r'}}}{(1+|s|)^{\frac43(\frac12-\frac1r)}}\Bigg)^{\theta p}.$$
Subsequently, \eqref{ineq: nonlinear decay} follows from the trivial inequality \eqref{ineq:embedding} for $\|\mathbf{u}_0\|_{ L_{\mathbf{x}}^{2}}$ with $r'<2$, because $\sigma_{r,p}=\frac{4}{3}(\frac{1}{2}-\frac{1}{r})\theta p=\frac{4}{3}(\frac{1}{2}-\frac{1}{r'p})p$. 

It is important to note that $\sigma_{r,p}>1$ in the inequality \eqref{ineq: nonlinear decay}. Indeed, for this, we assume $p>\max\{\frac{3}{2}+\frac{3}{r-2},\frac72-\frac2r\}$ in Theorem \ref{Thm:nonlinear application}. Hence, applying \eqref{ineq: nonlinear decay} to \eqref{eq: first estimate for nonlinear solution}, we obtain 
$$\begin{aligned}
\|\mathbf{u}(t)\|_{ L_{\mathbf{x}}^{r}}&\leq \frac{c_0\|\mathbf{u}_0\|_{ L_{\mathbf{x}}^{r'}}}{(1+|t|)^{\frac43(\frac12-\frac1r)}}+C\int_0^{T_{\max}} \frac{\|\mathbf{u}_0\|_{ L_{\mathbf{x}}^{r'}}^p}{(1+|t-s|)^{\frac43(\frac12-\frac1r)}(1+|s|)^{\sigma_{r,p}}} ds\\
&\leq \frac{c_0\|\mathbf{u}_0\|_{ L_{\mathbf{x}}^{r'}}}{(1+|t|)^{\frac43(\frac12-\frac1r)}}+\frac{C\|\mathbf{u}_0\|_{ L_{\mathbf{x}}^{r'}}^{p}}{(1+|t|)^{\frac43(\frac12-\frac1r)}}\leq \frac{\frac{3}{2}c_0\|\mathbf{u}_0\|_{ L_{\mathbf{x}}^{r'}}}{(1+|t|)^{\frac43(\frac12-\frac1r)}}
\end{aligned}$$
for $t\in[0,T_{\max})$, provided that $\|\mathbf{u}_0\|_{ L_{\mathbf{x}}^{r'}}$ is small enough. Hence, the above inequality improves the a priori bound \eqref{eq: a priori bound for NLS}. It deduces a contradiction to the maximality of $T_{\max}$. Therefore, we conclude that $T_{\max}=\infty$ in \eqref{eq: a priori bound for NLS} as well as \eqref{nonlinear time decay} and \eqref{ineq: nonlinear decay} hold for all $t>0$.

It remains to show the nonlinear scattering \eqref{eq: nonlinear scattering}. Indeed, for $t_2\ge t_1$, by Duhamel's formula \eqref{NLS duhamel} and \eqref{ineq: nonlinear decay}, we obtain that
$$\begin{aligned}
&\|e^{-it_2\mathbf{\Delta}}\mathbf{u}(t_2)-e^{-it_1\mathbf{\Delta}}\mathbf{u}(t_1)\|_{L_{\mathbf{x}}^{2}}\\
&\leq \int_{t_1}^{t_2} \|\mathcal{N}(\mathbf{u}(s))\|_{L_{\mathbf{x}}^{2}} ds\lesssim \int_{t_1}^{t_2} \|\mathcal{N}(\mathbf{u}(s))\|_{L_{\mathbf{x}}^{r'}} ds
= \int_{t_1}^{t_2} \|\mathbf{u}(s)\|_{L_{\mathbf{x}}^{r'p}}^p ds\lesssim\frac{\|\mathbf{u}_0\|_{ L_{\mathbf{x}}^{r'}}^p}{(1+|t_1|)^{\sigma_{r,p}-1}}.
\end{aligned}$$ 
Thus, we conclude that $e^{-it\mathbf{\Delta}}\mathbf{u}(t)$ has a limit $u_+$ in $L_{\mathbf{x}}^{2}$ as $t\to+\infty$, and 
$$\|e^{-it\mathbf{\Delta}}\mathbf{u}(t)-\mathbf{u}_+\|_{L_{\mathbf{x}}^{2}}\lesssim \frac{\|\mathbf{u}_0\|_{ L_{\mathbf{x}}^{r'}}^p}{(1+|t|)^{\sigma_{r,p}-1}}.$$


\appendix

\section{Factorization of the linear Schr\"odinger flow}\label{sec: kernel}

In this appendix, we derive the factorized representation \eqref{eq: flow factorization} of the linear Schr\"odinger flow $e^{it\mathbf{\Delta}}$. To this end, first, we observe that by the Fourier and the inverse Fourier transforms (see  Definition \ref{definition: Fourier transform}), the Laplacian $(-\mathbf{\Delta})$ is the Fourier multiplier with symbol 
$$\mathbf{P}(\mathbf{k})=4\begin{bmatrix}\ \ 3&-\overline{z(\mathbf{k})}\\
-z(\mathbf{k})&\ \ 3\end{bmatrix},$$
where $z=z(\mathbf{k})=1+e^{i\mathbf{k}\cdot\mathbf{v}_1}+e^{i\mathbf{k}\cdot\mathbf{v}_2}$ (see \eqref{eq: definition of Laplacian on honeycomb} for the definition of $\mathbf{\Delta}$). The multiplier is a hermitian matrix, and it can be diagonalized as
$$\mathbf{P}(\mathbf{k})=\mathbf{O}(\mathbf{k})\mathbf{D}(\mathbf{k})\mathbf{O}(\mathbf{k})^*,$$
where 
$$\mathbf{D}(\mathbf{k})=4\begin{bmatrix}3+|z(\mathbf{k})|&0\\
0&3-|z(\mathbf{k})|\end{bmatrix}\quad\textup{and}\quad\mathbf{O}(\mathbf{k})=\begin{bmatrix}\ \frac{1}{\sqrt{2}}&\frac{\overline{z(\mathbf{k})}}{\sqrt{2}|z(\mathbf{k})|}\\
-\frac{z(\mathbf{k})}{\sqrt{2}|z(\mathbf{k})|}&\frac{1}{\sqrt{2}}\end{bmatrix}.$$
Thus, by the Fourier transform, the equation \eqref{LS} with initial data $\mathbf{u}_0$ is equivalent to the equation 
$$i\partial_t\big(\mathbf{O}(\mathbf{k})^*\hat{\mathbf{u}}\big)=\mathbf{O}(\mathbf{k})^*i\partial_t\hat{\mathbf{u}}=\mathbf{O}(\mathbf{k})^*\mathbf{P}(\mathbf{k})\hat{\mathbf{u}}=\mathbf{D}(\mathbf{k})\big(\mathbf{O}(\mathbf{k})^*\hat{\mathbf{u}}\big)$$
with initial data $\hat{\mathbf{u}}_0$, or
$$\big(\mathbf{O}(\mathbf{k})^*\hat{\mathbf{u}}\big)(t)=\begin{bmatrix}e^{-4it(3+|z(\mathbf{k})|)}&0\\
0&e^{-4it(3-|z(\mathbf{k})|)}\end{bmatrix}\mathbf{O}(\mathbf{k})^*\hat{\mathbf{u}}_0.$$
Then, taking the inverse Fourier transform, we obtain
$$e^{it\mathbf{\Delta}}\mathbf{u}_0=\mathbf{u}(t)=e^{-12it}\mathbf{O}(i\nabla_{\mathbf{x}})\begin{bmatrix} e^{-4it\varphi(i\nabla_{\mathbf{x}})}&0\\
0&e^{4it\varphi(i\nabla_{\mathbf{x}})}\end{bmatrix}\mathbf{O}(i\nabla_{\mathbf{x}})^*\mathbf{u}_0.$$

\section{Boundedness of the operator \texorpdfstring{$\mathbf{O}(i\nabla_{\mathbf{x}})$}{}}\label{sec: boundedness of O operator}

The operator $\mathbf{O}(i\nabla_{\mathbf{x}})$ and its adjoint $\mathbf{O}(i\nabla_{\mathbf{x}})^*$ appear naturally when the Schr\"odinger flow $e^{it\mathbf{\Delta}}$ is diagonalized (see \eqref{eq: flow factorization}). The following lemma asserts that they are bounded on $L_{\mathbf{x}}^p(\mathbf{\Lambda}; \mathbb{C}^2)$.

\begin{lemma}[Boundedness of $\mathbf{O}(i\nabla_{\mathbf{x}})$]\label{lem:boundedness of O}
For $1<p<\infty$, we have
\begin{equation}\label{boundedness of O}
\| \mathbf{O}(i\nabla_{\mathbf{x}}) \mathbf{u}\|_{ L_{\mathbf{x}}^{p}(\mathbf{\Lambda}; \mathbb{C}^2)}, \| \mathbf{O}(i\nabla_{\mathbf{x}})^* \mathbf{u}\|_{ L_{\mathbf{x}}^{p}(\mathbf{\Lambda}; \mathbb{C}^2)} \lesssim \|  \mathbf{u}\|_{ L_{\mathbf{x}}^{p}(\mathbf{\Lambda}; \mathbb{C}^2)}.
\end{equation}
\end{lemma}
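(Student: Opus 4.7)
Since $\mathbf{O}(\mathbf{k})$ has entries that are either the constant $1/\sqrt{2}$ or $\pm z(\mathbf{k})/(\sqrt{2}\,|z(\mathbf{k})|)$ and its complex conjugate, and by duality, it suffices to prove that the scalar Fourier multiplier $T_m$ with symbol $m(\mathbf{k}):=z(\mathbf{k})/|z(\mathbf{k})|$ is bounded on $L^p(\mathbf{\Lambda};\C)$ for $1<p<\infty$. The symbol $m$ is unimodular and smooth on the torus $\R^2/\mathbf{\Lambda}^*$ except at the two Dirac points $\mathbf{K}_\star^\pm=(0,\pm 4\pi/3)$, where $z$ vanishes to first order and $m$ has a conical (Riesz-transform-type) discontinuity.

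The plan is to split $m=\chi_{\textup{reg}}\,m+\sum_{\pm}\chi_{\mathbf{K}_\star^\pm}\,m$ by a smooth partition of unity adapted to the two Dirac points, where $\chi_{\textup{reg}}$ vanishes in a neighborhood of each Dirac point. The first piece $\chi_{\textup{reg}}\,m$ is smooth on the whole torus, so its inverse Fourier transform on $\mathbf{\Lambda}$ decays faster than any polynomial and lies in $\ell^1(\mathbf{\Lambda})$; by Young's convolution inequality, the associated convolution operator is bounded on every $L^p(\mathbf{\Lambda};\C)$ for $1\le p\le\infty$. It remains to bound the contributions localized near each Dirac point.

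Near a Dirac point $\mathbf{K}\in\{\mathbf{K}_\star^\pm\}$, a direct calculation gives $z(\mathbf{K}+\mathbf{k})=L(\mathbf{k})+r(\mathbf{k})$, where $L(\mathbf{k}):=\nabla z(\mathbf{K})\cdot\mathbf{k}$ is a non-vanishing complex-linear form (so that $|L(\mathbf{k})|^2$ recovers the leading quadratic behaviour of $\varphi^2=|z|^2$ observed in Lemma \ref{lemma: phase function asymptotic near the Dirac point}) and $r(\mathbf{k})=\mathcal{O}_2(\mathbf{k})$. We then factor
\begin{equation*}
m(\mathbf{K}+\mathbf{k})=\frac{L(\mathbf{k})}{|L(\mathbf{k})|}\cdot g(\mathbf{k}),\qquad g(\mathbf{k}):=\frac{z(\mathbf{K}+\mathbf{k})/L(\mathbf{k})}{|z(\mathbf{K}+\mathbf{k})/L(\mathbf{k})|}.
\end{equation*}
Since $z(\mathbf{K}+\mathbf{k})/L(\mathbf{k})=1+O(|\mathbf{k}|)$ near $\mathbf{k}=0$, the function $g$ extends to a smooth, unimodular function in a neighborhood of $\mathbf{K}$. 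After a real-linear change of variables normalizing the leading quadratic form of $|z|^2$ to $|\mathbf{k}|^2$, the symbol $L/|L|$ becomes a complex linear combination of $(k_j/|\mathbf{k}|)_{j=1,2}$, i.e., a linear combination of Riesz transforms on $\R^2$, classically bounded on $L^p(\R^2)$ for $1<p<\infty$. Composing with a smooth compactly supported cut-off symbol extending $\chi_{\mathbf{K}}(\mathbf{K}+\cdot)\,g$ to $\R^2$ yields an $L^p(\R^2)$-bounded multiplier which, after $\mathbf{\Lambda}^*$-periodization, transfers to an $L^p(\mathbf{\Lambda};\C)$-bounded multiplier by de Leeuw's theorem, producing the desired piece.

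The main obstacle is the non-smoothness of $m$ at the Dirac points, which rules out any direct application of a smooth multiplier theorem. What unlocks the proof is the explicit algebraic factorization $m=(L/|L|)\cdot g$ into a Riesz-type principal part and a smooth unimodular correction; once this factorization is in place, the classical $L^p$-boundedness of the Riesz transforms together with de Leeuw's transference between continuous and discrete multipliers completes the argument, and the adjoint bound follows by duality.
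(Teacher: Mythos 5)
Your overall strategy --- reduce to the scalar multiplier $m(\mathbf{k}) = z(\mathbf{k})/|z(\mathbf{k})|$, split off the smooth piece away from the Dirac points, and treat the localized piece near each Dirac point by comparing with a Riesz-transform-type multiplier --- is reasonable, and the reduction to the scalar multiplier and the treatment of the smooth piece are fine. However, the key step near the Dirac points has a genuine gap: the claim that the correction factor $g(\mathbf{k}) = \frac{z(\mathbf{K}+\mathbf{k})/L(\mathbf{k})}{|z(\mathbf{K}+\mathbf{k})/L(\mathbf{k})|}$ "extends to a smooth, unimodular function" is false. The issue is that $z(\mathbf{K}+\mathbf{k})/L(\mathbf{k}) = 1 + r(\mathbf{k})/L(\mathbf{k})$ with $r(\mathbf{k}) = \mathcal{O}_2(\mathbf{k})$, and $r/L$ is smooth only if $r$ is divisible by the complex-linear form $L$ as a power series. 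A direct computation in the coordinates $\tilde{k}_j = \mathbf{k}\cdot\mathbf{v}_j$ shows $L = i(\omega\tilde{k}_1 + \bar\omega\tilde{k}_2)$ and the quadratic part of $r$ is $-\tfrac12(\omega\tilde{k}_1^2 + \bar\omega\tilde{k}_2^2)$, where $\omega = e^{2\pi i/3}$; polynomial division leaves a nonzero remainder $\bar\omega\tilde{k}_2(\tilde{k}_2 - \tilde{k}_1)$, which is not a multiple of $L$. Thus $r/L$ is of size $O(|\mathbf{k}|)$ but its first derivatives have a jump at the origin (as in the model $k_1^2/(k_2+ik_1)$), so $g$ is bounded and Hölder continuous but not $C^1$ at the Dirac point. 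Consequently the "Riesz transform $\times$ smooth compactly supported symbol" step breaks: the second factor is not smooth, so you cannot conclude an $\ell^1$ kernel by smoothness and Young's inequality, nor is the composite symbol automatically an $L^p(\R^2)$ multiplier without further argument.

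What does hold is the weaker Mikhlin-type bound $|\partial_{\mathbf{k}}^\alpha g(\mathbf{k})| \lesssim |\mathbf{k}|^{1-|\alpha|}$ for $|\alpha|\geq 1$, inherited from $r/L = O(|\mathbf{k}|)$ with derivative losses; combining this with the homogeneity of $L/|L|$ shows the whole symbol $m$ near a Dirac point satisfies $|\partial_{\mathbf{k}}^\alpha m(\mathbf{K}+\mathbf{k})| \lesssim |\mathbf{k}|^{-|\alpha|}$, which is exactly the hypothesis of a Hörmander--Mikhlin theorem. Once one observes this, there is no reason to peel off the Riesz part separately: one may apply a discrete Hörmander--Mikhlin theorem directly to the full symbol, which is in fact what the paper does (it changes variables from $\mathbf{\Lambda}$ to $\Z^2$ and invokes \cite[Theorem~4.1]{HY2019} after verifying the Mikhlin bound at the Dirac point). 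So the remedy for your gap essentially collapses your route into the paper's.
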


\begin{remark}
Our proof does not include the endpoint cases $p=1$ and $p=\infty$.
\end{remark}

\begin{proof}
The strategy is to utilize the Hörmander-Mikhlin theorem for functions in the square lattice domain $\mathbb{Z}^2$ from \cite[Theorem~4.1]{HY2019}. To achieve this, we convert the operator on the lattice $\mathbf{\Lambda}$ into that on the square lattice $\mathbb{Z}^2$ by a simple change of variables.

By duality, we may prove the lemma only for $\mathbf{O}(i\nabla_{\mathbf{x}})$. By the Fourier transform (see Definition \ref{definition: Fourier transform}) and changing the variable by translation, we write 
\begin{align*}
  \mathbf{O}(i\nabla_{\mathbf{x}})  \mathbf{u}(\mathbf{x})
  &= \sum_{\mathbf{y}\in\mathbf{\Lambda}} \bigg\{\frac{\sqrt{3}}{8\pi^2}\int_{\mathbb{R}^2/\mathbf{\Lambda}^*} e^{i(\mathbf{K}_\star+\mathbf{k})\cdot(\mathbf{x}-\mathbf{y})}\mathbf{O} (\mathbf{K}_\star+\mathbf{k})  d\mathbf{k}\bigg\}\mathbf{u}(\mathbf{y}).
\end{align*}
Then, changing the variables by $\mathbf{k}=\mathbf{A}\mathbf{\tilde{k}}$ for $\mathbf{\tilde{k}}=\tilde{k}_1\mathbf{e}_1+\tilde{k}_2\mathbf{e}_2\in \mathbb{R}^2/(2\pi\mathbb{Z}^2)$, where 
$\mathbf{A}= [\mathbf{k}_1 \ \mathbf{k}_2]$ is a $2\times2$ matrix with $\mathbf{k}_1=[\frac{2\pi}{\sqrt{3}} \ 2\pi]^{\textup{T}}$ and $\mathbf{k}_2=[\frac{2\pi}{\sqrt{3}} \ -2\pi]^{\textup{T}}$, we obtain 
\begin{align*}
  \big(\mathbf{O}(i\nabla_{\mathbf{x}})\mathbf{u}\big)(\mathbf{x})&=e^{i\mathbf{K}_\star\cdot\mathbf{x}}  \sum_{\mathbf{y}\in\mathbf{\Lambda}} \bigg\{ \frac{1}{(2\pi)^2} \int_{\mathbb{R}^2/(2\pi\mathbb{Z}^2)} e^{i(\mathbf{A}\mathbf{\tilde{k}})\cdot(\mathbf{x}-\mathbf{y})} \mathbf{O} (\mathbf{K}_{\star}+\mathbf{A}\mathbf{\tilde{k}})d\mathbf{\tilde{k}} \bigg\} e^{-i\mathbf{K}_\star\cdot\mathbf{y}} \mathbf{u}(\mathbf{y}).
\end{align*}
Next, we introduce the change-of-variable operator $T_{\mathbf{V}}$ defined by 
$$(T_{\mathbf{V}}\mathbf{u})(\mathbf{\tilde{x}}):=e^{-i\mathbf{K}_\star\cdot(\mathbf{V}\mathbf{\tilde{x}})}\mathbf{u}(\mathbf{V}\mathbf{\tilde{x}}),\quad \mathbf{\tilde{x}}=[\tilde{x}_1 \  \tilde{x}_2]^\textup{T}\in\mathbb{Z}^2,$$
where $\mathbf{V}= [\mathbf{v}_1 \ \mathbf{v}_2]$, $\mathbf{v}_1=[\frac{\sqrt{3}}{2} \ \frac{1}{2}]^{\textup{T}}$ and $\mathbf{v}_2=[\frac{\sqrt{3}}{2} \ -\frac{1}{2}]^{\textup{T}}$. Then, replacing  by $\mathbf{x}=\mathbf{V}\mathbf{\tilde{x}}$ and $\mathbf{y}=\mathbf{V}\mathbf{\tilde{y}}$, we write 
\begin{equation}\label{eq: reformulation of O}
  \big(T_{\mathbf{V}}\mathbf{O}(i\nabla_{\mathbf{x}})\mathbf{u}\big)(\mathbf{\tilde{x}})=\sum_{\mathbf{\tilde{y}}\in\mathbb{Z}^2} \bigg\{\frac{1}{(2\pi)^2}\int_{\mathbb{R}^2/(2\pi\mathbb{Z}^2)} e^{i\mathbf{\tilde{k}}\cdot(\mathbf{\tilde{x}}-\mathbf{\tilde{y}})}\mathbf{O} (\mathbf{K}_{\star}+\mathbf{A}\mathbf{\tilde{k}})  d\mathbf{\tilde{k}} \bigg\}(T_\mathbf{V}\mathbf{u})(\mathbf{\tilde{y}}),
\end{equation}
where $(\mathbf{A}^\textup{T})\mathbf{V}=\mathbf{I}$ is used to obtain $e^{i\mathbf{\tilde{k}}\cdot(\mathbf{\tilde{x}}-\mathbf{\tilde{y}})}$ from $e^{i(\mathbf{A}\mathbf{\tilde{k}})\cdot(\mathbf{x}-\mathbf{y})}$.

Now, we recall that on the square lattice $\mathbb{Z}^2$, the Fourier transform $\tilde{\mathcal{F}}$ (resp., its inversion $\tilde{\mathcal{F}}^{-1}$) is given by 
$$(\tilde{\mathcal{F}}\mathbf{u})(\mathbf{\tilde{k}}):=\sum_{\mathbf{\tilde{k}}\in\mathbb{Z}^2}\mathbf{u}(\mathbf{\tilde{x}})e^{-i\mathbf{\tilde{k}}\cdot\mathbf{\tilde{x}}}\quad\bigg(\textup{resp., }(\tilde{\mathcal{F}}^{-1}\mathbf{u})(\mathbf{\tilde{x}}):=\frac{1}{(2\pi)^2}\int_{\mathbb{R}^2/(2\pi\mathbb{Z}^2)} \mathbf{u}(\mathbf{\tilde{k}})e^{i\mathbf{\tilde{k}}\cdot\mathbf{\tilde{x}}}d\mathbf{\tilde{k}}\bigg),$$
and introduce the $\mathbb{Z}^2$-Fourier multiplier $\widetilde{\mathbf{O}}(i\nabla_{\mathbf{\tilde{x}}})$ by 
$$\big(\tilde{\mathcal{F}}\widetilde{\mathbf{O}}(i\nabla_{\mathbf{\tilde{x}}})\mathbf{u}\big)(\mathbf{\tilde{k}})=\mathbf{O} (\mathbf{K}_{\star}+\mathbf{A}\mathbf{\tilde{k}})(\tilde{\mathcal{F}}\mathbf{u})(\mathbf{\tilde{k}}).$$
Then, \eqref{eq: reformulation of O} can be written as 
$$\mathbf{O}(i\nabla_{\mathbf{x}})\mathbf{u}=(T_{\mathbf{V}})^{-1}\widetilde{\mathbf{O}}(i\nabla_{\mathbf{\tilde{x}}})T_\mathbf{V}\mathbf{u}.$$
By the definition, $T_{\mathbf{V}}$ is an isometric isomorphism from $L_{\mathbf{x}}^p(\mathbf{\Lambda};\mathbb{C}^2)$ to $L_{\mathbf{\tilde{x}}}^p(\mathbb{Z}^2;\mathbb{C}^2)$. Hence, it follows that
$$\|\mathbf{O}(i\nabla_{\mathbf{x}})\|_{L_{\mathbf{x}}^p(\mathbf{\Lambda};\mathbb{C}^2)\to L_{\mathbf{x}}^p(\mathbf{\Lambda};\mathbb{C}^2)}=\|\widetilde{\mathbf{O}}(i\nabla_{\mathbf{\tilde{x}}})\|_{L_{\mathbf{\tilde{x}}}^p(\mathbb{Z}^2;\mathbb{C}^2)\to L_{\mathbf{\tilde{x}}}^p(\mathbb{Z}^2;\mathbb{C}^2)},$$
On the other hand, by a direct computation, we observe that for any multi-index $\alpha$,
\begin{align*}
  \big| \nabla_{\mathbf{\tilde{k}}}^{\alpha}\mathbf{O} (\mathbf{K}_{\star}+\mathbf{A}\mathbf{\tilde{k}})\big| \lesssim |\mathbf{\tilde{k}}|^{-|\alpha|} \;\text { for all } \; \mathbf{\tilde{k}}\in\mathbb{R}^2/(2\pi\mathbb{Z}^2).
 \end{align*}
Therefore, the Hörmander-Mikhlin theorem \cite[Theorem~4.1]{HY2019} implies that $\widetilde{\mathbf{O}}(i\nabla_{\mathbf{\tilde{k}}})$ is bounded on $L_{\mathbf{\tilde{x}}}^p(\mathbb{Z}^2;\mathbb{C}^2)$, which completes the proof.
\end{proof}

\end{document}